\newtheorem{thm}{Theorem}[section]
\newtheorem{lem}[thm]{Lemma}
\newtheorem{cor}[thm]{Corollary}
\newtheorem{prop}[thm]{Proposition}
\newtheorem{conj}[thm]{Conjecture}
\newtheorem{defn}[thm]{Definition}
\numberwithin{equation}{section}
\def\Pb{\ifmmode{\Bbb P}\else{$\Bbb P$}\fi}
\def\Z{\ifmmode{\Bbb Z}\else{$\Bbb Z$}\fi}
\def\C{\ifmmode{\Bbb C}\else{$\Bbb C$}\fi}
\def\R{\ifmmode{\Bbb R}\else{$\Bbb R$}\fi}
\def\S{\ifmmode{S^2}\else{$S^2$}\fi}
\def\supp{\operatorname{supp}}
\def\Ric{\operatorname{Ric}}
\def\exp{\operatorname{exp}}
\def\S{\cal S}
\newcommand{\eps}{\varepsilon}
\newcommand{\cH}{\mathcal{H}}
\newcommand{\cM}{\mathcal{M}}
\newcommand{\cN}{\mathcal{N}}
\newcommand{\pr}{\partial}
\DeclareMathOperator{\area}{Area}
\begin{document}

\subjclass[2000]{53E10, 53A10}

\title[Fate of flow]{On the long-time limit of the mean curvature flow in closed manifolds} 
\begin{abstract} In this article, we show that generally almost regular flows, introduced by Bamler and Kleiner, in closed $3$--manifolds will either go extinct in finite time or flow to a collection of smooth embedded minimal surfaces, possibly with multiplicity. Using a perturbative argument then we construct piecewise almost regular flows which either go extinct in finite time or flow to a stable minimal surface, possibly with multiplicity. We apply these results to construct minimal surfaces in $3$--manifolds in a variety of circumstances, mainly novel from the point of the view that the arguments are via parabolic methods. \end{abstract}
\author{Alexander Mramor and Ao Sun}
\address{Department of Mathematics, University of Copenhagen, Universitetsparken 5, DK-2100 Copenhagen Ø, Denmark 2300
\newline
\newline
\indent Department of Mathematics, Lehigh University, Chandler--Ullmann Hall, Bethlehem, PA 18015
\newline}
\email{almr@math.ku.dk, aos223@lehigh.edu}
\maketitle

\section{Introduction}

The mean curvature flow, as the gradient flow of the area functional, is a natural tool to find minimal surfaces -- in particular area minimizing and stable ones. The $1$-dimensional mean curvature flow in surfaces, known as the curve shortening flow, has been proven to be a very powerful tool for constructing geodesics. An especially important accomplishment was by Grayson \cite{Grayson89_CSF}, who used the curve shortening flow to settle Lusternik and Schnirelman's proposal to construct $3$ distinct closed embedded geodesics in $(S^2,g)$.

A central obstruction to using $n$--dimensional mean curvature flow in $(n+1)$--dimensional ambient manifold to construct minimal surfaces is the more complicated nature of singularity formation in dimensions $n \geq 2$. Therefore, weak notions of the flow are necessary. There are a number of reasonable definitions of weak flows to handle this issue; these are discussed in section \ref{prelim}. Very recently, Bamler and Kleiner \cite{BamlerKleiner23_Multiplicity1} introduced the notion of almost regular flow, which are well-behaved Brakke flows whose so--called local scale functions satisfy a certain integrability condition. They proved that in $\R^3$ all generic flows (i.e., flows which only develop mean convex singularities) and hence, by an approximation argument, inner and outermost flows emanating from an embedded closed surface are almost regular. In this framework, they then proved the Multiplicity One Conjecture of Ilmanen \cite{Ilmanen95_Sing2D}. In this article, we use the notions of generic and almost regular flows in general $3$--manifolds and first show the following:
\begin{thm}\label{longterm}\label{smoothfate} Let $M$ be a $2$-sided properly embedded surface in a closed Riemannian $3$--manifold $(N,g)$, and let $M_t$ be an almost regular flow from $M$. Then either $M_t$ goes extinct in finite time or there exists a sequence $t_i \to \infty$ for which $M_{t_i}$ converges to $m_1\Sigma_1+\cdots+m_k\Sigma_k$ in the sense of varifolds, where each $\Sigma_j$ is a smoothly embedded minimal surface and each $m_j$ is a positive integer. Moreover, all the $\Sigma_j$'s are disjoint.
\end{thm}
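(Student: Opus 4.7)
The plan is to exploit the gradient-flow structure of mean curvature flow together with the structural properties of almost regular flows. Since $N$ is closed and $M$ is embedded, $\area(M_0) < \infty$, and Brakke's inequality gives that $\mass(M_t)$ is non-increasing in $t$. Assume $M_t$ does not become extinct. Then $\mass(M_t)$ is a bounded non-increasing function, hence tends to some $A_\infty \geq 0$. Integrating the sharp Brakke inequality
\[
\mass(M_{t_2})-\mass(M_{t_1}) \leq -\int_{t_1}^{t_2}\!\int_{M_t}|H|^2\,d\mu_t\,dt
\]
over $[0,\infty)$ yields $\int_0^\infty \!\int_{M_t} |H|^2\, d\mu_t\,dt < \infty$, so there is a sequence $t_i \to \infty$ along which $\int_{M_{t_i}}|H|^2 \to 0$. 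Because almost regular flows are smooth, embedded, and multiplicity one on a set of full measure in spacetime, one may further arrange each $t_i$ to be a ``good'' time at which $M_{t_i}$ is smoothly embedded away from a controllably small singular set.

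The uniform area bound, together with Brakke--Allard varifold compactness, produces a subsequential limit $M_{t_i} \to V$ as integer rectifiable $2$-varifolds on $N$. Since $\int|H|^2 \to 0$, the generalized mean curvature of $V$ vanishes, so $V$ is a stationary integral $2$-varifold; a uniform lower mass bound coming from non-extinction and the local monotonicity formula ensures $V \neq 0$.

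To upgrade $V$ to a finite sum of smooth, disjoint, embedded minimal surfaces with integer multiplicities, I would combine two regularity inputs. At density-one points of $V$, Allard's $\eps$-regularity theorem yields directly smooth $C^{1,\alpha}$ convergence of the $M_{t_i}$. At higher-density points, the multiplicity-one embedded structure of the $M_{t_i}$ combined with the curvature estimate for surfaces with small $\int_{M_{t_i}}|H|^2$ (and locally bounded $\int|A|^2$ coming from the almost regular structure) gives smooth sheeted convergence with locally constant integer multiplicity. Labeling the connected components of $\spt V$ as $\Si_1,\dots,\Si_k$ and reading off the constant density on each as $m_j \in \mathbb{Z}_+$ then gives $V = \sum_{j=1}^k m_j \Si_j$. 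Finally, two distinct components $\Si_i,\Si_j$ cannot meet: at any contact point they would be tangent (being leaves of the lamination obtained from the varifold limit of embedded surfaces), and the strong maximum principle for minimal hypersurfaces would then force them to coincide, contradicting that they are distinct components.

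The main obstacle is the regularity step. A stationary integral $2$-varifold in a $3$-manifold need not be smooth; what rescues the argument is the almost regular structure itself—specifically, Bamler--Kleiner's multiplicity-one conclusion and the integrability of the local scale function—which must be carefully propagated to the limit to rule out singular structures such as triple junctions in $\spt V$ and to confirm that the multiplicities $m_j$ are genuine integers. Performing this transfer of quantitative estimates from the approximating slices $M_{t_i}$ to the varifold limit $V$ is where the bulk of the technical work will lie.
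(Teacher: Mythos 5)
Your opening steps coincide with the paper's: Brakke's inequality and the finite initial area give a sequence $t_i\to\infty$ with $\int_{M_{t_i}}|H|^2\to 0$, non-extinction (via clearing out) gives a lower mass bound, and Allard compactness produces a nonzero stationary integral varifold limit $V$. The genuine gap is the regularity step, which you yourself flag as where ``the bulk of the technical work will lie'' but do not carry out --- and the route you sketch points in the wrong direction. You propose to handle higher-density points by propagating Bamler--Kleiner's multiplicity-one conclusion and the scale-function integrability to the limit; but their multiplicity-one theorem concerns tangent flows at finite-time singularities, not long-time limits, and the statement you are proving explicitly allows $m_j\geq 2$ (such examples exist, cf.\ the reference to Chen--Sun in the paper), so multiplicity one cannot be the mechanism that rules out singular support of $V$. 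Likewise, your assertion that locally bounded $\int|A|^2$ comes ``from the almost regular structure'' is unsubstantiated: that bound has to be produced, and it is not part of the definition.

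What the paper actually does is: (i) the genus of $M_t$ is uniformly bounded along an almost regular flow, so Gauss--Bonnet together with the bound on the time set where $\int_{M_t}H^2$ is large yields uniform $\int_{M_t}|A|^2$ bounds away from a small set of times (Lemma \ref{tabounds}); (ii) since Huisken's monotonicity formula is not directly available in a closed manifold, one isometrically embeds $(N,g)$ into $\R^m$ and uses the monotonicity formula with bounded forcing to obtain a uniform area-ratio bound $\area(M_t\cap B_R(x))\leq D\pi R^2$ for all sufficiently large $t$ (Lemma \ref{lem:AreaGrowth}); and (iii) with area and genus bounds, $\int_{M_{t_i}}|H|^2\to 0$, and the area-growth bound in hand, Ilmanen's adaptation of Simon's sheeting theorem (Theorem \ref{thm:IlReg}) shows the limit is supported on finitely many disjoint smooth closed embedded minimal surfaces with integer multiplicities, the convergence being sheeted away from finitely many bridge points; disjointness then follows from the maximum principle, as you say. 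Your sketch is missing both ingredient (ii) --- which is precisely what local monotonicity must be invoked for, and in the curved setting requires the forcing-term version --- and ingredient (iii), the sheeting theorem, which is the actual tool converting bounded genus/total curvature plus vanishing total mean curvature into a smooth limit with multiplicity; Allard's theorem alone only handles the density-one points, and without (ii) and (iii) the higher-density case remains unresolved.
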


We point out that some of the $\Sigma_j$ may be nonorientable, even if the initial data is $2$--sided: consider for instance a situation where the boundary of the tubular neighborhood of an embedded projective plane $P$ (say in the 3 manifold $N = \R P^3$), homeomorphic to $S^2$, flows back to $P$ as $t \to \infty$. This can also be interpreted as saying that the topology of the flow may actually "increase" in the limit; we discuss more precisely to what extent the topology of the $\Sigma_i$ can be controlled in terms of the initial data $M$ in Proposition \ref{limit_topology} below. We note that a priori the limit in Theorem \ref{longterm} may not be unique; if the ambient manifold is analytic and the convergence is with multiplicity one, we may appeal to the \L{}ojasiewicz-Simon inequality to see this is the case by \cite{Simon83Asym}, but we assume/show neither in the statement above. However, we are able to show that the limit stable minimal surface is unique whenever it is strictly stable. Recall below that a stable minimal surface is strictly stable if the first eigenvalue of the linearized operator is positive, and as pointed out, for example, in \cite{Urbano13_one-sided}, this notion continues to make sense even for $1$-sided minimal surfaces:
\begin{thm}\label{longterm_uniqueness}
    In the setting of Theorem \ref{longterm}, if the limit stable minimal surfaces are strictly stable, then any varifold converging limit of $M_{t}$ converges to this limit with the same multiplicities as $t\to\infty$. 
\end{thm}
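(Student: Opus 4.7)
The plan is to use the strict stability of each $\Sigma_j$ to promote the subsequential convergence from Theorem~\ref{longterm} into full-time varifold convergence of $M_t$ as $t\to\infty$, from which uniqueness of the limit and its multiplicities is immediate. Write $V=\sum_j m_j\Sigma_j$ for the subsequential limit, fix small pairwise disjoint tubular neighborhoods $U_j\supset\Sigma_j$, and let $L_j=\Delta_{\Sigma_j}+|A_{\Sigma_j}|^2+\Ric_N(\nu,\nu)$ denote the Jacobi operator of $\Sigma_j$ (interpreted equivariantly on the orientation double cover when $\Sigma_j$ is one-sided, following Urbano \cite{Urbano13_one-sided}). Nonincrease of area along a Brakke flow forces $\lim_{t\to\infty}\area(M_t)=\sum_j m_j\area(\Sigma_j)$, so any other subsequential varifold limit is stationary and has the same total mass; in particular, no mass can escape into the complement of $\bigcup_j U_j$ in the limit.

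The first main step is to show that for all $t$ sufficiently large, $M_t\cap U_j$ consists of exactly $m_j$ pairwise disjoint normal graphs $u_1(\cdot,t),\ldots,u_{m_j}(\cdot,t)$ over $\Sigma_j$ of small $C^2$ norm (respectively, equivariant graphs over the orientation double cover in the one-sided case). This combines the smoothness of $V$ on its support with the local-scale lower bounds and regularity afforded by the almost-regular framework, applied at the times $t_i$ from Theorem~\ref{longterm} and propagated forward in $t$ via short-time smooth existence. Each $u_k$ then solves a graphical MCF PDE whose linearization at $u=0$ is $\partial_t u=L_j u$. Strict stability means the first eigenvalue of $L_j$ is strictly negative with positive first eigenfunction $\phi_1$, and the one-parameter family $\Sigma_j(s):=\exp_{\Sigma_j}(s\phi_1\nu)$ has mean curvature vector pointing strictly toward $\Sigma_j$ for small $s\ne 0$. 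By the parabolic avoidance principle each sheet $u_k$ is trapped between the MCF evolutions of $\Sigma_j(-s)$ and $\Sigma_j(s)$, and these barriers are themselves attracted to $\Sigma_j$ at exponential rate $e^{-|\lambda_1(L_j)|t}$; since the decomposition above can be produced with $s$ arbitrarily small along $t_i\to\infty$, each $u_k\to 0$, so that $M_t\cap U_j\to m_j\Sigma_j$ as $t\to\infty$. An alternative attraction argument proceeds via a \L{}ojasiewicz--Simon inequality, since strict stability is precisely non-degeneracy of $\Sigma_j$ as a critical point of area; this version yields exponential decay of $\dist_{L^2}(u_k,0)$ without assuming analyticity of $g$ and bypasses the explicit foliation.

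Combining the attraction near each $\Sigma_j$ with the total-mass bookkeeping gives $M_t\to V$ in the varifold sense as $t\to\infty$, which forces any other subsequential varifold limit to coincide with $V$, multiplicities and all. The chief obstacle is the sheet-decomposition step when $m_j>1$: one must prevent adjacent graphs from pinching together inside $U_j$ and thereby leaving the graphical regime. Embeddedness and ordering of the sheets are preserved by applying the parabolic maximum principle to the differences $u_{k+1}-u_k$, and the smoothness of $\Sigma_j$ together with the local-scale control of the almost-regular flow rules out higher-multiplicity tangent-flow concentration inside $U_j$ that would obstruct the decomposition. The one-sided subtlety is resolved in each step by passing to the orientation double cover of $U_j$, again following \cite{Urbano13_one-sided}.
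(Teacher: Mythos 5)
Your barrier/trapping idea is the same as the paper's: both use the foliated mean-convex tubular neighborhood of a strictly stable $\Sigma_j$ (with the first eigenfunction direction, one-sided case via the double cover) plus avoidance to confine $M_t$ near $\bigcup_j\Sigma_j$ for all large $t$. But the load-bearing step of your argument --- that for \emph{all} sufficiently large $t$ the flow inside $U_j$ decomposes into exactly $m_j$ disjoint normal graphs over $\Sigma_j$ with small $C^2$ norm --- is a genuine gap, and it is stronger than anything available in this setting. Ilmanen-type regularity (Theorem \ref{thm:IlReg}) gives a multi-sheeted graphical structure only away from finitely many bridge points and only along carefully chosen sequences of times $t_i$; an almost regular flow is smooth merely for a.e.\ $t$; and when $m_j>1$ the sheets may at various times be joined by small necks, so the flow can leave the graphical regime between good times. ``Propagating forward by short-time smooth existence'' does not repair this (necks can pinch, singular times intervene), the maximum principle applied to $u_{k+1}-u_k$ presupposes the very graphs you are trying to produce, and the appeal to Bamler--Kleiner's local-scale/multiplicity-one control concerns tangent flows at finite-time singularities, not the structure of the flow near its $t\to\infty$ limit, where higher multiplicity genuinely occurs (\cite{ChenSun24_Multi2inManifold}). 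For the same reason your \L{}ojasiewicz--Simon alternative only makes sense once one already has a single graph over $\Sigma_j$, i.e.\ multiplicity one. Note also that you claim more than the theorem asserts: full varifold convergence $M_t\to V$ as $t\to\infty$, which the paper deliberately does not claim.

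The paper circumvents the decomposition entirely. After trapping, any limit must be \emph{supported} in $\bigcup_j\Sigma_j$ (comparison with the boundary leaves, which flow smoothly back to $\Sigma_j$ by Brakke regularity). To identify the multiplicities for an \emph{arbitrary} sequence $s_i\to\infty$ along which $M_{s_i}$ converges as varifolds, the paper approximates $s_i$ by a nearby ``good'' sequence $s_i'$ to which Ilmanen's analysis applies, using the weak-in-time continuity of almost regular flows (Lemma 2.7 of \cite{BamlerKleiner23_Multiplicity1}) together with the total curvature bound of Lemma \ref{tabounds} and $\int_{M_t}H^2\to0$, tests against functions supported near domains of each $\Sigma_j$, and uses area monotonicity to freeze the multiplicities. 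If you replace your step one by this approximation-plus-bookkeeping argument (or restrict your graph decomposition to the good sequences where it is actually available), the rest of your outline goes through; as written, the proof does not.
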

See Theorem \ref{longterm_uniqueness1} below. We would like to point out that the uniqueness of the limit in geometric measure theory, in general, is quite hard when the multiplicity of convergence is greater than one.

Next, we further study what the possible limit minimal surfaces can be. Because the flow is the gradient of the area functional, one expects to typically find area minimizing or at least stable minimal surfaces using mean curvature flow. Nevertheless, in general, the long-time limit of mean curvature flow can be unstable. For curve shortening flow, a well-known example is the flow $\gamma_t$ of a closed embedded curve $\gamma \subset (S^2, g_{\text{round}})$ which bisects the sphere. It will converge, as $t \to \infty$, smoothly to an equator of the sphere; in contrast, curves $\gamma$ which do not bisect the sphere flow to round points. Most curves $\gamma$ do not bisect the sphere, and we can always perturb one that bisects the sphere to one that does not. Therefore, most curve shortening flows cannot converge to an unstable geodesic. This observation motivates us to consider the extent to which we can show the flow will generically avoid unstable minimal surfaces.

The main tool we use to pursue this is the avoidance principle. In $\R^{n+1}$, if two mean curvature flows of closed hypersurfaces are disjoint at time $0$, then they will remain disjoint for all future time. Moreover, the distance between these two flows is increasing. This basic property motivates the definition of weak set flows by Ilmanen \cite{Ilmanen92_LSF} and White \cite{White95_WSF_Top}. The avoidance principle in a manifold is more subtle, because in general two disjoint mean curvature flows can actually get closer -- consider, for instance, the flow near an area minimizing minimal surface. In a manifold equipped with a positive Ricci curvature metric, the avoidance principle is very strong by White \cite{White24_Avoidance}, and lets us easily show the following:  
\begin{thm}\label{Rc_thm} In a Ricci positive manifold $(N^3,g)$, the mean curvature flow of a generic $2$--sided embedded hypersurface $M$ will go extinct in finite time. 
\end{thm}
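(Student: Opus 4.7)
The plan is to argue by contradiction using Theorem \ref{longterm} together with the strong instability of closed two-sided minimal hypersurfaces in Ricci positive geometry, with the avoidance principle supplying the contradiction.

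\textbf{Reduction via Theorem \ref{longterm}.} Assume the almost regular flow $M_t$ does not go extinct in finite time. Then Theorem \ref{longterm} gives $t_i\to\infty$ with $M_{t_i}\to V=\sum_j m_j\Sigma_j$ as varifolds, each $\Sigma_j$ a smoothly embedded closed minimal hypersurface. Monotonicity of area along MCF bounds $\sum m_j|\Sigma_j|\le|M|$, so only minimal hypersurfaces of area $\le|M|$ are candidates for the limit; by standard compactness for closed minimal hypersurfaces in Ricci positive geometry, up to a small perturbation of $M$ only countably many candidate limits must be considered.

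\textbf{Instability in Ricci positive.} For any two-sided closed minimal $\Sigma\subset(N,g)$ with $\operatorname{Ric}_N>0$, the Jacobi operator $L=\Delta_\Sigma+|A_\Sigma|^2+\operatorname{Ric}_N(\nu,\nu)$ tested against $f\equiv 1$ gives
\[
\int_\Sigma 1\cdot L(1)\,d\mu \;=\; \int_\Sigma\bigl(|A_\Sigma|^2+\operatorname{Ric}_N(\nu,\nu)\bigr)\,d\mu \;>\;0,
\]
so by variational characterization the top eigenvalue of $L$ is strictly positive with a positive eigenfunction $\phi>0$. Consequently the normal graphs $\Sigma_\epsilon^\pm:=\{\exp_p(\pm\epsilon\phi(p)\nu(p)):p\in\Sigma\}$ have (to leading order in $\epsilon$) mean curvature vector pointing strictly away from $\Sigma$, so under MCF they move monotonically further from $\Sigma$ on short time intervals --- this is the precise sense in which $\Sigma$ is a ``repeller'' for MCF.

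\textbf{Avoidance argument.} For generic $M$, one arranges that $M$ lies strictly on the outer side of $\Sigma_\epsilon^+$ or $\Sigma_\epsilon^-$ for every candidate two-sided minimal $\Sigma$, a countable condition by the area bound above. By the avoidance principle for almost regular / Brakke flows, $M_t$ remains separated from $\Sigma$ by the evolving barriers $\Sigma_\epsilon^\pm(t)$, which themselves drift away from $\Sigma$, so the distance from $M_t$ to $\Sigma$ stays uniformly positive. This contradicts $M_{t_i}\to m\Sigma$ in the varifold sense. For one-sided $\Sigma_j$ limits the same conclusion follows using the boundary $\partial\cT_\delta(\Sigma)$ of a small tubular neighborhood as a barrier: by a Riccati/tube formula in Ricci positive geometry, $\partial\cT_\delta(\Sigma)$ is strictly mean convex toward $\Sigma$ for small $\delta$, hence contracts into $\Sigma$ in finite time under MCF; perturbing $M$ to lie outside such a tube again rules out varifold convergence to $m\Sigma$ by avoidance.

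\textbf{Main obstacle.} The delicate point is the multiplicity $>1$ case, where $M_t$ might wrap around $\Sigma$ without being graphical, so the linearized picture above does not immediately apply to the distance function from $M_t$ to $\Sigma$. I would handle this either by invoking Bamler--Kleiner style genericity to force multiplicity-one convergence (after which the barrier argument above applies directly to the graph function of $M_t$ over $\Sigma$), or by a direct comparison of the one-sided signed distance to $\Sigma$ along the almost regular flow, using that in a Ricci positive manifold the evolving parallel surfaces in a tube around $\Sigma$ satisfy a strict maximum principle which is incompatible with $M_t$ remaining in an $\epsilon$-tube of $\Sigma$ for all large time.
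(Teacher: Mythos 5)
There is a genuine gap, and the route you take is also much heavier than what the statement needs. The central problem is your genericity step: you cannot, by a small perturbation of the initial data, ``arrange that $M$ lies strictly on the outer side of $\Sigma_\epsilon^+$ or $\Sigma_\epsilon^-$ for every candidate two-sided minimal $\Sigma$.'' The initial hypersurface will in general intersect the minimal hypersurfaces it might limit to, and no small isotopy of $M$ makes it disjoint from (let alone lie on one side of) all of them; moreover, without a bumpy-metric hypothesis the set of candidate limits with bounded area need not be countable, so the ``countable condition'' has no justification. The eigenfunction barriers you build are only useful once the flow is already trapped in a small tube around $\Sigma$ at a large time, and that is precisely why the paper perturbs $M_{t_i}$ at late times (Lemma \ref{perturb}, Theorem \ref{pw_thm}) rather than the initial data; converting such late-time perturbations into a perturbation of $M$ runs into the higher-multiplicity issue you flag, and your proposed fix --- ``Bamler--Kleiner style genericity forces multiplicity-one convergence'' --- conflates their multiplicity-one theorem for tangent flows at singularities with multiplicity one of the \emph{long-time} limit, which is exactly the open Conjecture \ref{conj_multi1}. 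So your argument ultimately rests on an unproved conjecture. Two further problems: your strategy invokes Theorem \ref{longterm}, which is a three-dimensional regularity statement, while Theorem \ref{Rc_thm} is asserted for hypersurfaces in $(N^n,g)$ in all dimensions; and your one-sided barrier has the wrong sign --- in $\Ric>0$ the boundary of a small tube around a minimal $\Sigma$ is mean convex \emph{away} from $\Sigma$ and expands under the flow (this is what makes it a barrier), whereas a tube boundary that ``contracts into $\Sigma$ in finite time'' would give no obstruction at all once it disappears.

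The paper's proof avoids all of this by using the quantitative form of the avoidance principle, Theorem \ref{ambient_avoidance}: with $\Ric\geq\Lambda>0$, any two weak set flows satisfy $d(X(t),Y(t))\geq e^{\Lambda t}d(X(0),Y(0))$. If the flow of $M$ does not go extinct, take any one-sided perturbation $M'$ with $d(M,M')>\delta$; then $d(M'_t,M_t)\geq\delta e^{\Lambda t}$ must eventually exceed $\operatorname{diam}(N)$, which forces $M'_t=\emptyset$ in finite time. Thus surfaces arbitrarily close to $M$ have flows going extinct, which is the genericity claimed --- with no regularity of the limit, no stability or eigenvalue analysis, no multiplicity issue, and valid in every dimension. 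Your proposal misses this exponential-repulsion mechanism, which is the actual content of the Ricci-positive hypothesis here, and uses avoidance only as a disjointness statement against locally constructed barriers.
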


This can be viewed as a generalization of the generic behavior of the curve shortening flows in $S^2$. As a corollary, this yields another proof of the classical fact that $H_{2}(N,\R)$ vanishes for $3$--manifolds $N$ of positive Ricci curvature, see Corollary \ref{vanish}. In the case that $(N,g)$ is not Ricci positive there could be stable minimal surface, and we cannot expect the flows starting from generic embedded hypersurfaces to go extinct in finite time. However, we can use the avoidance principle to show that after perturbations, the flow can avoid those unstable limits. Our next theorem, inspired by the work of Colding and Minicozzi \cite{ColdingMinicozzi12_generic}, says that in closed $3$--manifolds, from any embedded initial data, one may construct piecewise generic mean curvature flows which avoid unstable minimal surfaces as limits:
\begin{thm} \label{pw_thm} Let $M^2$ be a $2$--sided properly embedded surface of a closed compact Riemannian $3$--manifold $(N^3,g)$. Then there exists a piecewise almost regular flow emanating from it which either goes extinct in finite time, or there exist $t_i \to \infty$, such that $M_{t_i}$ converges possibly with multiplicity to a (potentially disconnected) stable minimal surface. 
\end{thm}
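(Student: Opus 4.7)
The plan is to iteratively apply Theorem \ref{smoothfate} and, whenever the long-time limit contains an unstable component, to perturb the flow at a late time using the first eigenfunction of the stability (Jacobi) operator on that component so as to drive subsequent flow off of it. Concretely, set $\tilde M^{(0)} := M$ and run an almost regular flow $(\tilde M^{(0)})_t$. If it goes extinct in finite time we are done. Otherwise Theorem \ref{smoothfate} produces $t_i \to \infty$ along which $(\tilde M^{(0)})_{t_i} \to V^{(0)} = \sum_j m^{(0)}_j \Sigma^{(0)}_j$ in the varifold sense; if every $\Sigma^{(0)}_j$ is stable we are done, else fix an unstable $\Sigma^{(0)}_{j_0}$.

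For the perturbation, pick $t_i$ large enough that, in a small tubular neighborhood $U$ of $\Sigma^{(0)}_{j_0}$, $(\tilde M^{(0)})_{t_i}$ decomposes as $m^{(0)}_{j_0}$ smooth graphs $u_1,\ldots,u_{m^{(0)}_{j_0}}$ over $\Sigma^{(0)}_{j_0}$ with $\|u_k\|_{C^\infty}\to 0$; this is standard Brakke--White interior regularity combined with the smooth, multiplicity $m^{(0)}_{j_0}$ varifold convergence. Let $\phi_1 > 0$ be the first eigenfunction of the Jacobi operator $J = \Delta_{\Sigma^{(0)}_{j_0}} + |A|^2 + \Ric(\nu,\nu)$ on $\Sigma^{(0)}_{j_0}$, lifting to the orientable double cover in the one-sided case (cf.\ \cite{Urbano13_one-sided}) so that $\phi_1$ determines a well-defined normal section in an orientable subneighborhood $\tilde U \subset U$. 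Replace each sheet $u_k$ by $u_k + \eta_k \phi_1$ with $\eta_k = \pm \eps$ chosen so that every perturbed sheet lies strictly on a single side of $\Sigma^{(0)}_{j_0}$ (possible once $\eps > \|u_k\|_{C^0}/\inf \phi_1$), then glue to the unaltered flow outside a slightly larger neighborhood using a cutoff to obtain a smooth surface $\tilde M^{(1)}$ that $C^\infty$--approximates $(\tilde M^{(0)})_{t_i}$. Start a new almost regular flow from $\tilde M^{(1)}$.

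By the Brakke flow avoidance principle applied to $(\tilde M^{(1)})_t$ and the stationary flow $\Sigma^{(0)}_{j_0}$, the new flow is disjoint from $\Sigma^{(0)}_{j_0}$ for all $t$. Because $\Sigma^{(0)}_{j_0}$ is unstable, the linearized graph equation $\p_t u = Ju$ admits an exponentially growing mode along $\phi_1$; a comparison argument, using $\eps\phi_1$ as a sub-solution from below (or super-solution from above), shows that the perturbed sheets leave $\tilde U$ in uniformly bounded time. This forces a definite drop $\delta = \delta(\Sigma^{(0)}_{j_0}) > 0$ in the long-time limiting area -- at least one copy of $\Sigma^{(0)}_{j_0}$ is removed from the new varifold limit compared to the old. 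Iterating and noting that the total decrease in limiting area is bounded by $\area(M)$ while the areas of closed minimal surfaces in $(N,g)$ are uniformly bounded below by the monotonicity formula, the procedure terminates after finitely many steps; concatenating the pieces yields the desired piecewise almost regular flow.

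I expect the main obstacles to be twofold. First, one must ensure each perturbed initial surface $\tilde M^{(k)}$ actually generates an almost regular flow, which requires extending the Bamler--Kleiner construction from $\R^3$ to closed $3$--manifolds and should be invoked from an earlier portion of the paper. Second, the perturbation near a one-sided unstable component is delicate: $\phi_1$ descends only to a normal section on the orientable double cover, and designating a consistent side of $\Sigma^{(0)}_{j_0}$ to push sheets toward is only a local notion, so one must arrange the cutoff so that the resulting surface is globally smooth in $N$ and so that the avoidance and instability arguments survive the gluing. The remaining steps are essentially quantitative repackagings of the Brakke avoidance principle and the linearized mean curvature flow near an unstable minimal surface.
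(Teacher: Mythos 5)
Your overall strategy (iterate the flow, perturb at a late smooth time using the first eigenfunction of the Jacobi operator of an unstable limit component, restart, and show only finitely many perturbations are needed) is the same as the paper's, but two steps as you wrote them have genuine gaps. First, the decomposition you use of $M_{t_i}$ near $\Sigma^{(0)}_{j_0}$ into $m^{(0)}_{j_0}$ disjoint graphs is only valid away from the finitely many bridge points of Theorem \ref{thm:IlReg}: the sheets are in general joined by small necks, which may connect sheets lying on opposite sides of $\Sigma^{(0)}_{j_0}$. Perturbing each sheet separately by $\eta_k\phi_1$ with independently chosen signs is then not well defined across the necks, need not keep the surface embedded (adjacent sheets pushed toward each other can cross), and in particular does not make the perturbed surface disjoint from $\Sigma^{(0)}_{j_0}$, which is exactly what your avoidance step requires. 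The paper's Lemma \ref{perturb} sidesteps this by first using Lemma \ref{lem:LimNbhd} to trap all of $M_{t_i}$ in a small tubular neighborhood and then applying an \emph{ambient} isotopy $\Psi$, generated by the vector field $\partial_s$ of the foliation $(x,s)\mapsto\exp_x(s\varphi(x)\nu(x))$, which pushes the entire contents of the neighborhood -- sheets and necks together -- out of $\cN_{\delta}(\Sigma)$ to one side; this is the correct fix for both the bridge-point and the one-sided issues.

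Second, and more seriously, disjointness from the static weak set flow $\Sigma^{(0)}_{j_0}$ obtained from the avoidance principle does not prevent the restarted flow from converging back to $\Sigma^{(0)}_{j_0}$ as $t\to\infty$: disjoint flows can approach each other in a general ambient manifold (the paper's own example of graphs over a strictly stable minimal surface converging back to it makes this point). Your sub/supersolution argument with $\eps\phi_1$ only shows the perturbed sheets leave the neighborhood in bounded time; it does not exclude a later return, so the claimed "definite drop $\delta>0$ in the limiting area" at each step, on which your termination argument rests, is not justified. The mechanism that is actually needed, and that the paper uses, is that instability makes the foliation leaves $\Sigma_{\pm\tau/10}$ strictly mean convex with mean curvature pointing away from $\Sigma$, so their (short-time smooth) flows move a definite distance $\eta>0$ away from $\Sigma$ on each time interval of length $\delta$; iterating the avoidance principle against these barrier flows shows the restarted flow can \emph{never} re-enter $\cN_{\delta}(\Sigma)$. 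With that permanent exclusion in hand, finiteness of the number of perturbations follows (the paper combines it with disjointness of the limit surfaces and the clearing-out lemma to bound the number of surviving components); your area-accounting alternative could also then be made to work, but not without first closing this barrier step.
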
 

When the ambient metric is bumpy, which is a generic condition by work of White \cite{White91_Bumpy, White17_Bumpy}, we observe that the limit in the statement above is unique by Theorem \ref{longterm_uniqueness}. Note that in light of \cite{ChenSun24_Multi2inManifold}, there are examples where the flow asymptotically converges to a stable limiting surface with multiplicity greater than one, so higher multiplicity convergence in the above may actually occur, and in the case the limit is strictly stable seems to be robust under further perturbations. We will give the definition of piecewise almost regular flow in the section below, but roughly speaking, it is a concatenation of almost regular flows where, by hand, isotopies are performed at the jumps; the number of perturbations by isotopy taken will be finite, and the perturbations can be taken as small as we wish. In that sense, the statement above is a statement on generic flows following the convention of Colding and Minicozzi \cite{ColdingMinicozzi12_generic}.

Of course, it is certainly desirable to only require perturbing the initial data. The obstruction to doing so in the statement above essentially boils down to the possibility of convergence to an unstable minimal surface with multiplicity; in analogy to singularity analysis and the resolution of the Multiplicity One Conjecture by Bamler and Kleiner \cite{BamlerKleiner23_Multiplicity1}, one might expect this to not occur; see the analogous situation in Almgren-Pitts minmax theory \cite{Zhou19_multi1}. We recall the following conjecture in \cite{ChenSun24_Multi2inManifold}:

\begin{conj}[Conjecture 1.3 in \cite{ChenSun24_Multi2inManifold}]\label{conj_multi1}
    Suppose the long-time limit of an almost regular mean curvature flow in $(N^n,g)$ is an unstable minimal surface, then the convergence must be multiplicity $1$.
\end{conj}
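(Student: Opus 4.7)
The plan is to run a contradiction argument powered by the unstable eigenfunction of the Jacobi operator together with a sheeting decomposition near the candidate limit, in close analogy with arguments ruling out higher multiplicity in Almgren--Pitts min--max \cite{Zhou19_multi1} and in the Bamler--Kleiner analysis of singularities. Suppose $M_{t_i}\to m\Sigma$ in the varifold sense with $m\ge 2$, where $\Sigma$ is a smooth unstable minimal surface. Let $\lambda_1>0$ and $\varphi_1>0$ denote the first eigenvalue and sign-definite eigenfunction of the Jacobi operator $L_\Sigma = \Delta_\Sigma + |A|^2 + \Ric_N(\nu,\nu)$, so that $L_\Sigma\varphi_1=\lambda_1\varphi_1$; in the one-sided case these are understood on the orientation double cover with the appropriate $\ZZ_2$-equivariance as in \cite{Urbano13_one-sided}.

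The first step is to upgrade varifold convergence at the times $t_i$ to smooth sheeting on intervals. Using Brakke--White regularity together with the $L^p$-integrability of the local scale function built into the definition of almost regular flow, one should show there is a tubular neighborhood $U$ of $\Sigma$ and a time $T_0$ after which $M_t\cap U$ decomposes, outside a spacetime set of small parabolic measure, into $m$ smooth graphs $u_1(\cdot,t)\le\cdots\le u_m(\cdot,t)$ over $\Sigma$ with $\|u_j(\cdot,t)\|_{C^2}\to 0$ as $t\to\infty$. This is the Bamler--Kleiner philosophy applied at infinity rather than at a singular time.

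Each graph then satisfies the graphical mean curvature flow equation $\partial_t u_j = L_\Sigma u_j + Q_j$, where $Q_j$ is quadratic in $(u_j,\nabla u_j,\nabla^2 u_j)$ with coefficients going to zero as $\|u_j\|_{C^2}\to 0$. The difference $w(x,t):=u_m(x,t)-u_1(x,t)\ge 0$ therefore satisfies the linear parabolic inequality $\partial_t w \ge L_\Sigma w - \eta(t)\bigl(|w|+|\nabla w|+|\nabla^2 w|\bigr)$ with $\eta(t)\to 0$. Testing against $\varphi_1$ and using self-adjointness of $L_\Sigma$, the quantity $F(t):=\int_\Sigma w(\cdot,t)\,\varphi_1\,dA_\Sigma$ satisfies $F'(t)\ge (\lambda_1-o(1))F(t)$, and hence grows at least like $e^{(\lambda_1-o(1))t}$ from any time at which $F>0$. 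Since $w(\cdot,t_i)\to 0$ forces $F(t_i)\to 0$, this is only consistent with $F\equiv 0$, hence $u_1\equiv u_m$ on $U$, contradicting $m\ge 2$ once connectivity of the sheeting structure over $\Sigma$ is invoked.

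The dominant obstacle is rigorously justifying the sheeting step. Almost regular flows admit singular spacetime points of controlled density, and near $\Sigma$ a sheet may undergo a transient singularity, a neckpinch, or exchange ordering with a neighboring sheet through $\partial U$; one must show such events contribute only to the lower-order error $\eta(t)=o(1)$ in the parabolic inequality above, which morally is dictated by the integrability of the local scale function in \cite{BamlerKleiner23_Multiplicity1} forcing near-singular events in $U$ to occupy vanishing parabolic measure as $t\to\infty$. A further subtlety is the one-sided case, in which sheets on opposite leaves of the orientation double cover may evolve asymmetrically and must be reconciled with the $\ZZ_2$-equivariance used to extract $\varphi_1$. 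I expect most of the technical effort to lie in making this sheeting theorem precise; the linear parabolic eigenfunction argument should then follow from essentially standard estimates, yielding a conditional proof of Conjecture \ref{conj_multi1}.
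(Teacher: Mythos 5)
The statement you are trying to prove is not proved in the paper at all: it is quoted verbatim as an open conjecture (Conjecture 1.3 of \cite{ChenSun24_Multi2inManifold}), and the whole reason the authors introduce piecewise almost regular flows in Theorem \ref{pw_thm} is that they cannot rule out higher-multiplicity convergence to an unstable limit by perturbing only the initial data. So there is no paper proof to match, and your proposal should be judged on whether it closes the conjecture. It does not, and you essentially say so yourself: the ``sheeting at infinity'' step that you assume is exactly where the difficulty of the conjecture lives. Concretely, the regularity available (Ilmanen's theorem, Theorem \ref{thm:IlReg} in the paper) gives $m$-sheeted graphical convergence only (i) along a subsequence of times $t_i$, and (ii) away from finitely many bridge points where the sheets are joined by necks. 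Your differential inequality $F'(t)\ge(\lambda_1-o(1))F(t)$ needs the flow to be a union of $m$ ordered graphs over all of $\Sigma$ on an entire interval $[T_0,\infty)$; with only subsequential varifold convergence (the paper explicitly cautions that even the limit may fail to be unique when $\Sigma$ is not strictly stable, cf.\ the remark after Lemma \ref{lem:LimNbhd}), the flow could leave the graphical regime between the $t_i$, and then the exponential growth of $F$ from a positive value is not in contradiction with $F(t_i)\to 0$. Near the bridge points the functions $u_1,\dots,u_m$ are not defined, $w=u_m-u_1$ has no meaning, and the neck regions carry curvature concentration that is not obviously absorbed into an $o(1)$ error; handling precisely these regions is the hard analytic content in the analogous multiplicity-one results you cite (Zhou's min--max argument and Bamler--Kleiner both expend most of their effort there, with tools -- catenoid estimates, resp.\ the scale-function analysis at a fixed singular time -- that do not transfer for free to $t\to\infty$). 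Your appeal to the integrability of the local scale function is also misdirected: that condition is imposed on compact time intervals and controls singular behavior there; nothing in the definition of almost regular flow gives decay of near-singular parabolic measure as $t\to\infty$.

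Two further points to be careful about. First, the sign and positivity structure you rely on ($w\ge0$, hence $F>0$ unless $w\equiv0$) presumes the $m$ sheets are genuinely distinct ordered graphs; but higher multiplicity can also arise from a single connected component wrapping over $\Sigma$ (e.g.\ the double cover of a one-sided $\Sigma$, or the ``tightened donut'' picture described in the proof of Corollary \ref{existence2}), in which case there is no globally defined top and bottom sheet and the $\ZZ_2$-equivariant reduction you sketch has to be carried out, not just announced. Second, the linearized argument itself is fine and is indeed the expected mechanism (it is the same heuristic the authors invoke when comparing with \cite{Zhou19_multi1}), so your proposal is a reasonable research plan; but as written it is a conditional reduction of the conjecture to an unproven long-time sheeting theorem with quantitative error control, not a proof.
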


Despite this, we point out that as applications, we get flow proofs of the following existence theorems for stable minimal surfaces. In the proofs of these, we will often specifically use piecewise generic flows, because the topological change through singularities is easy to understand. We also recall that obtaining stability of the minimal surfaces is significant, because these often have better--controlled topology and geometry, for instance, in the context of positive scalar curvature. These corollaries are not new, and there have been long--standing proofs of them using geometric measure theory or harmonic map theory, see \cite{FedererFleming60_Current, SchoenYau79_ExistenceIncompressibleMinSurf, SacksUhlenbeck81_minimal2sphere, MeeksYau80_Top3d,MeeksSimonYau82_MinSurf, FreedmanHassScott83_LeastArea}, among others. On the other hand, the construction of these minimal surfaces via flows, along with a number of fairly explicit perturbations, is quite amenable to numerical approximation with the caveat that one has found appropriate initial data to start with. In the following statements $(N,g)$ is a compact orientable Riemannian $3$--manifold, where in this case oriented surfaces are $2$--sided and vice versa:
\begin{cor}\label{existence1} Suppose $\pi_2(N)$ is nontrivial. Then $N$ contains an embedded stable minimal sphere or projective plane. 
\end{cor}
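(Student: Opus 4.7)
To begin, by the embedded sphere theorem of Papakyriakopoulos, the non-triviality of $\pi_2(N)$ produces a smoothly embedded $2$-sphere $S\subset N$ with $[S]\neq 0$ in $\pi_2(N)$; being simply connected, $S$ is automatically $2$-sided in the orientable $N$, so Theorem \ref{pw_thm} yields a piecewise almost regular flow $\{M_t\}$ with $M_0=S$. My plan is to apply this flow, rule out finite-time extinction via a null-homotopy argument in the universal cover, and then extract a sphere or projective plane from the limit minimal surface by an Euler characteristic count. I would additionally insist that each of the finitely many by-hand isotopies used to define the piecewise flow be supported in small ambient balls (a restriction compatible with the construction of Theorem \ref{pw_thm}), so that they preserve both the free homotopy class and the diffeomorphism type of the evolving surface.

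Next I rule out finite-time extinction. If $M_t$ vanished at some $T<\infty$, passing to the universal cover $\tilde N$ the pullback of $\{M_t\}$ is an almost regular flow emanating from a disjoint union $\bigsqcup_\alpha \tilde S_\alpha$ of lifted embedded spheres, and it also extincts by time $T$. The resulting spacetime trace, together with the ball-supported isotopies, exhibits each $\tilde S_\alpha$ as null-cobordant in the simply connected $\tilde N$. Hurewicz then forces $[\tilde S_\alpha]=0$ in $\pi_2(\tilde N)\cong\pi_2(N)$, i.e., $[S]=0$ in $\pi_2(N)$, contradicting the choice of $S$. Hence Theorem \ref{pw_thm} supplies $t_i\to\infty$ along which $M_{t_i}\to\sum_{j=1}^k m_j\Sigma_j$ as varifolds, each $\Sigma_j$ a smooth embedded stable minimal surface.

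To finish I identify a sphere or projective plane among the $\Sigma_j$ via Euler characteristic. Since $M_0$ has genus $0$ and almost regular singularities in dimension two are neckpinches and vanishing spheres (the only possibilities in view of the tangent flow classification underlying almost regular flows), every surviving component of $M_t$ remains a topological sphere; as the flow does not extinct, $M_{t_i}$ is a nonempty disjoint union of spheres and so $\chi(M_{t_i})\geq 2$. The varifold convergence $M_{t_i}\to\sum_j m_j\Sigma_j$ presents $M_{t_i}$, outside a small set, as an $m_j$-sheeted graph over each $\Sigma_j$ joined by thin necks, and resolving each such neck subtracts $2$ from $\chi$, whence
\[
\sum_{j} m_j\,\chi(\Sigma_j)\;\geq\;\chi(M_{t_i})\;\geq\;2.
\]
Thus some $\chi(\Sigma_j)>0$. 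If $\Sigma_j$ is $2$-sided it is orientable and $\Sigma_j\cong S^2$; if $\Sigma_j$ is $1$-sided it is nonorientable in the orientable $N$, and positive Euler characteristic then forces $\Sigma_j\cong \RR P^2$. Either way we obtain the desired stable minimal sphere or projective plane. I expect the main technical delicacy to be the Euler characteristic comparison through the varifold limit with multiplicity, which should follow from the graphical sheet description of almost regular varifold convergence used in the paper (and in particular from Proposition \ref{limit_topology}).
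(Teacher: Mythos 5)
Your proposal follows the same skeleton as the paper's proof --- the sphere theorem gives embedded, homotopically nontrivial, $2$--sided initial data; Theorem \ref{pw_thm} gives the piecewise flow; one rules out finite-time extinction; and a sphere or projective plane is then read off from the sheeted structure of the limit --- but it replaces the paper's extinction argument with a genuinely different one. The paper rules out extinction by using the approximating surgery flows to show an extinct sphere would be isotopic to a ``marble tree'' whose high-curvature pieces bound handlebodies, hence bounds a $3$--ball in $N$ and is null-homotopic. You instead lift the flow to the universal cover, argue that extinction makes the lifted sphere null-homologous there, and conclude null-homotopy by Hurewicz; this avoids the marble-tree structure theory of \cite{BuzanoHaslhoferHershkovits21_Moduli, Mramor18_Finiteness} at the cost of transplanting the argument to a (generally noncompact) cover. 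Your closing Euler characteristic count $\sum_j m_j\chi(\Sigma_j)\geq\chi(M_{t_i})\geq 2$ is a mild repackaging of Proposition \ref{limit_topology}(3) with genus $0$ and is fine, though note that the paper's bookkeeping in fact shows \emph{every} $\Sigma_j$ is a sphere or projective plane, not just one of them.

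Three points in your version need shoring up. First, the assertion that ``the resulting spacetime trace \dots exhibits each $\tilde S_\alpha$ as null-cobordant'' is not justified as written: the spacetime support of a singular Brakke flow does not by itself furnish a homology through the singular times --- this is precisely why the paper proves Lemma \ref{lem:PreserveHomology} via the surgery approximation. The correct route is to note that the approximating surgery flows also go extinct, that by genus monotonicity and $2$--sidedness all components are spheres so every surgery neck is a ball and every discarded component is a sphere bounding a ball, and that these simply connected regions lift homeomorphically to $\tilde N$; running the same bookkeeping upstairs gives $[\tilde S]=0$ in $H_2(\tilde N;\Z)$. You cannot shortcut this by proving null-homology downstairs and ``pulling it back,'' since homology classes do not pull back along covering maps --- the geometric regions must be lifted. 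Second, calling the pullback to the noncompact cover an ``almost regular flow'' is loose, as the paper's theory is stated for closed ambient manifolds; but you only need the lifted finite-time surgery approximation, not the full theory upstairs. Third, your insistence that the by-hand isotopies be supported in small ambient balls is neither compatible with the construction (Lemma \ref{perturb} builds them as ambient isotopies supported in tubular neighborhoods of unstable minimal surfaces) nor needed: any ambient isotopy preserves the diffeomorphism type, the free homotopy class, and the homology class of the evolving surface. With these repairs your argument is sound.
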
 
Note that in the statement above, we invoke the sphere theorem to find embedded initial data for which to start the flow, and so the above does not constitute a new proof of it. As we pointed out already, compared to other techniques, a pitfall of using the mean curvature flow to find minimal surfaces is that it is best to start with well-prepared (i.e., embedded) initial data. In some cases, the flow can produce stable minimal surfaces of nontrivial topology, the most basic of which is the following. 

\begin{cor}\label{existence2} Suppose $N$ has a trivial second homotopy group and contains a homologically nontrivial embedded torus. Then $N$ contains an embedded stable minimal surface homeomorphic either to a torus or a Klein bottle.
\end{cor}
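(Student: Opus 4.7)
The plan is to apply Theorem \ref{pw_thm} to the given homologically nontrivial torus $T$ and track the homology class through the resulting piecewise almost regular flow. Since $T$ is $2$--sided in the orientable $3$--manifold $N$, the flow $(M_t)_{t\ge 0}$ bounds an evolving region and preserves the integer class $[T]\in H_2(N;\ZZ)$: this holds through the almost regular phases via the region--based (Caccioppoli set) description of Brakke flows underlying Theorem \ref{pw_thm} in the Bamler--Kleiner framework, and trivially through the small by--hand isotopy perturbations of the piecewise construction. As $[T]\neq 0$ by hypothesis, the flow cannot go extinct in finite time, so by Theorem \ref{pw_thm} there is a sequence $t_i\to\infty$ along which $M_{t_i}\to\sum_j m_j\Sigma_j$ as varifolds, with the $\Sigma_j$ embedded, disjoint, stable minimal surfaces and $m_j\in\NN$.

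Next I would pass the homology identity to the limit. The integer $2$--cycle associated to $\sum_j m_j\Sigma_j$ must equal $[T]\neq 0$, where the $2$--sided components contribute their oriented fundamental class weighted by $m_j$, while a $1$--sided $\Sigma_j$ with even multiplicity contributes the class of its $2$--sided double (a $1$--sided component with odd multiplicity would not yield an integer cycle at all). I would then apply Proposition \ref{limit_topology} to bound the orientable double--cover genus of each $\Sigma_j$ by the genus of $T$, i.e.\ by $1$, which restricts the topological type of each $\Sigma_j$ to $S^2$, $\RR P^2$, $T^2$, or a Klein bottle $K$. The hypothesis $\pi_2(N)=0$ implies that every embedded $2$--sphere in $N$ is null--homotopic, hence null--homologous, ruling out $S^2$; similarly the $2$--sided double of an embedded $\RR P^2$ is an $S^2$, again null--homologous, so $\RR P^2$ components also contribute trivially to $H_2(N;\ZZ)$. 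Since $[T]\neq 0$, some $\Sigma_j$ must therefore be either a torus $T^2$ or a Klein bottle $K$ (whose $2$--sided double is a torus and can represent $[T]$), and this $\Sigma_j$ is the desired stable minimal surface.

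The main technical obstacle will be justifying preservation of $[T]\in H_2(N;\ZZ)$ through the singular times of the almost regular phases, and carefully bookkeeping how $1$--sided varifold limits (with their even multiplicities) contribute to integer homology via their $2$--sided doubles. Both steps should follow from the Bamler--Kleiner region description of generic/almost regular flows together with the fact that the perturbative isotopies of the piecewise construction are homology--trivial. A secondary but softer point is extracting the genus bound cleanly from Proposition \ref{limit_topology}, which controls the topology of limit components in terms of the initial data.
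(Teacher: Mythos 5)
Your overall strategy is the paper's: apply Theorem \ref{pw_thm} to the torus, rule out extinction homologically, and use Proposition \ref{limit_topology} together with $\pi_2(N)=0$ to exclude spheres and projective planes. The extinction step is essentially fine (the paper justifies homology preservation at smooth times via the surgery approximation, Lemma \ref{lem:PreserveHomology}, rather than a Caccioppoli-set description, but the content is the same). The genuine gap is in your limit-passing step: you assert that the varifold limit $\sum_j m_j\Sigma_j$ carries an integer $2$--cycle equal to $[T]$, with each $2$--sided $\Sigma_j$ contributing $m_j[\Sigma_j]$. Varifold convergence carries no orientation data, and when the convergence is multi-sheeted the sheets can cancel integrally: for multiplicity-$2$ convergence of an oriented surface onto a $2$--sided $\Sigma_j$ the two sheets form (away from bridge points) the boundary of a thin $I$--bundle and carry opposite induced orientations, so the integral contribution is $0$, not $2[\Sigma_j]$. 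This is exactly why Proposition \ref{limit_topology}(1) only asserts that the limit is homologous to $M$ in $H_2(N,\Z_2)$. Mod-$2$ information does not rescue your endgame either, since a class that is nontrivial in $H_2(N,\Z)$ may be divisible by $2$ and hence vanish mod $2$; so "all limit components are spheres or $\R P^2$'s" cannot be contradicted by equating integer classes of the limit with $[T]$.

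The repair, which is the paper's actual argument, avoids assigning any integer cycle to the limit: at smooth finite times $M_t$ is integrally homologous to $T$ (Lemma \ref{lem:PreserveHomology}), and for large $t_i$ the surface $M_{t_i}$ lies in a small tubular neighborhood of $\bigcup_j\Sigma_j$ (Lemma \ref{lem:LimNbhd} / the sheeted convergence in Theorem \ref{smoothfate}). If every $\Sigma_j$ were a sphere or a projective plane, then since $\pi_2(N)=0$ each such $\Sigma_j$ (or the spherical boundary of the tubular neighborhood of a projective plane) is nullhomotopic, so the inclusion of its tubular neighborhood into $N$ kills second homology; hence $M_{t_i}$, and therefore $T$, would be nullhomologous in $H_2(N,\Z)$, a contradiction. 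With that substitution your restriction of the possible topological types to $S^2$, $\R P^2$, $T^2$, and the Klein bottle via Proposition \ref{limit_topology}(3) is correct, and the conclusion follows as you state.
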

Recall that hyperbolic manifolds are aspherical, so the space of such $3$--manifolds with $\pi_2$ trivial is quite large and is even generic amongst $3$--manifolds from a number of perspectives, for example, see \cite{DunfieldTHurston06_Random3Mfd, Maher10_RandomHeegaard}. For surfaces of more general topology, we can show the following result, which is very much in the spirit of the existence result of Freedman, Hass, and Scott \cite{FreedmanHassScott83_LeastArea} -- in fact, it implies the result above as we discuss in Section \ref{applications}. The surfaces in the Corollary below are known as algebraically incompressible surfaces, and are incompressible in the more geometric sense via Dehn's lemma: 
\begin{cor}\label{existence3} Suppose $\iota: \Sigma \to N$ is an orientable properly embedded surface of $N$ of genus $g \geq 1$ for which $\iota_{*} \pi_1(\Sigma) \to \pi_1(N)$ is injective, and that $N$ has trivial second homotopy group. Then either $M = \iota(\Sigma)$ is homotopic to an orientable stable minimal surface or a double cover of a stable minimally embedded connect sum of $g+1$ projective planes, where $g$ is the genus of $\Sigma$. 
\end{cor}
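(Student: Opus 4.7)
The strategy is to run the piecewise almost regular flow supplied by Theorem~\ref{pw_thm} with initial data $M = \iota(\Sigma)$. The theorem gives a dichotomy---finite time extinction, or subsequential varifold convergence to a stable minimal surface---so the proof reduces to (i) ruling out extinction, and (ii) identifying the homotopy type of the limit.

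\emph{Step (i).} At a neck-pinch singularity of an almost regular flow, the pinched cross-sectional curve $\gamma$ shrinks to a point inside a small ball of $N$ via the local shrinking cylinder model, so $\gamma$ is null-homotopic in $N$. Because $\iota$ is $\pi_1$-injective---and $\pi_1$-injectivity is preserved both by small isotopies at the jumps and, by a Seifert--van Kampen calculation, across any pinch of a curve which is null-homotopic in $N$---the evolving main component stays incompressible throughout the flow. Hence any such $\gamma$ is actually null-homotopic on the current surface and bounds a disk on it, so the pinch merely splits off a small sphere and leaves the genus of the main connected component unchanged. Shrinking-sphere singularities only delete such spherical components, and the jump isotopies can be taken arbitrarily small and hence preserve topology. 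The main component therefore always has genus $g \geq 1$, is never a sphere, and cannot disappear under these singularities; the flow exists for all time and converges subsequentially to a stable minimal surface.

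\emph{Step (ii).} Let $\Sigma_0$ be the connected component of the limit towards which the main connected component of the flow concentrates with multiplicity $m$; connectedness rules out the limit having more than one relevant component. If $\Sigma_0$ is two-sided in $N$, its tubular neighborhood is trivial and a two-sided surface $C^k$-close to $m\Sigma_0$ is a disjoint union of $m$ graphs over $\Sigma_0$, so connectedness forces $m=1$ and $M$ is isotopic to the orientable stable minimal surface $\Sigma_0$, giving the first alternative. If $\Sigma_0$ is one-sided, its tubular neighborhood is a twisted $I$-bundle, and a nearby two-sided surface is $m/2$ copies of the boundary of that neighborhood---i.e.\ of the orientable double cover of $\Sigma_0$; connectedness forces $m=2$, so $M$ is homotopic to this double cover. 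The Euler characteristic identity $2-2g = \chi(M) = 2\chi(\Sigma_0) = 2(2-k)$ then gives $k = g+1$, identifying $\Sigma_0$ as a stably minimally embedded connect sum of $g+1$ real projective planes, which is the second alternative.

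The main obstacle is Step (i): executing the topological analysis of singularities in a piecewise almost regular flow rigorously, in particular verifying that every flow pinch in fact occurs at a curve null-homotopic in $N$ (so that $\pi_1$-injectivity is preserved) and that the jump isotopies can be chosen small enough not to destroy this control on the main component. A secondary technical point is that the smooth convergence of $M_t$ to $\Sigma_0$ on the regular part must be strong enough to extract the graphical, respectively double-cover, structure used in Step (ii); this should follow from the stability of $\Sigma_0$ together with standard curvature and separation estimates.
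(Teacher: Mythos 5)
Your Step (ii) contains the essential gap. You argue that if the limit component $\Sigma_0$ is two-sided, then a connected surface close to $m\Sigma_0$ must be a disjoint union of $m$ graphs, so connectedness forces $m=1$; and that in the one-sided case connectedness forces $m=2$ with the nearby surface equal to the double cover. But the convergence produced by the flow (Theorem \ref{thm:IlReg}, i.e.\ Ilmanen/Simon sheeting) is only multi-sheeted \emph{away from finitely many bridge points}: near those points the sheets may be joined by small necks, so a \emph{connected} surface can converge as varifolds, and smoothly away from the bridge points, to a two-sided surface with multiplicity $m\geq 2$ (the paper's own example in the proof of Corollary \ref{existence2} is a connected torus converging to a sphere with multiplicity $2$), and likewise to a one-sided limit with extra necks that change the topology of the nearby surface away from the genuine double cover. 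Since Conjecture \ref{conj_multi1} is open and higher-multiplicity convergence even to stable limits genuinely occurs, your "global graph decomposition from $C^k$-closeness" step is not available, and ruling out the bridge-point/higher-multiplicity scenario is exactly where the hypotheses of the corollary must be used. The paper's proof devotes most of its effort to this: at a bridge point it isolates the ``top'' annulus of the sheeted structure, observes that a boundary curve $\gamma$ of this annulus bounds an embedded disc in $N$ disjoint from the surface, and shows (splitting into cases according to whether $\gamma$ is nonseparating, separates into two positive-genus pieces, or the limit is a sphere/projective plane, the last cases using $\pi_2(N)=0$) that $\gamma$ would be a compressing curve, contradicting algebraic incompressibility. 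Without some argument of this kind your dichotomy in Step (ii) does not follow.

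A secondary, smaller issue is in Step (i): besides neck-pinches splitting off spheres and shrinking spherical components, the surgery approximation may discard an entire high-curvature component diffeomorphic to a torus bounding a solid torus; to rule out the main genus-$g$ component disappearing this way (and hence extinction) you again need incompressibility (the meridian disc of the solid torus would compress it), a case your argument does not mention. Apart from that, your Step (i) is in the same spirit as the paper's first claim, which is proved by a least-time argument along the surgery-approximating flow showing every surgery curve that bounds a disc in $N$ must already bound one on the surface.
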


By double cover of an embedded surface $P \subset N$, here we mean specifically a surface which is the boundary of a tubular neighborhood of $P$ in $N$. It is a fact (see, for instance, Lemma 3.6 in \cite{Hatcher07_Notes3Mfd}) that every class of $H_2(N,\Z)$ can be represented by an orientable properly embedded surface $M$. Recalling the Thurston norm $||\sigma||_{T}$ of $\sigma\in H_2(N,\Z)$ is the smallest attained genus of embedded surfaces homologous to $\sigma$. We see any such surface $M$ realizing the norm must be algebraically incompressible. If $||\sigma||_{T} \geq 1$ and if we suppose that $\pi_2(N)$ is trivial, we can apply the Corollary above, and claim that the minimal surface we obtain realizes the Thurston norm. We can record the discussion above as follows:
\begin{cor}\label{existence0}
   Suppose $\sigma\in H_2(N, \Z)$, with $\pi_2(N)$ trivial. Then either there exists an embedded oriented stable minimal surface $\Sigma$ in the class of $\sigma$ that realizes the Thurston norm of $\sigma$ or there is an element realizing the Thurston norm which is the double cover of an embedded nonoriented stable minimal surface. 
\end{cor}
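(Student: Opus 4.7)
The plan is to reduce to Corollary \ref{existence3} via the representability of homology classes by genus-minimizing, algebraically incompressible surfaces, essentially as sketched in the paragraph preceding the statement. The case $\sigma = 0$ is vacuous (take $\Sigma = \emptyset$), so I would assume $\sigma \neq 0$.

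First, by Lemma 3.6 of \cite{Hatcher07_Notes3Mfd} some orientable properly embedded surface represents $\sigma$, and I would select among these a representative $M$ of minimal total genus, so that $\mathrm{genus}(M) = ||\sigma||_{T}$. Since $\pi_2(N)$ is trivial, every embedded $2$-sphere in $N$ is null-homotopic and hence null-homologous, so I may discard any sphere components of $M$ without affecting its homology class; in particular $||\sigma||_T \geq 1$ and each remaining component has positive genus. The standard loop-theorem plus surgery argument then shows $M$ is algebraically incompressible: an essential simple closed curve in a component of $M$ that is null-homotopic in $N$ would bound an embedded disk in $N$ by Dehn's lemma, along which compression (two-sided since $M$ is oriented in an oriented ambient) would produce an oriented representative of $\sigma$ of strictly smaller total genus, contradicting minimality.

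With $M$ orientable, algebraically incompressible, and $\pi_2(N) = 0$, I would apply Corollary \ref{existence3} componentwise. In the first alternative the component is homotopic to an orientable stable minimal surface, which is therefore homologous and of the same genus; assembling these across all components yields an embedded oriented stable minimal surface representing $\sigma$ and realizing the Thurston norm. In the second alternative, a component of genus $g$ is homotopic to the double cover of a stable minimally embedded nonorientable surface diffeomorphic to the connect sum of $g+1$ projective planes; an Euler-characteristic computation, $2(2-(g+1)) = 2-2g$, confirms that this double cover has genus $g$, so it also represents $\sigma$ and realizes the Thurston norm. Either outcome matches the stated dichotomy.

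I do not anticipate a genuinely hard step, since the heavy lifting is done by Corollary \ref{existence3}; the bookkeeping to watch is simply that the surgery reducing genus stays in the oriented category (which works because $M$ is two-sided) and that the genus count through the double cover comes out correctly, both of which are entirely routine.
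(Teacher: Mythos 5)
Your argument is essentially the paper's own proof: there the corollary is recorded as an immediate consequence of the discussion preceding it, namely take an orientable embedded representative of $\sigma$ of least total genus, observe that such a genus-minimizing surface is algebraically incompressible and (since $\pi_2(N)$ is trivial) of genus at least one, and then apply Corollary \ref{existence3}. Your extra bookkeeping (discarding null-homologous sphere components and the Euler-characteristic check that the double cover of a connect sum of $g+1$ projective planes has genus $g$) just makes explicit what the paper leaves implicit, so the two proofs coincide.
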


Finally, we note that Theorem \ref{longterm} lets us substitute the tightening process in minmax for mean curvature flow, at least in some cases. A major issue in applying the flow to a sweepout is that the flow of a sweepout may not remain a sweepout, but this can be compensated for if the flows are not nonfattening. In particular, we show the following:
\begin{cor}\label{existence5}  Suppose $N$ is diffeomorphic to $S^3$. Then $N$ contains a minimal embedded $S^2$. 
\end{cor}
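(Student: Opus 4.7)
The plan is to run a min-max argument in which mean curvature flow substitutes for the standard tightening procedure. First, I would fix a sweepout $\{\Phi_0(t)\}_{t \in [-1,1]}$ of $N$ by embedded $2$-spheres degenerating to points at $t = \pm 1$; concretely, since $N \cong S^3$, one may take $\Phi_0$ to be the image of the suspension sweepout associated to a Heegaard decomposition, whose induced map to $N$ has mod-$2$ degree one. Let
\[
W := \inf_{\Phi \sim \Phi_0} \max_{t \in [-1,1]} \area(\Phi(t))
\]
denote the associated width, which is strictly positive by a standard isoperimetric argument. I would choose a minimizing sequence of sweepouts $\Phi^k$ in the class of $\Phi_0$ with $\max_t \area(\Phi^k(t)) \to W$, and perturb each slice so that its forward MCF is almost regular and nonfattening (a generic condition by Bamler--Kleiner), while preserving the sweepout structure and the area bound up to $1/k$.

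Next, I would run MCF from every slice simultaneously. Because the flows are nonfattening, the resulting family $\Phi^k_s$, with slices whose flow has gone extinct replaced by their shrinking points, remains a continuous sweepout in the same class as $\Phi_0$; monotonicity of area gives $\max_t \area(\Phi^k_s(t)) \le \max_t \area(\Phi^k(t))$. The topological nontriviality of $[\Phi_0]$ forces some slice of each $\Phi^k$ to have eternal MCF: otherwise, letting $T(t)$ denote the extinction time of the slice at parameter $t$, the supremum $T_k := \sup_t T(t) < \infty$ would be finite, and $\Phi^k_{T_k}$ would be a null sweepout consisting only of points, contradicting the nontriviality of the class. Since $\max_t \area(\Phi^k_s(t)) \ge W$ for every $s$, I may furthermore choose $t_k$ with eternal flow satisfying $\liminf_{s \to \infty} \area(\Phi^k_s(t_k)) \ge W - 1/k$.

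Applying Theorem \ref{longterm} to each eternal flow $\Sigma^k_s := \Phi^k_s(t_k)$ yields, along a subsequence $s^k_i \to \infty$, varifold convergence to a minimal surface configuration $V^k = \sum_j m_j^k \Sigma_j^k$ of total mass at least $W - 1/k$. Extracting a further varifold subsequential limit produces $V_\infty = \sum_j m_j \Sigma_j$, a configuration of disjoint embedded minimal surfaces of total mass $W$. Since $N \cong S^3$ is simply-connected, every component $\Sigma_j$ is two-sided; combined with the lower semicontinuity of genus for almost regular MCF long-time limits starting from $2$-spheres (cf.\ Proposition \ref{limit_topology} together with the classification of generic singularities of spheres, which are shrinking spheres or neckpinches), every $\Sigma_j$ is itself a $2$-sphere. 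Hence $V_\infty$ contains an embedded minimal $S^2 \subset N$, as desired.

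The main obstacle is making the nonextinction argument quantitatively compatible with min-max: one needs a semicontinuity property for $t \mapsto T(t)$ (expected from continuity of nonfattening flows in the initial data in Hausdorff distance) together with a careful selection of eternal slices ensuring the area bound $W$ is actually preserved in the long-time limit rather than collapsing below it. A secondary difficulty is the topological identification of the limit components as $2$-spheres, which rests on the genus lower-semicontinuity statement indicated above for almost regular flows and the simple connectivity of $N$.
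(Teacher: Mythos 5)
Your overall strategy -- using mean curvature flow as the tightening map in a one-parameter min-max scheme -- is the same one motivating the paper's proof, but your argument has a genuine gap at its load-bearing step: you assert that after flowing every slice, ``the resulting family $\Phi^k_s$, with slices whose flow has gone extinct replaced by their shrinking points, remains a continuous sweepout in the same class as $\Phi_0$.'' This is exactly the point the paper flags as unresolved and deliberately avoids. To know that the flowed family is still an admissible sweepout (so that $\max_t\area(\Phi^k_s(t))\ge W$, and so that a family of points at time $T_k$ would contradict nontriviality) you would need, for every fixed $s$, regularity of all the slices simultaneously in the parameter $t\in[-1,1]$; almost-everywhere-in-time regularity of each individual almost regular flow does not give this, since the parameter interval is uncountable, and the known structure theory does not even rule out singular sets larger than finitely many points in a slice at a fixed time, which already violates the usual admissibility condition for sweepouts. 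Your nonextinction argument and your width estimate both rest on this unproved preservation of the sweepout structure, and your closing remark about needing semicontinuity of $t\mapsto T(t)$ identifies a second ingredient you have deferred rather than supplied. A smaller point: the initial generic perturbation of every slice is both problematic (perturbing uncountably many leaves while keeping continuity of the family) and unnecessary, since genus-zero initial data automatically has only mean convex singularities, hence nonfattening, unique almost regular flows.

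The paper sidesteps the sweepout-preservation issue entirely. It works with the extended-real-valued extinction-time function $E(s)$ of the individual leaves and proves it is (sequentially) continuous, using nonfattening of sphere flows, Brakke compactness, the Colding--Minicozzi description of the singular set at the extinction time, and the clearing-out lemma. If $\sup_s E(s)=T$ were finite it would be attained at some interior $s_0$; just before time $T$ the slice $(\Sigma_{s_0})_{t^*}$ is mean convex by the mean convex neighborhood theorem, and by Brakke regularity and pairwise disjointness (avoidance, Theorem \ref{ambient_avoidance}) nearby slices are graphs on either side of it, hence their flows survive strictly past $T$ -- contradicting the definition of $T$. So some leaf has an eternal flow, and Theorem \ref{longterm} together with genus monotonicity, orientability in the simply connected $S^3$, and Proposition \ref{limit_topology} produces the embedded minimal sphere, with no need to show the flowed family remains a sweepout and no quantitative use of the width beyond motivation. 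If you want to rescue your version, you would need to either prove the sweepout-preservation statement (which is open as far as the paper is concerned) or replace it with a local argument of the above type.
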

Originally, this result was proved by Simon and Smith using minmax theory, see also \cite{ColdingDeLellis03_minmax}. As to the actual best result known, we point out that recently Haslhofer and Ketover in \cite{HaslhoferKetover19_Min2Sphere} showed there are at least two minimal spheres in $S^3$ with a bumpy metric, and very recently Zhichao Wang and Xin Zhou in \cite{WangZhou23_4Sphere} proved that there are at least $4$ embedded minimal spheres in $S^3$ equipped with either a bumpy or Ricci positive metric. We expect that if Conjecture \ref{conj_multi1} is true, then the mean curvature flow can also be used to recover their results. We emphasize that in the above, we are not applying Theorem \ref{pw_thm} and the minimal surface above is potentially unstable, as must be the case when, for instance, $N$ is Ricci positive.

We conclude the introduction with a remark on the compactness assumptions made throughout. When the ambient manifold is not closed, but merely complete with uniform bounded curvature and injectivity radius lower bound, Theorems \ref{smoothfate} and \ref{pw_thm} remain valid, if we include the possibility that the flow can escape from any compact set. For example, if $N$ is $S^2\times\R$ with an infinite long trumpet shape (i.e. a cusp) in one direction, the flow of an appropriate $S^2$ section of it will escape to spatial infinity/clear out, neither terminating nor converging to a minimal surface. It seems to be a more complicated manner yet to generalize our results to the case $M$ is allowed to be noncompact (say properly embedded in a noncompact $N$), one very basic reason being that the area can be infinite. As before, the flow may clear out in the manner of the example above, but it seems that in some cases, if the flow doesn't clear out or go extinct, one could show it must flow to a minimal surface. For a basic example, a well known consequence of Ecker and Huisken \cite{EckerHuisken89_EntireGraph} says that uniformly Lipschitz graphs of bounded height in $\R^n$ must converge to flat, hence minimal, planes.

\subsection*{Acknowledgments} A.M. thanks Alec Payne and Felix Schulze for their interest and helpful comments. In the course of preparing the article he was supported by CPH-GEOTOP-DNRF151 from the Danish National Research Foundation and CF21-0680 from the Carlsberg Foundation (via GeoTop and N.M. M{\o}ller respectively) and is grateful for their assistance. A.S. is supported by the AMS-Simons travel grant. He is grateful to the Copenhagen Centre for Geometry \& Topology (GeoTop) at the
University of Copenhagen, especially N.M. M{\o}ller, for the invitation to visit in Spring 2024 and participate in the meeting "Masterclass:
Recent Progress on Singularity Analysis and Applications of the Mean Curvature Flow;" part of this work was initiated during the visit. The authors also thank the anonymous referees for their careful reading and helpful feedback, which helped improve the exposition of the article.

\section{Background and preliminary lemmas}\label{prelim}

Let $N^{n+1}$ be a Riemannian manifold and $X: M \to N^{n+1}$ be an embedding of a manifold $M^n$ realizing it as a $2$--sided smooth closed hypersurface of $N$, whose image by abuse of notation we also refer to as $M$. Then the mean curvature flow $M_t$ of $M$ is given by (the image of) $X: M \times [0,T) \to N^{n+1}$ satisfying the following: 
\begin{equation}\label{MCF_equation}
\frac{dX}{dt} = \vec{H}, \text{ } X(M, 0) = X(M)
\end{equation}

An immediate consequence of the first variation formula and the evolution equation above is that the mean curvature flow is the gradient flow of the area functional, in the sense that $\frac{d}{dt}\text{Area}(M_t) = - \int_{M_t} |\vec H|^2 d\mu_t$, where $\mu_t$ is the area measure; from this one sees the mean curvature is well adapted to finding minimal surfaces, which are critical points of the area functional.

The first obstruction to the construction of minimal surfaces using mean curvature flow is singularity formation: as a highly nonlinear system, mean curvature flow can develop finite time singularities; then it is even unclear how to continue the flow after the singular time. To overcome this issue, there are three major weak flow approaches: 
\begin{itemize} 
\item The Brakke flow.
\item The weak set flow and level set flow.
\item The flow with surgery.
\end{itemize}

The \textbf{Brakke flow}, introduced by Brakke in his thesis \cite{Brakke78}, is a geometric measure-theoretic definition in terms of (typically, integral) varifolds that satisfy the eponymous Brakke inequality. Here we present a definition essentially due to Ilmanen \cite{Ilmanen94_EllipReg}, and the expository is from \cite{BamlerKleiner23_Multiplicity1}. An $n$-dimensional Brakke flow defined on the time-interval $I\subset\R$ in a $(n+1)$-dimensional manifold $(N,g)$ is a family of Radon measures $(\mu_t)_{t\in I}$ such that:
\begin{itemize}
    \item For a.e. $t\in I$, the measure is integer $\cH^n$-rectifiable and the associated varifold has locally bounded first variation with variational vector $\vec H$ in $L^1$.
    \item For any compact set $K\subset N$, and any $[t_1,t_2]\subset I$, 
    \[
    \int_{t_1}^{t_2}\int_K |\vec H|^2 d\mu_t dt<+\infty.
    \]
    \item For any $[t_1,t_2]\subset I$ and $u\in C_c^1(N\times[t_1,t_2])$,
    \begin{equation}\label{eq:BrakkeF}
        \left.\int_N u(\cdot,t) d\mu_t \right|_{t=t_1}^{t=t_2}
        \leq \int_{t_1}^{t_2}\int_N (\pr_t u+\nabla u\cdot \vec H-u|\vec H|^2)d\mu_t dt.
    \end{equation}
\end{itemize}

One can quickly see that the Radon measures associated to a smooth/classical mean curvature flow, that is, a family of smooth manifolds satisfying \eqref{MCF_equation}, is a Brakke flow. A Brakke flow suffers from a serious problem: as \eqref{eq:BrakkeF} is only an inequality, the whole connected components of (the support of) a Brakke flow are allowed to disappear instantaneously! In the same monograph, Ilmanen \cite{Ilmanen94_EllipReg} (see also White \cite{White09_CurrentsVarifolds})
 gives a construction via elliptic regularization of a "well-behaved" Brakke flow out of smooth initial data for which these issues don't occur. Such a Brakke flow is known to be:
\begin{itemize}
    \item unit-regular: if the density of the Brakke flow at a spacetime point is $1$, then it is a smooth mean curvature flow in a spacetime neighborhood.
    
    \item cyclic mod $2$: suppose the unique associated rectifiable mod-$2$ flat chain of $\mu_t$ is $V(t)$, then $\pr[V(t)]=0$ for a.e. $t\in I$.
\end{itemize}

Even amongst such well-behaved Brakke flows, there happens to be an issue of uniqueness, which brings us to the level set flow which originated from the viscosity method \cite{EvansSpruck91, ChenGigaGoto91_LSF}. Later, Ilmanen \cite{Ilmanen92_LSF} and White \cite{White95_WSF_Top} introduced a purely set-theoretic definition using the avoidance principle. The classical avoidance principle says that two disjoint hypersurfaces in a manifold $N$ will stay disjoint, at least as long as one of them is compact. The \textbf{weak set flow} is defined to be a closed subset $\cM$ of $N\times\R$ such that if a smooth mean curvature flow does not intersect $\cM\cap\{t=t_0\}$, then it is disjoint from $\cM\cap \{t=t_1\}$ for all $t_1>t_0$. White \cite{White24_Avoidance} proved the following avoidance principle for weak set flows.

\begin{thm}\label{ambient_avoidance} Suppose that $N$ is a complete, connected Riemannian manifold with positive injectivity radius such that $|\nabla^k \text{Riem} |$ is bounded for each nonnegative integer $k$. Let $\Lambda$ be a lower bound for the Ricci curvature of $N$, and suppose that $t \in [0, \infty) \to X(t), Y(t)$ are weak set flows in $N$. Then 
\begin{equation}\label{dist}
e^{-\Lambda t} d(X(t), Y(t)) 
\end{equation} 
is an increasing function of $t$. 
\end{thm}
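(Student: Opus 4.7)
The plan is in two stages: first I would establish the differential inequality $\phi'(t) \geq \Lambda \phi(t)$ for $\phi(t) := d(X(t), Y(t))$ in the setting where $X, Y$ are smooth mean curvature flows, then extend to weak set flows by inserting smooth MCF barriers and invoking the defining avoidance property. Integrating via Gronwall then gives monotonicity of $e^{-\Lambda t}\phi(t)$.

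For the smooth case, fix $t$ with $\phi(t) < \operatorname{inj}(N)$, pick a minimizing pair $(p, q) \in X(t)\times Y(t)$ realizing $\phi(t)$, and let $\gamma:[0,\phi(t)]\to N$ be the unit-speed minimizing geodesic from $p$ to $q$. Criticality of $(p,q)$ forces $\gamma$ to meet $X, Y$ orthogonally at its endpoints; setting $\nu^X := \gamma'(0)$, $\nu^Y := -\gamma'(\phi(t))$ and writing $\vec H^X = H^X\nu^X$, $\vec H^Y = H^Y\nu^Y$, the first variation of arc length under the MCF velocities of the endpoints yields
\[
\phi'(t) = -H^X(p) - H^Y(q).
\]
The Ricci lower bound enters through the second variation: for an orthonormal family of admissible variations of $\gamma$ (built from a parallel frame transverse to $\gamma'$, with endpoints constrained to $X$ and $Y$), the minimizing property of $\gamma$ forces each $L_i''(0)\geq 0$. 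Tracing over the frame, the boundary terms of the index form combine to $-H^X(p) - H^Y(q)$ while the bulk curvature term is bounded by $-\int_0^{\phi(t)}\Ric(\gamma',\gamma')\,ds \leq -\Lambda \phi(t)$, giving $H^X(p) + H^Y(q) \leq -\Lambda \phi(t)$. Combining with the first variation formula gives $\phi'(t) \geq \Lambda \phi(t)$ in the smooth case.

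To extend to weak set flows, pick a smooth compact hypersurface $S_{t_0}$ (say a geodesic sphere or tube) placed between $X(t_0)$ and $Y(t_0)$ along a minimizing segment, and flow it by smooth MCF to $S_t$ on a short time interval guaranteed by the bounded-geometry hypothesis. By the weak set flow defining property, $X(t), Y(t)$ cannot cross $S_t$ during its existence and hence remain separated by it. A "mixed" version of the smooth inequality comparing a weak set flow to a smooth MCF can be extracted by running the above argument with parallel smooth MCF translates $S_t^r$ of $S_t$ at various radii $r$, and noting that $X(t)$ must miss each $S_t^r$ with $r$ less than $d(X(t_0), S_{t_0})$. This gives $d(X(t), S_t) \geq d(X(t_0), S_{t_0}) e^{\Lambda(t-t_0)}$ and similarly for $Y$. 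Since $S_t$ separates them, $d(X(t), Y(t)) \geq d(X(t), S_t) + d(S_t, Y(t))$; optimizing the placement of $S_{t_0}$ so that $d(X(t_0), S_{t_0}) + d(S_{t_0}, Y(t_0))$ approaches $\phi(t_0)$, and iterating on short time subintervals, gives the desired monotonicity.

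The main technical obstacle I expect is the control of the correction terms in the second variation computation: since parallel transport along $\gamma$ need not map $T_pX$ to $T_qY$, the variations used in the index form cannot be taken strictly parallel, and the resulting extra nonnegative $\int|\nabla_{\gamma'} V_i|^2\,ds$ terms must be shown not to weaken the leading $\Lambda\phi$ inequality. This should be handled by optimizing the choice of admissible variations (minimizing kinetic energy subject to the endpoint constraints) and tracking that the resulting correction is of lower order. A secondary subtlety is the case $\phi(t) \geq \operatorname{inj}(N)$, where minimizing geodesics are non-unique; there one runs the argument separately at each minimizer and takes the infimum, or decomposes $\gamma$ into short sub-segments lying within the injectivity radius and accumulates the inequality over them.
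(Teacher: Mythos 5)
You should first note that the paper does not actually prove this statement: Theorem \ref{ambient_avoidance} is quoted from White \cite{White24_Avoidance}, so there is no in-text proof to compare against, and your proposal has to stand on its own.

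The smooth-flow half of your plan is the standard and correct computation. One remark: the ``main technical obstacle'' you flag is vacuous. Because the minimizing geodesic $\gamma$ meets $X(t)$ and $Y(t)$ orthogonally at its endpoints, $T_pX(t)$ and $T_qY(t)$ are exactly the orthogonal complements of $\gamma'(0)$ and $\gamma'(\ell)$, and parallel transport along $\gamma$ carries one onto the other; hence genuinely parallel variations are admissible, the $\int|\nabla_{\gamma'}V_i|^2$ terms vanish, and no optimization of variation fields is needed. The real (routine) care in this half is elsewhere: $\phi(t)=d(X(t),Y(t))$ is only Lipschitz and need not be attained when the flows are noncompact, so $\phi'\geq\Lambda\phi$ must be interpreted via Dini derivatives at (almost-)minimizing pairs, and the injectivity radius plays no role since $\gamma$ is minimizing by construction.

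The genuine gap is the reduction from weak set flows to this smooth computation, which is precisely the content of the theorem. Your scheme hinges on the interposed compact hypersurface $S_t$ separating $X(t)$ from $Y(t)$, so that $d(X(t),Y(t))\geq d(X(t),S_t)+d(S_t,Y(t))$. A geodesic sphere or tube ``placed between'' the two flows along a minimizing segment does not separate them in $N$: paths from $X$ to $Y$ can simply go around it, so the additivity inequality has no justification; and when $X$, $Y$ are noncompact (which the theorem permits, and which matters for level set flows of noncompact data) no compact hypersurface can separate them at all. The same defect recurs in your ``mixed'' step: the avoidance property only gives $X(t)\cap S^r_t=\emptyset$, which yields no lower bound on $d(X(t),S_t)$ unless you also show that $S^r_t$ separates $X(t)$ from $S_t$ and that $X(t)$ remains on the correct side -- disjointness is qualitative, while the theorem is a quantitative rate, and bridging that is exactly what a proof must supply. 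Further unaddressed points: the parallel translates $S^r_{t_0}$ are smooth equidistant hypersurfaces only up to the focal/normal injectivity radius of $S_{t_0}$; each smooth barrier flow exists only for a short time, so the iteration needs a uniform lower bound on these existence times and a rule for re-choosing barriers after singularities. Since the smooth--smooth estimate is classical, what remains in your write-up is an outline of intent rather than a proof of the weak-flow statement; White's argument handles this passage with a substantially more careful barrier construction than the one you sketch.
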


In particular, note that for $\Lambda > 0$ two flows don't merely stay disjoint but actively "repel" exponentially quickly -- this will play an important role in the proof of Theorem \ref{Rc_thm} below.

Indeed, the support of a Brakke flow is a weak set flow, as shown in Ilmanen's monograph. Although weak set flows may not be unique starting from a given hypersurface, there is a canonical weak set flow associated to any initial data called the \textbf{level set flow}: for a given set $A$ is the envelope of all families of sets initially agreeing with $A$ that satisfy the avoidance principle. As an aside, we point out that, in fact, the level set flow can even be defined this way for closed initial data that is not necessarily a regular hypersurface. While the existence and uniqueness are guaranteed, the level set flow of initial data may develop interior even if it is smooth, as first noted by Ilmanen and White \cite{White02ICM} (and recently appeared in \cite{IlmanenWhite24_Fattening}, see also related works \cite{AngenentIlmanenVelazquez17_fattening, LeeZhao24_MCFconical, Ketover24_Fattening}) -- this fattening can be seen as an indication of nonuniqueness for level set flows. To study the level set flow even in the presence of fattening Hershkovits and White \cite{HershkovitsWhite20_Nonfattening} introduced the notions of innermost/outermost flows. Given a closed hypersurface $\Sigma \subset \R^3$ that is the boundary of a domain $\Omega$, we can define:
\begin{itemize}
    \item The innermost flow $\cM^-$ is the boundary of the level set flow in $N\times\R$ generated by $\Omega\times\{0\}$;
    \item The outermost flow $\cM^+$ is the boundary of the level set flow in $N\times\R$ generated by $(N\backslash\Omega)\times\{0\}$.
\end{itemize}
Hershkovits and White noticed that if the level set flow $\cM$ starting from $\Sigma$ fattens, then $\cM^-$ and $\cM^+$ are different. It is expected that if the level set flow fattens, the innermost/outermost flows are natural candidates to serve as the canonical representatives of the weak set flow.

It is not immediate to define inner and outermost flows in general $3$--manifolds from a $2$--sided surface $M$ because such a surface need not be separating, as a cross section of $S^2 \times S^1$ illustrates. In the sequel we will define such flows as the following: for a choice of normal vector $\nu$ consider the domains $\cM^-$ and $\cM^+$ bounded by $M$ and $M \pm \epsilon \nu$ respectively, for some small $\epsilon > 0$, where $M \pm \epsilon \nu$ are surfaces that are generated by pushing $M$ in $\epsilon\nu$ direction by the exponential map. Then we define the inner and outermost flows to be the boundary component of the level set flows of $\cM^-$,  $\cM^+$, respectively, emanating from $M$.

The third weak flow is the \textbf{mean curvature flow with surgery}, where one preemptively cuts out regions developing high curvature, classifies them topologically, and then continues the flow in a piecewise manner, see \cite{Head13_MCF2convex, Lauer13_MCFSurgery, BrendleHuisken16_MCFSurgeryR3, HaslhoferKleiner17_MCFsurgery, Daniels-Holgate22_SurgeryApprox}. Locally, the high curvature regions will be modeled on round spheres or cylinders in coordinate patches, which will bound solid balls/cylinders, respectively. Regions where high curvature transitions into low curvature (i.e. "neckpinches") the surface are modeled on cylinders and at these points one "cuts" the neck at an appropriate spot and places in caps/discs of controlled geometry, in particular so that the remaining low curvature regions have curvature upper bounded by a controlled constant (which is amongst the analytical difficulties in establishing the flow with surgery). We note that the high curvature regions, after cap placement, will be homeomorphic to either $S^n$ or $S^{n-1} \times S^1$, which makes the flow with surgery a useful tool in topology. This particular consequence won't be so relevant in this work, but the well controlled, concretely understandable change of topology across surgeries will be helpful in the sequel. 

\subsection{Almost regular flow and generic approximation}
While these notions of weak flows seem quite different, in $3$--manifolds there is a relatively complete picture to relate all of them. In \cite{BamlerKleiner23_Multiplicity1}, Bamler and Kleiner introduced a new notion of weak flow called \textbf{almost regular flow}, which are well behaved Brakke flows in the sense described above, along with an extra integrability condition on what they call scale functions, which measure the size of neighborhoods in which the flow is smooth. We refer the readers to \cite{BamlerKleiner23_Multiplicity1} for the definition and properties of almost regular flows. Here, we only state properties of almost regular flows that we reference:

\begin{enumerate}
\item Generic flows, i.e., ones which only encounter mean convex singularities (see discussion and references below), are almost regular. 
\item The innermost/outermost flows that start from a closed smooth embedded surface can be approximated by generic flows, and as a consequence, are almost regular flows for $t\geq 0$. In particular, if the level set flow from a closed smooth embedded surface is nonfattening, then it is almost regular.
    \item An almost regular flow is regular for a.e. $t\in I$.
    \item (Lemma 2.7 in \cite{BamlerKleiner23_Multiplicity1}) If $\cM$ is almost regular, then for any test function $u\in C_c^1(N\times[t_1,t_2])$ where $[t_1,t_2]\subset \text{Int} I$, $t\to\int_N u(\cdot,t)d\mu_t$ is continuous and the equality holds in \eqref{eq:BrakkeF}. 
\end{enumerate}

While the almost regular flows have many nice properties, they still possibly develop complicated singularities that obscure the possible topological change of the flow through them. On the other hand, as we have described before, the mean curvature flow with surgery has a clear description of the topological change of the flow. Therefore, we would like to use mean curvature flow with surgery to approximate the almost regular flows we use in the sequel, particularly in showing (some of) the applications mentioned in the introduction.

In order to use mean curvature flow with surgery to approximate the almost regular flows we use in practice, we first use a well behaved (unit regular, cyclic) Brakke flow with only cylindrical and spherical singularities to approximate them. Such flows we call generic. It has been conjectured by Huisken and Angenent-Ilmanen-Chopp that mean curvature flow with generic initial data can only have cylindrical and spherical singularities, and recently, there has been various progress on this conjecture, \cite{ColdingMinicozzi12_generic, CCMS20_GenericMCF, chodoshchoischulze2023mean, SunXue2021_initial_closed, SunXue2021_initial_conical}. Using the result of Chodosh, Choi, Mantoulidis, and Schulze \cite{CCMS20_GenericMCF} and Chodosh, Choi, and Schulze \cite{chodoshchoischulze2023mean}, Bamler and Kleiner \cite{BamlerKleiner23_Multiplicity1} proved that a mean curvature flow starting from a generic closed embedded surface in a three-manifold can only have cylindrical and spherical singularities. On the other hand, Daniels-Holgate \cite{Daniels-Holgate22_SurgeryApprox} proved that the mean curvature flows with surgeries can approximate a mean curvature flow with only cylindrical and spherical singularities, where the approximation is in the Hausdorff distance. In addition, this approximation is smooth away from the singular set. Combining all the ingredients above gives the following approximation result in $\R^3$:

\begin{prop}
\label{goodflow_ambient}
    The inner and outermost flows starting from a closed embedded surface in $\R^3$ can be approximated by mean curvature flows with surgeries in Hausdorff distance, and the approximation is smooth away from the singular set.
\end{prop}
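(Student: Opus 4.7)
The plan is to chain two existing approximation theorems. For the first step, given a closed embedded surface $M \subset \R^3$ and its inner/outermost Brakke flow $\cM$, one invokes the $\R^3$ genericity scheme of Bamler-Kleiner \cite{BamlerKleiner23_Multiplicity1}, which builds on the work of Chodosh-Choi-Mantoulidis-Schulze \cite{CCMS20_GenericMCF} and Chodosh-Choi-Schulze \cite{chodoshchoischulze2023mean}. This produces a sequence of smooth perturbations $M_k \to M$ whose associated unit-regular, cyclic mod-$2$ Brakke flows $\cM_k$ have only cylindrical and spherical singularities, and which converge to $\cM$ in the sense of Brakke flows. In particular, on compact time intervals the convergence is in Hausdorff distance of the spacetime supports, and smooth away from the singular set of $\cM$.

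For the second step, the approximation theorem of Daniels-Holgate \cite{Daniels-Holgate22_SurgeryApprox} says that any Brakke flow whose singularities are only cylindrical or spherical is itself a Hausdorff-distance limit of mean curvature flows with surgery $\cM_k^{j}$ as $j \to \infty$, with smooth convergence away from the singular set. Applying this to each $\cM_k$ produces a doubly-indexed family of surgery flows.

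A standard diagonal subsequence $\{\cM_k^{j(k)}\}$ then approximates $\cM$ in Hausdorff distance and smoothly off the singular set of $\cM$, giving the proposition. The principal obstacle is arranging the two approximations to be mutually compatible: the initial-data perturbation scale, the Hausdorff error coming from the genericity step, and the neck/cap scales governing the surgery approximation must be coupled so that both modes of convergence survive the extraction. This is essentially a bookkeeping argument — once one fixes a compact time interval $[0,T]$ bounded away from the extinction time and a neighborhood $U$ of the limiting singular set $\Sing(\cM)$, both approximations reduce to $\varepsilon$-closeness statements that can be iterated via a straightforward $\varepsilon/2$ argument. Some care is needed near $\Sing(\cM)$ itself and near the extinction time, where only Hausdorff (and not smooth) control is available, but this is exactly the weakened form of convergence asserted in the statement.
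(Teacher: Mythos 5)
Your proposal is correct and follows essentially the same route as the paper, which likewise obtains Proposition \ref{goodflow_ambient} by combining the Bamler--Kleiner approximation of the inner/outermost flows by generic flows having only spherical and cylindrical singularities (resting on \cite{CCMS20_GenericMCF, chodoshchoischulze2023mean}) with the surgery approximation of such flows due to Daniels-Holgate \cite{Daniels-Holgate22_SurgeryApprox}, then extracting a diagonal sequence. The paper in fact states this as an immediate consequence of those ingredients, so your added remarks on coupling the two approximation scales only make the bookkeeping explicit.
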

Note that in the classical case, i.e., $N = \R^{3}$, properly embedded hypersurfaces are automatically orientable. If $N$ is more generally simply connected, it is not hard to see this true by an intersection number argument, but we point out that it does not suffice if $N$ is merely orientable with as the example $\R P^2 \subset \R P^3$ shows. With this in mind, we claim that the following holds true in general closed $3$--manifolds: 
\begin{prop}
    Let $M$ be a closed, properly embedded $2$--sided surface in a closed $3$--manifold $N$. Then the inner and outermost flows of $M$, as defined above, can be approximated by mean curvature flows with surgeries in Hausdorff distance over any fixed time interval, and the approximation is smooth away from the singular set.
\end{prop}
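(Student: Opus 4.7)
The approach is to adapt the strategy of Proposition \ref{goodflow_ambient} to the Riemannian setting, proceeding in three steps: first approximate the innermost (resp.\ outermost) flow by a sequence of almost regular flows coming from generic perturbations of $M$; second invoke (the adaptation to closed $3$--manifolds of) the Bamler--Kleiner/CCMS/CCS generic regularity theorem to conclude that each such flow has only spherical and cylindrical singularities; and third invoke Daniels-Holgate's surgery approximation to approximate each of those flows by a flow with surgery. A diagonal extraction then yields the desired approximation.

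Concretely, for the innermost flow $\cM^-$ define the small one-sided thickenings $M_{\epsilon} := M + \epsilon \nu$ (via the exponential map along $\nu$) and let $\cM^-_{\epsilon}$ denote the component of the level set flow of the domain bounded by $M$ and $M_\epsilon$ whose boundary has $M$ as initial data. By the (Riemannian) elliptic regularization construction of Ilmanen, there exist well-behaved Brakke flows starting from $M$ and from $M_\epsilon$, and as $\epsilon \downarrow 0$ suitable sub-level boundaries converge in Hausdorff distance to $\cM^-$. After a further small smooth perturbation of each $M_\epsilon$ in the normal direction, the resulting Brakke flow may be arranged to be generic in the Bamler--Kleiner sense, and hence almost regular; this step of their construction uses only perturbations by small normal graphs on $M_\epsilon$ together with the unit-regular and cyclic-mod-$2$ structure of the elliptic regularization, both of which are intrinsically local and transfer to a Riemannian background with only the standard extra curvature terms in the evolution equations.

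For each such generic almost regular flow, tangent flows at singular points are self-shrinkers in $\R^3$ (by parabolic rescaling the ambient metric becomes Euclidean to any order), so the generic singularity classification of Chodosh--Choi--Mantoulidis--Schulze and Chodosh--Choi--Schulze, together with Bamler--Kleiner's refinement for embedded surfaces in $3$--space, applies to show that only spherical and cylindrical tangent flows occur. This is the key input that lets us appeal to the construction of Daniels-Holgate \cite{Daniels-Holgate22_SurgeryApprox}: near each singular time the flow admits a cut-and-cap surgery approximation localized in small coordinate balls where the ambient metric is Euclidean up to small error. Performing these surgeries in the ambient $N$ yields mean curvature flows with surgery whose Hausdorff distance from the given generic flow tends to zero smoothly away from the singular set.

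To conclude, we diagonalize: choose $\epsilon_k \downarrow 0$ and a generic perturbation of $M_{\epsilon_k}$ whose almost regular flow approximates $\cM^-$ on $[0,T]$ in Hausdorff distance to within $1/k$, and then choose a surgery approximation of that flow to within $1/k$. The resulting sequence of surgery flows approximates $\cM^-$ in Hausdorff distance over $[0,T]$ and is smooth off the singular set; the argument for $\cM^+$ is identical. The main obstacle I expect is the bookkeeping required to transfer the entropy/index-stability arguments of the generic regularity theorem from $\R^3$ to $(N,g)$: although the analysis localizes to small scales where $g$ is Euclidean up to small error, one must verify that the (local) $F$-functional monotonicity and the classification of low-entropy shrinkers can be applied after parabolic rescaling to the Riemannian setting with only $o(1)$ error, and that this error does not interfere with the neck-detection and cap-placement thresholds in Daniels-Holgate's construction.
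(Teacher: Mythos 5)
Your overall route coincides with the paper's: produce generic (hence almost regular) flows in the curved ambient setting by small perturbations of one-sided pushoffs of $M$, show their singularities are only spherical or cylindrical, apply Daniels-Holgate's surgery approximation, and recover the inner and outermost flows by a one-sided approximation together with a diagonal argument. Two points you gloss over, however, are genuine gaps as written. First, ``after a further small smooth perturbation the resulting Brakke flow may be arranged to be generic'' is not a one-shot statement: the perturbation scheme of \cite{CCMS20_GenericMCF, chodoshchoischulze2023mean} removes one nongeneric \emph{multiplicity-one} singularity at a time, so you must argue that the process terminates after finitely many perturbations. The paper does this via genus monotonicity (Section 11 of \cite{CCMS20_GenericMCF}) together with Brendle's classification of genus-zero shrinkers \cite{Brendle16_genus0}. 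Moreover, higher-multiplicity tangent flows are not touched by that perturbation argument at all; one must separately check that the Bamler--Kleiner multiplicity-one theorem transfers to the curved background (the paper argues this by noting that after parabolic rescaling all relevant quantities evolve as in the flat case up to arbitrarily small errors, and their proof derives contradictory bounds, so small errors do not matter). Your phrase ``Bamler--Kleiner's refinement'' does not make clear that this is a distinct input doing distinct work.

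Second, the ``main obstacle'' you leave open --- whether the neck-detection and cap-placement thresholds and the mean-convex local structure survive the curved background --- is exactly what has to be supplied rather than flagged: the paper resolves it by invoking that the mean convex neighborhood theorem of \cite{ChoiHaslhoferHershkovits18_MeanConvNeighb} carries over to curved ambient manifolds (this is also what makes generic flows almost regular, beyond the unit-regular and cyclic structure you mention), and that the local estimates needed for surgery in a general ambient $3$--manifold are available from \cite{HaslhoferKetover19_Min2Sphere}; with these inputs Daniels-Holgate's approximation \cite{Daniels-Holgate22_SurgeryApprox} applies. Relatedly, for your diagonal extraction to converge to $\cM^-$ (resp.\ $\cM^+$) rather than to some other weak flow out of the perturbed surfaces, you need uniqueness of the almost regular flow from the perturbed data; in the paper this follows from mean convexity of the singularities together with the barrier argument of \cite{HershkovitsWhite20_Nonfattening, HershkovitsWhite23_Avoid}, and it is this uniqueness that lets the one-sided approximation argument of Section 7 of \cite{BamlerKleiner23_Multiplicity1} go through verbatim.
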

\begin{proof} The main point is to establish the existence of generic mean curvature flow ala \cite{CCMS20_GenericMCF, chodoshchoischulze2023mean}, and its subsequent approximation by surgery flows, then the statement about inner and outermost flows follows exactly as in the Euclidean case by an approximation argument as discussed in Section 7 of \cite{BamlerKleiner23_Multiplicity1}. As is well known, in a blowup limit, singularities will still be modeled on solitons in $\R^3$, so we mostly just supply a sketch of this argument. Noting that because $M$ is $2$--sided, one may meaningfully discuss one-sided perturbations of $M$, and the point made in the previous sentence, one readily sees from \cite{CCMS20_GenericMCF, chodoshchoischulze2023mean} that if one encounters a nongeneric singularity of multiplicity one that the initial data can be perturbed to avoid the singularity at that spacetime point. Because $M$ is a compact surface it has finite genus and so by genus monotonicity (see Section 11 of \cite{CCMS20_GenericMCF}) and the classification of genus $0$ shrinkers by Brendle \cite{Brendle16_genus0}, only finitely many (arbitrarily small) perturbations of the initial data are required to produce a generic flow up at least to the first time a high multiplicity tangent flow develops. Now all these generic flows, as shown in Section 7 of \cite{BamlerKleiner23_Multiplicity1}, are almost regular flows; note this uses the resolution of the mean convex neighborhood conjecture by Choi, Haslhofer, and Hershkovits \cite{ChoiHaslhoferHershkovits18_MeanConvNeighb}, which the authors note carries over in the curved ambient setting. Because under rescaling as in singularity analysis the ambient metric converges smoothly to the Euclidean metric, one can see by contradiction that the resolution of the multiplicity one conjecture applies in the curved ambient setting because all the relevant geometric quantities can be arranged to evolve as they do in the flat case up to arbitrarily small errors -- their argument at a high level is by showing in the case of high multiplicity convergence in a tangent flow that a certain quantity satisfies contradictory bounds so this suffices for our needs. As a result we get that for generic choice of data there will be a flow along which only mean convex singularities may develop for all time; by the resolution of the mean convex neighborhood conjecture \cite{ChoiHaslhoferHershkovits18_MeanConvNeighb} and the barrier argument of Hershkovits and White \cite{HershkovitsWhite20_Nonfattening, HershkovitsWhite23_Avoid}, it is unique among almost regular flows starting from $M$. The approximation theorem of Daniels-Holgate also applies in this setting; most importantly, besides the mean convex neighborhood conjecture, the required local estimates for surgery also hold in the general ambient setting as shown by Haslhofer and Ketover \cite{HaslhoferKetover19_Min2Sphere}. 
\end{proof}

We next present some properties of such almost regular flows, where $M$ is, as usual, a properly embedded $2$--sided surface in a given closed $3$--manifold $N$. 

\begin{lem}\label{lem:PreserveOriented}
Suppose that $M_t$ is a generic or inner/outermost flow out of $M$. Then $M_t$ will also be $2$--sided for all regular times $t$.
\end{lem}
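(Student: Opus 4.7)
The plan is to split the argument by flow type: for inner and outermost flows I would give a direct argument using that they are boundary components of evolving open sets, while for generic flows I would reduce to the mean curvature flow with surgery case via the Daniels--Holgate approximation and then argue topologically that surgery preserves 2--sidedness.

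First, on any maximal interval of regular times the flow is a smooth ambient isotopy, so 2--sidedness of $M_{t_0}$ propagates to $M_t$ for later regular $t$ as long as no singular time is crossed. Hence the content lies at regular times $t$ reached only after one or more singular times have been traversed. In the inner/outermost case, by construction $M_t$ is a boundary component of the time-$t$ slice of the level set flow of a domain $\cM^{\mp} \subset N$; at a regular time $M_t$ is then a smooth hypersurface that locally bounds an open subset of $N$, so the inward unit normal is globally defined and $M_t$ is 2--sided.

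For generic flows, by the surgery approximation of the preceding proposition, $M_t$ is the Hausdorff limit of a sequence of mean curvature flows with surgery starting from $M$ (or small 2--sided perturbations of it), with smooth convergence away from the singular set. Since 2--sidedness passes to smooth limits of embedded hypersurfaces, it suffices to check that each surgery flow is 2--sided at time $t$. The relevant surgery operations either (i) discard a connected component, or (ii) excise an embedded neck $S^1 \times (-\delta,\delta)$ from the surface and cap off the two new boundary circles by embedded disks $D^\pm \subset N$. Case (i) preserves 2--sidedness trivially. For case (ii), the pre-surgery unit normal $\nu$ is unchanged outside the surgery region; on each cap $D^\pm$ the normal bundle of the post-surgery surface is a real line bundle over a contractible disk, hence trivial, and the restriction of $\nu$ to a collar of $\partial D^\pm$ is a nowhere-zero continuous section that must lie in a single connected component of the complement of the zero section. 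It therefore extends continuously across $D^\pm$ to give a global unit normal on the post-surgery surface.

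The main difficulty is this neck-surgery step: one has to verify that the extension of $\nu$ across each cap is genuinely continuous with $\nu$ on the rest of the surface, which amounts to checking that $\nu$ selects a single side of the trivial normal line bundle along the cap boundary. This is precisely where 2--sidedness of the pre-surgery surface (as opposed to mere orientability) enters, since it guarantees a globally consistent choice of side on the neck before it is cut.
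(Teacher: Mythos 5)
Your treatment of generic flows is essentially the paper's argument: reduce to the Daniels--Holgate surgery approximation and use that the approximation is smooth at regular times; your cap-extension argument (normal bundle over a disk is trivial, so the unit normal on the collar of each surgery circle extends across the cap) just spells out the step the paper dismisses as easy, and it is correct. One caveat you share with the paper: ``2--sidedness passes to smooth limits'' needs the convergence at the regular time to be multiplicity-one graphical convergence (a sequence of spheres can converge smoothly as a double cover to a one-sided $\R P^2$), but this is supplied by the unit-regularity/smoothness of the limit flow at regular times, so it is not a defect relative to the paper's own proof.

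The genuine gap is in your separate, direct argument for the inner/outermost case. You claim that at a regular time $M_t$ is ``a smooth hypersurface that locally bounds an open subset of $N$, so the inward unit normal is globally defined.'' Being the topological boundary of an open set does not imply 2--sidedness: $\R P^2 \subset \R P^3$ is exactly the frontier of the open ball complementary to it, yet it is 1--sided. What your argument actually needs is that the level-set-flow slice of the domain $\cM^{\mp}$ has nonempty interior lying locally on exactly one side of $M_t$; if the slice has locally collapsed onto the surface (both local complementary components lie outside it), no preferred inward normal exists and the surface could a priori be 1--sided -- this is precisely the kind of degeneration one worries about when, e.g., the flow of the boundary of a tubular neighborhood of an embedded projective plane collapses toward it. You give no argument ruling out such collapse at a finite regular time, so this branch is incomplete as stated. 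The fix is cheap and is what the paper does: the approximation proposition applies to inner/outermost flows as well as to generic ones, so you can simply run your surgery-approximation argument in that case too and drop the direct argument.
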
 
\begin{proof} It's obvious if $M_t$ is smooth then it will remain $2$--sided, because the flow is an isotopy. In the case that $M_t$ has singularities, we first observe that it is easy to see for the approximating smooth flows $S_t$ to $M_t$ produced in \cite{Daniels-Holgate22_SurgeryApprox} that $S_t$ will remain orientable if $M$ is. The convergence of the approximating surgery flows is smooth during regular times, giving the claim. 
\end{proof}

An important subtlety to remember, though, as mentioned in the introduction, is that orientability is not necessarily inherited by the limit we discuss in the next section. In the next statement, we write $M\sim \sigma$ to indicate that $M$ is homologous to $\sigma$. 

\begin{lem}\label{lem:PreserveHomology}
 Suppose that $M_t$ is a generic or inner/outermost flow out of $M$, with $M\sim \sigma$ where $\sigma\in H_2(N,\Z)$. Then  $M_t \sim \sigma$ for all regular times $t\geq 0$.
\end{lem}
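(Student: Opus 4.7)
The plan is to use the surgery approximation established in the preceding Proposition to push the homology invariance through singular times. Between singular times the almost regular flow $M_t$ is a smooth mean curvature flow, so it is an ambient isotopy and preserves the homology class; thus the only thing to verify is invariance across singularities.

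To handle singular times, I would work with the approximating mean curvature flows with surgery $S_t^\epsilon$ produced in the previous Proposition, chosen so that $S_0^\epsilon$ is a small normal perturbation of $M$ (hence isotopic to $M$, so $[S_0^\epsilon] = [M] = \sigma$ in $H_2(N,\ZZ)$). Each surgery step consists of either (i) cutting a suitable $\epsilon$-neck, which is a cylinder $C\approx S^1\times[-1,1]$ embedded near a modeled cylindrical singularity, and replacing it by two smoothly glued-in disc caps $D_1\cup D_2$, or (ii) discarding a connected component modeled on a convex sphere, which bounds a $3$-ball in $N$. In case (i), the symmetric difference of the pre- and post-surgery surfaces is $C\cup D_1\cup D_2$, which is an embedded $2$-sphere in $N$ bounding the small solid cylinder cut out by the surgery; in case (ii), the removed component itself bounds a ball. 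In both cases the pre- and post-surgery surfaces differ by a cycle that bounds in $N$, so the homology class is preserved across each surgery. Concatenating this with the smooth isotopies between surgeries shows $[S_t^\epsilon] = \sigma$ for every smooth time of the surgery flow.

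To conclude for the almost regular flow itself, I would fix a regular time $t\geq 0$ of $M_t$. By the approximation statement, on any fixed time interval the surgery flows $S_t^\epsilon$ converge to $M_t$ in Hausdorff distance and the convergence is smooth away from the (closed) singular set. Since $t$ is regular, for $\epsilon$ small enough $S_t^\epsilon$ is a smooth embedded surface converging smoothly to $M_t$; in particular $S_t^\epsilon$ is eventually ambient isotopic to $M_t$ inside a small tubular neighborhood, so $[M_t] = [S_t^\epsilon] = \sigma$.

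The main obstacle is not conceptual but bookkeeping: one must verify that the topological modifications permitted by the surgery construction really are of the two listed types and that the caps $D_1, D_2$ together with the excised neck $C$ enclose a ball in $N$. This is built into the local model of the surgery (everything happens inside a small geodesic ball where the ambient metric is nearly Euclidean and the surface is close to a standard round neck), so the sphere $C\cup D_1\cup D_2$ is null-homotopic in $N$ and hence null-homologous; similarly for spherical discards. The rest of the argument is then the smooth-isotopy invariance of homology together with smooth convergence at regular times.
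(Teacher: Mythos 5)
Your proposal is correct and follows essentially the same route as the paper: approximate by mean curvature flows with surgery, observe that each surgery (neck replacement or discarding a component) leaves the class in $H_2(N,\Z)$ unchanged because the modification bounds, and transfer the conclusion to $M_t$ at regular times via the smooth convergence of the surgery approximation. The only small imprecision is that discarded high-curvature components can be tori bounding solid tori as well as spheres bounding balls, but since these are null-homologous too, the argument is unaffected.
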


\begin{proof}
Again, using the approximation by surgery flows is smooth during regular times, one only needs to note that surgery does not change the $2$nd homology class of a regular surface. 
\end{proof}

\begin{lem}\label{lem:GenusNonincreasing}
Suppose that $M_t$ is a generic or inner/outermost flow out of $M$.  Then the genus of $M_t$, considered at smooth times, is nonincreasing along the flow. 
\end{lem}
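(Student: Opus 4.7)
The approach is to work through the mean curvature flow with surgery approximation $S^\epsilon_\cdot$ of $M_\cdot$ provided by the preceding proposition, where topological changes are explicit and discretely classifiable, and then transfer the conclusion back to $M_\cdot$ using smoothness of that approximation at regular times of the latter.

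Between consecutive surgery times the surgery flow evolves smoothly by mean curvature, hence is an ambient isotopy and preserves total genus. At a surgery time the topological change is one of the following: either a connected component classified as a convex (hence topologically spherical) region is discarded, which trivially does not raise the total genus; or a neckpinch is performed, in which a small annular collar $A \cong S^1 \times [-1,1]$ of some embedded loop $\gamma$ is excised and replaced by two disks. In the latter case Euler-characteristic additivity yields $\chi_{\text{new}} = \chi_{\text{old}} + 2$, and elementary surface classification then shows that if $\gamma$ is nonseparating in its component, the genus of that component drops by one, while if $\gamma$ is separating, the component splits into two components whose genera sum to that of the original. In every case the total genus, summed over components, is nonincreasing across the surgery, and so $\text{genus}(S^\epsilon_t) \le \text{genus}(S^\epsilon_s)$ for any $s < t$.

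Now given a pair of smooth times $s < t$ of $M_\cdot$, the smoothness of the Daniels--Holgate convergence $S^\epsilon_\cdot \to M_\cdot$ away from the singular set implies that for all sufficiently small $\epsilon > 0$ the time slices $S^\epsilon_s$ and $S^\epsilon_t$ are diffeomorphic to $M_s$ and $M_t$ respectively (and in particular the surgery flow has no surgery occurring at precisely these two times, which can be arranged by perturbing $s,t$ if need be since surgeries occur at countably many times). Combining this with the monotonicity just proved gives $\text{genus}(M_t) \le \text{genus}(M_s)$, as desired.

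The one subtle point, and the main potential obstacle, is guaranteeing that the diffeomorphism type of a smooth time slice of $M_\cdot$ is indeed faithfully recorded by the approximant; this is precisely the content of the smoothness portion of Daniels--Holgate's convergence statement once one avoids the at most countable set of surgery times. The same Euler-characteristic bookkeeping handles a non-orientable component of $M_t$ upon interpreting the genus as crosscap number.
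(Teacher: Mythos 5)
Your proposal is correct and follows essentially the same route as the paper: pass to the Daniels--Holgate surgery approximation, note that component deletion and neck replacement by caps cannot raise total genus, and transfer back at smooth times via the smooth convergence. Your Euler-characteristic bookkeeping simply makes explicit the genus accounting that the paper leaves implicit.
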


\begin{proof}
As in the previous proofs, it suffices to note this is the case for approximating surgery flows. Now, in the surgery, components are either deleted or cylindrical regions are cut and replaced with caps, and neither of these actions do not increases the genus, giving the claim. 
\end{proof}

We show in the next section that an almost regular flow will either go extinct or flow to a potentially unstable smooth minimal surface, possibly with multiplicity. If the minimal surface is unstable, we will afterwards wish to perturb the flow to ensure it doesn't flow to it, but as discussed in the introduction, for technical reasons, it seems difficult to carry this out by just perturbing the initial data if the multiplicity is high (i.e. greater than or equal to $2$). With this in mind, we will consider the following piecewise almost regular flows in the sequel:
\begin{defn}\label{pwflow_def} Let $M$ be a properly embedded, $2$--sided surface of a $3$--manifold $N$. Then a piecewise almost regular flow $M_t$ emanating from $M$ defined on $[0, \infty)$ is given by an increasing (possibly infinite) sequence of times $t_i \to \infty$ for which 
\begin{enumerate}
\item $M_t$ on $[0, t_1]$ is an almost regular flow out of $M$ satisfying the conclusions of Proposition \ref{goodflow_ambient} above and so that $M_{t_1}$ is a smooth surface.
\item Denoting by $M_t^1$ on $[0, t_1)$ the flow from above, we let $\tilde{M}^1_{t_1}$ be a surface that is isotopic to $M^1_{t_1}$, and we define $M^2_t$ out of this perturbed surface $\tilde{M}^1_{t_1}$ as above. 
\item Similarly, we define $M^i_t$ for $i \geq 2$ inductively. 
\end{enumerate}
\end{defn}

We end this section with the following technical tool to ensure the limit of the flow we take is smooth:

\subsection{Ilmanen's regularity theorem}
Because the mean curvature flow is the gradient flow of the area functional, the long-time limit is expected to be a critical point of the area functional. From geometric measure theory, the limit is known to be a stationary varifold, and without any other assumptions, the limit can have singularities.

In \cite[Section 4, proof of Theorem 1.1]{Ilmanen95_Sing2D}, Ilmanen used Simon's sheeting theorem \cite{Simon94_Willmore} to show that the tangent flow of a first-time singularity of a mean curvature flow of closed embedded surfaces in $\R^3$ must support on a smooth embedded shrinker, possibly with multiplicity. Ilmanen's argument is very general and can be adapted to study the long-time limit of mean curvature flow. For the sake of clarity, we precisely state the version that we will use in this paper:

\begin{thm}[Ilmanen's regularity for limit surfaces, \cite{Ilmanen95_Sing2D}, Section 4, proof of Theorem 1.1; see also \cite{BamlerKleiner23_Multiplicity1}, Lemma 2.13]\label{thm:IlReg}
    Suppose $\{M_i\}_{i\in\Z_+}$ is a sequence of closed embedded surfaces in a closed manifold $(N^3,g)$, satisfying the following assumptions:
    \begin{enumerate}
        \item $\area(M_i)$ and $\text{genus}(M_i)$ are uniformly bounded from above;
        \item $\int_{M_i}|\vec H|^2 d\mu_{M_i}\to 0$ as $i\to\infty$;
        \item (Area growth bound) there exists $r_0>0$ such that there exists a constant $C$ such that for any $x\in N$ and $r\in(0,r_0)$, $\area(M_i\cap B_r(x))\leq Cr^2$. Here $B_r(x)$ can either be the intrinsic ball in $(N,g)$ or the extrinsic ball for a fixed isometric embedding $(N,g)\to (\R^m,g_{\text{Euc}})$.
    \end{enumerate}
    Then there exists a subsequence of $\{M_i\}_{i\in\Z_+}$, still denoted by $\{M_i\}$, that converges to an integral varifold $V$ in the sense of varifold, such that $V$ is supported on a finitely many disjoint closed embedded minimal surface $\Sigma_1,\cdots,\Sigma_k$, with multiplicity $m_1,\cdots,m_k\geq 1$. Moreover, for any $j=1,2,\cdots,k$, there exists finitely many points $p_{j,1},\cdots,p_{j,\ell_j}$ inside $\Sigma_j$, such that for any $r>0$ that is sufficiently small, for sufficiently large $i$, $M_i\backslash \bigcup_{l=1}^{\ell_j} B_{r}(p_{j,l})$ is the union of $m_k$ number of disjoint connected components, and each component is isotopic to and converges in the varifold sense with either multiplicity $1$ or $2$ to $\Sigma_j \backslash \bigcup_{l=1}^{\ell_j} B_{r}(p_{j,l})$ depending on if the limit is two sided or not. 

\end{thm}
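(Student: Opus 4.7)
My plan is to follow the template of Ilmanen's regularity theorem in \cite{Ilmanen95_Sing2D}, which in turn is based on Simon's sheeting theorem \cite{Simon94_Willmore}, with the main adjustment being that one is looking at the "space-time" limit rather than a tangent flow at a singularity. The three hypotheses are tailored to this: (1) and (3) give compactness of integral varifolds, while (2) ensures the limit is stationary. I would proceed in four main steps.

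First, extract a varifold limit. By the area bound in (1) and the density bound in (3), the varifolds $V_{M_i}$ associated to $M_i$ satisfy the hypotheses of Allard's integral compactness theorem, so after passing to a subsequence, $V_{M_i}\to V$ in the sense of varifolds, with $V$ integer rectifiable. The first variation of $V_{M_i}$ evaluated on a test vector field $X\in C^1_c(TN)$ equals $-\int_{M_i}\vec H\cdot X\,d\mu_{M_i}$, which by Cauchy--Schwarz is bounded by $\|X\|_{\infty}\area(M_i)^{1/2}\bigl(\int_{M_i}|\vec H|^2\,d\mu_{M_i}\bigr)^{1/2}$ and so tends to zero by (1) and (2). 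Hence $V$ is a stationary integral varifold in $N$.

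Second, apply Simon's sheeting/$\varepsilon$-regularity to identify the smooth part of the support. At any point $p\in \spt V$ with tangent varifold a single plane (of any integer multiplicity), the hypothesis $\int|\vec H|^2\to 0$ together with the genus bound in (1) puts us exactly in the setting where Simon's theorem applies: on a ball $B_r(p)$ with $r$ small, each $M_i$ decomposes (for $i$ large) as the graph over the tangent plane of finitely many smooth sheets with small $C^{1,\alpha}$ norm, and the number of sheets equals the multiplicity of $V$ at $p$. Letting $S\subset \spt V$ denote the complement of the set of such good points, standard arguments (tangent cone analysis for stationary integral $2$-varifolds in a $3$-manifold) show $\spt V \setminus S$ is a smooth embedded minimal surface, and the convergence on $\spt V \setminus S$ is smooth in each sheet.

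Third, use the genus bound to show $S$ is discrete, hence finite. Here I would mimic the topological accounting of \cite{Ilmanen95_Sing2D}: at each point of $S$, the tangent varifold is a union of distinct planes (possibly with multiplicity) or a plane of higher multiplicity with sheets joining, and in either case for $i$ large there is a definite contribution to $\genus(M_i)$ coming from handles or non-trivial identifications of the graphical sheets nearby. Since $\genus(M_i)$ is bounded and each singular point contributes at least one unit to the genus of $M_i$ restricted to a small neighborhood, $S$ is finite. Label its points inside each connected component $\Sigma_j$ of $\spt V\setminus S$ by $p_{j,1},\dots,p_{j,\ell_j}$; by the classical removable singularities theorem for stationary integral varifolds with bounded density and small $L^2$ second fundamental form, each $\Sigma_j$ extends to a closed embedded minimal surface across these punctures, and distinct $\Sigma_j$ must be disjoint since they sit inside the support of a single integral varifold.

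Fourth, determine the local multiplicity structure. Away from $\bigcup_{l}B_r(p_{j,l})$ the convergence is smooth and graphical; each graphical sheet over $\Sigma_j$ is isotopic to $\Sigma_j\setminus\bigcup_l B_r(p_{j,l})$. Because each $M_i$ is closed and embedded, a sheet converging to $\Sigma_j$ with multiplicity $>1$ on a one-sided neighborhood forces $\Sigma_j$ to be non-orientable and the sheet to be (a piece of) its orientation double cover, giving multiplicity exactly $2$ in that case. When $\Sigma_j$ is two-sided each sheet is globally a graph and converges with multiplicity one, so the total multiplicity $m_j$ equals the number of sheets. The main obstacle is really the genus-to-singularity accounting in step three, where one must verify that every failure of Allard-regularity at a point of $S$ genuinely costs at least one handle in the approximating surfaces; this is delicate precisely because the sheets can collide in complicated ways, and is where one leans hardest on the curved ambient version of Simon's sheeting theorem.
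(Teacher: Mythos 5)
Your overall route --- Allard compactness to a stationary integral varifold, Simon's sheeting theorem for the local graphical structure, finiteness of the bad points, removable singularities, and the two-sided versus one-sided (double cover) multiplicity dichotomy --- is essentially the paper's route: the paper states Theorem \ref{thm:IlReg} as a direct adaptation of Ilmanen's argument and, when invoking it in the proof of Theorem \ref{smoothfate}, supplies exactly the two inputs your sketch needs, namely the uniform total curvature bound of Lemma \ref{tabounds} and the area growth bound of Lemma \ref{lem:AreaGrowth} (hypothesis (3) here), the latter being precisely what makes Simon's sheeting theorem applicable in this setting rather than just a compactness ingredient.

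There is, however, a genuine flaw in your step three. The claim that ``each singular point contributes at least one unit to the genus of $M_i$ restricted to a small neighborhood'' is false in general: a neck joining two distinct graphical sheets, or two distinct components, need not create a handle --- two nearby parallel spheres joined by a single neck is still a sphere of genus zero --- so bounded genus does not by itself cap the number of bridge points through handle counting, and this is exactly the ``delicate'' point you flag but do not resolve. The correct mechanism, which is Ilmanen's, is curvature concentration: by Gauss--Bonnet, hypotheses (1) and (2) together with the ambient geometry give a uniform bound on $\int_{M_i}|A|^2$ (this is the content of Lemma \ref{tabounds}, and is also the hypothesis Simon's sheeting theorem actually requires, which your step two never states); Simon's theorem then provides an $\varepsilon_0>0$ such that on any small ball where $\int_{M_i\cap B}|A|^2<\varepsilon_0$ the surfaces are unions of graphical sheets converging smoothly, so every bad point must absorb at least $\varepsilon_0$ of the total curvature, and the uniform $L^2$ curvature bound forces the bad set to be finite. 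The genus bound thus enters through Gauss--Bonnet (and in controlling the local sheeting), not through a handle-per-singular-point accounting. With that substitution your outline matches the intended proof.
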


\section{Long-time behavior of almost regular flows: the proof of Theorem \ref{longterm}}

In this section, we show that in the long term, an almost regular flow either goes extinct in finite time or limits to a collection of smooth, but possibly unstable, minimal surfaces. Note that almost regular flows are not necessarily generic, although in further applications we will often suppose our flow is -- for posterities sake, we work in this more general class as much as possible. We finish with some properties of the minimal surfaces we find, should the limit be nonempty. 

Although as stated, the following lemma is not required to invoke Theorem \ref{thm:IlReg} below, the reader may recall that an important step in its proof in \cite{Ilmanen95_Sing2D} is to show that the surfaces involved have finite total curvature (see the proof of Theorem \ref{smoothfate} below). With this in mind, we start with the following total curvature bound for the sake of exposition: 

\begin{lem}\label{tabounds} Let $(N^3,g)$ be 
a closed manifold and $(M_t)_{t\in[0,\infty)}$ is an almost regular flow. For each $\epsilon > 0$, there exists a $C = C(\epsilon, M_0,g)$ such that $\int_{M_t} |A|^2\leq C$ for $t\in[0,\infty)$ away from a set of measure $\epsilon$.  
\end{lem}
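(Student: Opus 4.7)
\emph{Plan.} The strategy combines two standard ingredients: the Gauss--Bonnet theorem, which relates $\int|A|^2$ to $\int H^2$, the Euler characteristic, and a harmless ambient curvature error; and the identity $\tfrac{d}{dt}\area(M_t) = -\int|\vec H|^2$, which makes $\int H^2$ integrable in $t$ and hence bounded off a small exceptional set via Chebyshev.

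Concretely, at any smooth time $t$ of the flow, the Gauss equation for $M_t\subset N$ reads $|A|^2 = H^2 - 2(K_{M_t}-K_N(T_pM_t))$, where $K_{M_t}$ is the intrinsic Gauss curvature of $M_t$ and $K_N(T_pM_t)$ is the ambient sectional curvature of the tangent plane. Integrating and invoking Gauss--Bonnet gives
\[
\int_{M_t} |A|^2 \;=\; \int_{M_t} H^2 \;-\; 4\pi\chi(M_t) \;+\; 2\int_{M_t} K_N(T_pM_t)\,d\mu_t.
\]
Since $N$ is closed, $|K_N|$ is bounded, so the last term is at most $C(g)\area(M_t)\leq C(g)\area(M_0)$, using that almost regular flows have nonincreasing area. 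For the Euler characteristic term, Lemma \ref{lem:GenusNonincreasing} bounds the total genus of $M_t$ by that of $M_0$; since additional components contribute $\chi_i\leq 2$ and only drive $-\chi$ down, this yields $-\chi(M_t)\leq C(M_0)$, accounting also for the (possibly non-orientable but 2-sided) case via the analog for non-orientable genus.

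For the main term $\int_{M_t} H^2$, I would apply property (4) of almost regular flows to the test function $u\equiv 1$ (admissible since $N$ is closed and compact), turning the Brakke inequality into the equality
\[
\area(M_{t_2})-\area(M_{t_1}) \;=\; -\int_{t_1}^{t_2}\!\!\int_{M_t}|\vec H|^2 d\mu_t\,dt,
\]
so that $f(t):=\int_{M_t}|\vec H|^2$ satisfies $\int_0^\infty f\,dt\leq \area(M_0)$. Chebyshev's inequality then gives $|\{t:f(t)>\area(M_0)/\epsilon\}|\leq\epsilon$, so off a set of measure at most $\epsilon$ one has $\int_{M_t} H^2\leq \area(M_0)/\epsilon$. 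Combining with the Gauss--Bonnet identity above yields the claimed bound with $C=C(\epsilon,M_0,g)$.

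The main point to verify is that $M_t$ is genuinely a smooth (possibly disconnected) surface at the times where we apply Gauss--Bonnet and Gauss--Codazzi; this is guaranteed for a.e.\ $t$ by property (3) of almost regular flows, so we may safely enlarge our exceptional set by a null set to contain the singular times. The remaining bookkeeping -- that $-\chi(M_t)$ stays controlled when components split off, and that the Brakke identity really holds up to $t=\infty$ (which I would handle by truncating at finite $T$ and letting $T\to\infty$, using $\area(M_t)\geq 0$) -- is routine. The only genuinely delicate point is the topological control of $\chi(M_t)$, which is why it is important that Lemma \ref{lem:GenusNonincreasing} has been established first.
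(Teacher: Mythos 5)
Your argument is essentially the paper's own proof: Gauss--Codazzi plus Gauss--Bonnet to write $\int_{M_t}|A|^2$ in terms of $\int_{M_t}H^2$, $\chi(M_t)$ and an ambient-curvature term controlled by $\area(M_0)$, and then Chebyshev applied to $\int_0^\infty\int_{M_t}|\vec H|^2\,d\mu_t\,dt\leq\area(M_0)$ coming from the Brakke (in)equality, all at the a.e.\ regular times. The only point to adjust is the source of the topological bound: Lemma \ref{lem:GenusNonincreasing} is stated for generic or inner/outermost flows, whereas the present lemma concerns an arbitrary almost regular flow, so the uniform genus bound (hence the bound on $-\chi(M_t)$) should instead be quoted as a property of almost regular flows from \cite{BamlerKleiner23_Multiplicity1}, exactly as the paper does; with that substitution the proof goes through unchanged.
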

\begin{proof}
Because $M_t$ is regular for a.e. time $t\geq 0$, we can apply the Gauss-Codazzi equations to get $R_N = K + 2 \Ric_N(\nu,\nu) + |A|^2 - H^2$, where $R_N$ and $\Ric_N$ are the scalar and Ricci curvatures on $N$ respectively. With this in hand, by Gauss--Bonnet we have for any $t > 0$ that is a regular time:
\begin{equation}
\int_{M_t} |A|^2 = \int_{M_t} H^2 - 4 \pi \chi(M_t) + C\text{Vol}(M_t).
\end{equation}
Where the constant $C$, independent of $t$, depends on the curvature of $N$. By the property of almost regular flow \cite{BamlerKleiner23_Multiplicity1}, the genus of $M_t$ is bounded by a uniform constant $G$, and hence $\chi(M_t)$ is also bounded by a constant. Of course $\text{Vol}(M_t)$ is bounded in terms of $\text{Vol}(M)$. Since the time derivative of the area is given by $- \int_{M_t} H^2$ (for weak flows, this is a consequence of Brakke's inequality), we see measure of the set of times for which $\int H^2$ is greater than a constant $C_1$ is bounded by $C_1/\text{area}(M_0)$. These two observations combined give what we want. 
\end{proof}

Similarly, we have the following:
\begin{lem}\label{decay} $\liminf\limits_{t\to\infty}\int_{M_{t}} H^2 = 0$.
\end{lem}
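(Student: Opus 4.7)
The plan is to exploit the energy--dissipation identity that the mean curvature flow is the gradient flow of area, which gives a bound on the total integrated $L^2$ norm of the mean curvature over all time in terms of $\area(M_0)$.

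First I would plug the test function $u \equiv 1$ into Brakke's inequality \eqref{eq:BrakkeF}; this is permissible because $N$ is closed, so $u \in C_c^1(N\times[t_1,t_2])$, and all time derivative/gradient terms of $u$ vanish. The result is the monotonicity relation
\[
\mu_{t_2}(N) - \mu_{t_1}(N) \leq -\int_{t_1}^{t_2}\int_N |\vec H|^2\, d\mu_t\, dt
\]
for any $0\leq t_1\leq t_2 < \infty$. In particular, $t\mapsto \area(M_t)$ is nonincreasing and bounded below by $0$, so it has a finite limit as $t\to\infty$.

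Next, I would take $t_1=0$ and let $t_2\to\infty$ to conclude
\[
\int_0^\infty \int_{M_t} H^2\, d\mu_t\, dt \;\leq\; \area(M_0) - \lim_{t\to\infty}\area(M_t) \;\leq\; \area(M_0) \;<\; \infty.
\]
(Note that for an almost regular flow we in fact have equality here by property (4) quoted above from \cite{BamlerKleiner23_Multiplicity1}, but the inequality direction is already sufficient.)

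Finally, a standard argument by contradiction closes things out: if $\liminf_{t\to\infty} \int_{M_t} H^2 > 0$, then there would exist some $T>0$ and $\delta > 0$ with $\int_{M_t} H^2 \geq \delta$ for all $t \geq T$, forcing $\int_T^\infty \int_{M_t} H^2\, d\mu_t\, dt = +\infty$ and contradicting the finiteness established above. Hence $\liminf_{t\to\infty}\int_{M_t} H^2 = 0$ as claimed. There is no real obstacle here since $N$ is closed and so the compact support requirement in \eqref{eq:BrakkeF} is automatic; the only subtle point to mention is that Brakke's inequality, rather than equality, is all one needs because the area is automatically bounded below by zero.
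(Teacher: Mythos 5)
Your proposal is correct and follows essentially the same route as the paper: both use the Brakke inequality (equivalently, the area dissipation $\frac{d}{dt}\area(M_t)\leq -\int_{M_t}H^2$, which you justify by taking $u\equiv 1$ in \eqref{eq:BrakkeF}, valid since $N$ is closed) together with finiteness of $\area(M_0)$ and a contradiction argument. The extra detail about the test function is a fine elaboration but does not change the argument.
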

\begin{proof}
Supposing not, then there exists some $\epsilon > 0$ for which $\int_{M_t} H^2 > \epsilon$ for all sufficiently large $t$. This contradicts that $M$ is compact and so of finite area and that $\frac{d}{dt} \text{Area}(M_t) \leq -\int_{M_t} H^2$ by the Brakke inequality. 
\end{proof}

With Lemma \ref{decay} in hand, Allard's compactness theorem implies the following convergence theorem:

\begin{cor}\label{lem:VariConv}
There exists a sequence of time $t_i\to\infty$ such that $M_{t_i}$ converges to an integral stationary varifold $V$.
\end{cor}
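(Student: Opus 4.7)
The plan is to combine Lemma~\ref{decay} with the standard compactness theorem for integral varifolds. First I would use Lemma~\ref{decay} together with the fact that an almost regular flow is smooth at almost every time to select a sequence of regular times $t_i \to \infty$ along which $\int_{M_{t_i}} H^2 \, d\mu_{t_i} \to 0$. At each such $t_i$ the surface $M_{t_i}$ is a smoothly embedded hypersurface determining an integer rectifiable varifold $V_i$, and since the Brakke inequality implies $\area(M_t)$ is nonincreasing, we have the uniform mass bound $\mathbf{M}(V_i) \le \area(M_0)$.

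Next I would control the first variation of $V_i$ directly from the divergence theorem: for any $C^1$ vector field $X$ on $N$,
\begin{equation*}
\delta V_i(X) \;=\; -\int_{M_{t_i}} \langle X, \vec H \rangle \, d\mu_{t_i},
\end{equation*}
so by Cauchy--Schwarz
\begin{equation*}
|\delta V_i(X)| \;\le\; \|X\|_\infty \left(\int_{M_{t_i}} H^2 \, d\mu_{t_i}\right)^{1/2} \area(M_{t_i})^{1/2},
\end{equation*}
which tends to zero as $i \to \infty$. Thus $V_i$ has uniformly bounded mass and uniformly bounded (indeed, vanishing) first variation. Allard's compactness theorem for integer rectifiable varifolds then yields a subsequence, still denoted $V_i$, converging in the varifold sense to an integer rectifiable $V$, and passing to the limit in the displayed estimate gives $\delta V \equiv 0$, so $V$ is stationary.

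The only delicate point, and essentially the only thing worth checking carefully, is that the $t_i$ furnished by Lemma~\ref{decay} can be taken at regular times of the almost regular flow (so that $M_{t_i}$ is a genuine smooth surface and $\int H^2$ has its classical meaning). This follows since the set of regular times has full Lebesgue measure and, by area monotonicity, for every $\varepsilon > 0$ the set $\{t \ge 0 : \int_{M_t} H^2 \, d\mu_t \ge \varepsilon\}$ has finite Lebesgue measure; hence regular times with $\int H^2$ arbitrarily small exist arbitrarily far out. I do not anticipate any serious obstacle beyond this bookkeeping, as the statement is essentially a packaging of Lemma~\ref{decay} into the varifold compactness framework, leaving the harder work (smoothness and the structural description of $V$) to Theorem~\ref{thm:IlReg}.
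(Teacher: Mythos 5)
Your proposal is correct and follows exactly the route the paper intends: the paper derives this corollary in one line from Lemma~\ref{decay} plus Allard's compactness theorem, and your write-up simply supplies the standard details (mass bound from area monotonicity, vanishing first variation via Cauchy--Schwarz, selection of regular times of full measure). The bookkeeping point about choosing the $t_i$ at regular times is handled the same way in the paper's later proof of Theorem~\ref{smoothfate}, so there is nothing to correct.
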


Of course, from general theory, $\supp V$ is not necessarily a regular minimal surface. However, for surfaces in $3$--manifolds, we have more powerful tools to pick a convergence subsequence whose limit is regular, using Ilmanen's regularity theory, Theorem \ref{thm:IlReg}. We adopt his argument here; Lemma \ref{tabounds} gives us the total curvature bounds we require in the application of Simon's sheeting theorem, but there is a central feature of mean curvature flows in $\R^3$ that does not hold for mean curvature flows in a $3$--manifold, which requires us to take some extra care. Namely Huisken's monotonicity formula shows that a mean curvature flow $M_t$ of surfaces in $\R^3$ has a uniform area growth bound $\area(M_t\cap B_R(x))\leq D\pi R^2$ for all $R>0$, $x\in\R^3$ and $t>0$, where $D$ only depends on $M_0$; see Proposition 3.3 in \cite{sun-generic-multi-1}. This area growth bound is crucial in applying Simon's sheeting theorem in \cite{Ilmanen95_Sing2D}; in particular, see Theorem 10 and Corollary 11 of \cite{Ilmanen95_Sing2D}.

When $M_t$ is a mean curvature flow in a closed manifold $(N,g)$, by isometric embedding $(N,g)$ into $\R^m$, $M_t$ is a mean curvature flow with additional forces $\beta$, where $\beta$ is a tangent vector field on $N$, depending on the second fundamental form of $(N,g)$ in $\R^m$. Because of the forcing term, Huisken's monotonicity formula can not directly provide uniform area growth bound for all future time $t$, but luckily, a modified version of it can for short times, which will suffice for our purposes.

 To start, let us fix an isometric embedding of $(N,g)$ into $\R^m$;  recalling that we suppose $N$ is compact and denote by $L < \infty$ an upper diameter bound for $N$ under this embedding. Then it happens that the forcing term $\beta$, $\beta(x)\in T_xN$, only depends on the second fundamental form of the isometric embedding of $(N,g)$. Hence $\|\beta\|_{L^\infty}\leq C$ for some fixed constant $C$. Considering
\begin{equation}
    \rho_{x_0,t_0}(x,t)=(4\pi(t_0-t))^{-1}e^{-\frac{|x-x_0|^2}{4(t_0-t)}}
\end{equation}
which is the backward heat kernel in $2$-dimensions, the monotonicity formula of mean curvature flow with additional forces \cite{White97_Stratif, AM_CMgenericambient} shows that for $0\leq t_1\leq t_2<t_0$,
\begin{equation}\label{eq:MonoForce}
    \int_{M_{t_2}}\rho_{x_0,t_0} (x,t_2) dx 
\leq 
e^{C(t_2-t_1)}\int_{M_{t_1}}\rho_{x_0,t_0}(x,t_1)dx.
\end{equation}
We remark that the monotonicity formula also works for Brakke flows with additional forces, so this inequality holds even if $M_t$ is not necessarily smooth for $t\in[t_1,t_2]$. The following lemma shows that, while we may not get a uniform area growth bound for all $t>0$, we can show that for sufficiently large $t$, $M_t$ has a uniform area growth bound.

\begin{lem}\label{lem:AreaGrowth}
    There exists $D>0$ and $\mathbf{t}>0$ such that for $t>\mathbf{t}$, and any $R\in(0,L]$, $\tau\in(1,2)$ and $x_0\in \R^m$ that $\area(B_R(x_0)\cap M_{t_i+\tau})\leq D\pi R^2$ where $L$ is the diameter bound of $N$ as above. 
\end{lem}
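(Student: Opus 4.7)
The plan is to derive the local area bound directly from the monotonicity formula with forcing \eqref{eq:MonoForce}, applied between the times $t_1 := t$ and $t_2 := t+\tau$ with the backward heat kernel pole placed at the spacetime point $(x_0,\, t+\tau+R^2)$. With this choice of pole, at time $t_2$ one has
$$\rho_{x_0,\, t+\tau+R^2}(x, t_2) \;=\; \frac{1}{4\pi R^2}\, e^{-|x-x_0|^2/(4R^2)},$$
and restricted to $B_R(x_0)$ this is bounded below by $(4\pi R^2)^{-1} e^{-1/4}$. Hence
$$\int_{M_{t_2}} \rho_{x_0,\, t+\tau+R^2}(\cdot, t_2)\, d\mu_{t_2} \;\geq\; \frac{e^{-1/4}}{4\pi R^2}\,\area(B_R(x_0)\cap M_{t_2}).$$

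At the earlier time $t_1 = t$, the Gaussian integral is estimated from above using nothing more than the global area bound. Since area is almost-monotone along a Brakke flow with bounded forcing term $\beta$, we have $\area(M_t) \leq e^{Ct}\area(M_0)$ for all $t \geq 0$ (or simply $\area(M_t) \le \area(M_0)$ when $\beta \equiv 0$; in the forced case the constant $C$ appearing in \eqref{eq:MonoForce} absorbs this), and therefore
$$\int_{M_{t_1}} \rho_{x_0,\, t+\tau+R^2}(\cdot, t_1)\, d\mu_{t_1} \;\leq\; \frac{\area(M_t)}{4\pi(\tau+R^2)}.$$
Feeding both inequalities into \eqref{eq:MonoForce} and using $\tau \in (1,2)$ yields
$$\area(B_R(x_0)\cap M_{t+\tau}) \;\leq\; \frac{e^{1/4 + C\tau}\, R^2\, \area(M_t)}{\tau + R^2} \;\leq\; e^{1/4+2C}\,\area(M_t)\, R^2,$$
where the last inequality uses $\tau+R^2 \geq 1$. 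Choosing $\mathbf{t}$ large enough that the forced-area factor is controlled (for instance so that $\area(M_t)$ is within, say, twice its lim-inf, which is finite since areas are bounded along an almost regular flow by Lemma~\ref{decay} and the Brakke inequality), one can take $D := e^{1/4+2C}\sup_{t>\mathbf{t}}\area(M_t)/\pi$, and the claim follows uniformly in $x_0 \in \R^m$ and $R \in (0,L]$.

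The only real subtlety is verifying that the forced monotonicity \eqref{eq:MonoForce} is legitimately applied between two times when the flow may be singular at intermediate times; but this is exactly what the remark immediately preceding the lemma records (it holds for Brakke flows with additional forces, hence for almost regular flows). The reason a time-uniform bound is achievable here — despite the exponential factor $e^{C(t_2-t_1)}$ in \eqref{eq:MonoForce} that prevented a global-in-time estimate in the first place — is precisely that the time gap $\tau$ is restricted to the bounded interval $(1,2)$, so $e^{C\tau}$ is controlled by $e^{2C}$. The requirement $\tau > 1$ is used to ensure $\tau + R^2 \geq 1$, giving the scale-invariant $R^2$ bound for all $R \leq L$ rather than a bound that deteriorates for small $R$.
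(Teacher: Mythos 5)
Your proof is correct and follows essentially the same route as the paper: apply the forced monotonicity formula \eqref{eq:MonoForce} over a time gap of length at least $1$ with the pole at scale $R$, bound the Gaussian integral at the later time below by $e^{-1/4}(4\pi R^2)^{-1}\area(B_R(x_0)\cap M_{t+\tau})$, and bound it at the earlier time above by $(4\pi(\tau+R^2))^{-1}\area(M_0)$ using that the kernel's variance is at least $1$ and that area is nonincreasing along the flow. The only cosmetic difference is that the paper fixes the look-back time $\mathbf{t}=1$ with pole at $t+R^2$, while you look back from $t+\tau$ to $t$ with $\tau\in(1,2)$; your aside about an $e^{Ct}$ area growth is unnecessary since the intrinsic Brakke inequality gives $\area(M_t)\le\area(M_0)$ outright, which is what your final constant $D$ in fact uses.
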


\begin{proof}
    Because $\int_{M_t} \rho_{x_0,t_0}(x,t_0-R^2)\geq (4\pi R^2)^{-1}e^{-\frac{1}{4}}\area(M_t\cap B_R(x_0))$, we have  
    \[
    \area(M_t\cap B_R(x_0))\leq 4\pi R^2 e^{\frac{1}{4}}\int_{M_t} \rho_{x_0,t_0}(x,t_0-R^2)dx.
    \]
    By \eqref{eq:MonoForce}, it suffices to show that exists a fixed constant $\mathbf{t}>0$ such that 
    \[
    \int_{M_{t-\mathbf{t}}}\rho_{x_0,t+R^2}(x,t-\mathbf{t})dx
    \]
    has a uniform upper bound for all $t>\mathbf{t}$, $x_0\in N$ and $R\in(0,L]$. We observe that
    \[
    \begin{split}
        \int_{M_{t-\mathbf{t}}}\rho_{x_0,t+R^2}(x_0,t-\mathbf{t})
    =&
    \int_{M_{t-\mathbf{t}}}
    (4\pi(R^2+\mathbf{t}))^{-1} e^{-\frac{|x-x_0|^2}{4(R^2+\mathbf{t})}}dx
    \\
    \leq &
   (4\pi(R^2+\mathbf{t}))^{-1}\area(M_{t-\mathbf{t}})
    \leq (4\pi\mathbf{t})^{-1}\area(M_0).
    \end{split}
    \]
    Therefore, we can choose $\mathbf{t}=1$ to get a uniform upper bound, which yields the uniform area growth bound for $R\in(0,L]$.
\end{proof}
We are now ready to show the first result discussed in the introduction:

\begin{proof}[proof of Theorem \ref{smoothfate}] If the area of $M_t$ is sufficiently small, the clearing out lemma \cite[12.2 and 12.5]{Ilmanen94_EllipReg} shows that $M_t$ goes extinct (i.e. is supported on a set of zero measure) a short time later. Therefore, we may suppose its area is uniformly bounded below for all finite times. Considering a sequence $t_i \to \infty$ as in Lemma \ref{decay} above, by Allard compactness we may extract a converging subsequence which will be a stationary varifold $V$. By the almost all time regularity of the almost regular flows, we may suppose this set of times is regular as well. We also have the area growth estimate of $M_{t_i}$ from Lemma \ref{lem:AreaGrowth}, so that we may apply Theorem \ref{thm:IlReg}. For a brief overview, with the facts above in hand we may employ Simon's sheeting theorem ala Ilmanen \cite[Theorem 10 and Corollary 11]{Ilmanen95_Sing2D} along a sequence of smooth times $t_i \to \infty$ of $M_{t_i}$ to say that $V$ supports on a finite collection of smooth, embedded minimal surfaces $\{\Sigma_1,\cdots,\Sigma_k\}$ and the convergence is $m_j$--sheeted away from a finite number of "bridge points" towards $\Sigma_j$. By the regularity of the limit, and the embeddedness of $M_{t_i}$, all the $\Sigma_j$'s are disjoint via the maximum principle. 
\end{proof}

    Higher multiplicity convergence can indeed occur, see \cite{ChenSun24_Multi2inManifold}, and in the case one of the $\Sigma_i$ is strictly stable should be a robust phenomenon. Also, it is easy to see that the limiting set could be disconnected. Take, for instance, the initial data given by two disconnected area-minimizing closed embedded (say, strictly stable) minimal surfaces connected by a very thin neck. Then shortly after the flow is started, the neck will quickly pinch and retract to each minimizing surface, and the limit of the flow will be the union of these two disconnected minimal surfaces.

 It is interesting to ask to what extent the limit is unique.  If the convergence has multiplicity $1$, and the manifold is analytic, then by the classic \L{}ojasiewicz-Simon inequality, together with the Brakke/White regularity theory, $M_t$ indeed convergence to $\Sigma$ smoothly as $t\to\infty$, not only subsequentially. Without this machinery, even in the most well-known case, the curve shortening flow on surfaces, this question is not completely understood. In fact, in \cite{Grayson89_CSF} Grayson conjectured that there exists an example of curve shortening flow whose limit geodesics are not unique.

It is not so hard, though, to see that the limit will be unique if the limiting minimal surfaces are strictly stable, which we show next. To start, the following lemma shows that if we carefully choose $t_i$, $M_{t_i}$ will stay in a neighborhood of the limit surfaces. In other words, those necks described in the example above indeed must disappear in a very short period of time.

\begin{lem}\label{lem:LimNbhd}
     In the context of Theorem \ref{smoothfate} given a sequence $s_i \to \infty$ along which the flow converges to a collection of minimal surfaces $\Sigma_j$ for any $\delta>0$ we can choose another sequence of $t_i\to\infty$ such that, when $t_i$ is sufficiently large, $M_{t_i}$ must lie inside the $\delta$-tubular neighborhood of $\cup_{j=1}^k \Sigma_j$. 
\end{lem}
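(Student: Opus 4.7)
The plan is to combine the varifold convergence of $M_{s_i}$ with Ilmanen's clearing out lemma. Fix $\delta > 0$ and write $U_\eta := N_\eta(\bigcup_j \Sigma_j)$ for the $\eta$-tubular neighborhood of the limit surfaces. Since the limit varifold $V = \sum_j m_j[\Sigma_j]$ is supported in $\bigcup_j \Sigma_j \subset U_{\delta/2}$, testing the varifold convergence $M_{s_i} \to V$ against a continuous bump function equal to $1$ on $N \setminus U_{\delta/2}$ and vanishing on $U_{\delta/4}$ yields
\[
\mu_{M_{s_i}}(N \setminus U_{\delta/2}) \longrightarrow 0.
\]

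Next, I would cover the compact set $\overline{N \setminus U_\delta}$ by finitely many balls $\{B_{r/2}(x_k)\}_{k=1}^K$ of common small radius $r > 0$, chosen so that each doubled ball $B_r(x_k)$ is contained in $N \setminus U_{\delta/2}$; this is possible by compactness of $N$ together with the fact that the finite disjoint family $\{\Sigma_j\}$ admits a uniform tubular neighborhood. By the previous step, $\mu_{M_{s_i}}(B_r(x_k)) \to 0$ as $i \to \infty$ for each $k$.

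The final step is to invoke the clearing out lemma for Brakke flows in Riemannian manifolds (see Ilmanen \cite[\S 12]{Ilmanen94_EllipReg}, already used in the proof of Theorem \ref{smoothfate}). This provides constants $\epsilon_0, c_0 > 0$, depending only on the ambient geometry of $(N,g)$, such that whenever $\mu_{M_t}(B_r(x)) < \epsilon_0 r^2$ one has $\spt \mu_{M_{t + c_0 r^2}} \cap B_{r/2}(x) = \emptyset$. Applied at each $x_k$ at time $s_i$, for all $i$ sufficiently large this forces $\spt \mu_{M_{s_i + c_0 r^2}} \cap \bigcup_k B_{r/2}(x_k) = \emptyset$, and hence $\spt \mu_{M_{s_i + c_0 r^2}} \subset U_\delta$. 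Setting $t_i := s_i + c_0 r^2$, nudged slightly if needed so that $t_i$ is a regular time (which is available by almost regularity), produces the required sequence.

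The main technical point I expect to have to verify is that Ilmanen's clearing out lemma applies uniformly in the curved ambient setting with constants $\epsilon_0, c_0$ independent of $i$; this is essentially standard because $N$ is closed, but should be invoked carefully. A secondary subtlety is the uniform choice of the radius $r$, which relies on compactness of $N$ together with a uniform tubular neighborhood constant for the finite disjoint collection $\{\Sigma_j\}$.
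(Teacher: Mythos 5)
Your argument is correct and follows essentially the same route as the paper: varifold convergence forces the mass in balls away from $\bigcup_j\Sigma_j$ to vanish, a finite cover of the compact complement is taken, and a clearing-out argument then empties those balls a definite (fixed, scale-$r^2$) time after $s_i$, so $t_i=s_i+c_0r^2$ works. The only cosmetic difference is that the paper derives the clearing-out step by hand from the monotonicity formula with forcing term (Gaussian density $<1$ over an interval of times, which also makes the choice of regular times with good curvature bounds immediate), whereas you invoke Ilmanen's clearing out lemma as a black box --- the same mechanism the paper itself cites in the proof of Theorem \ref{smoothfate}.
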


\begin{proof}
   Denote by $\cN_{\delta}(\Sigma)$ the (open) $\delta$-tubular neighborhood of a closed embedded surface $\Sigma$ in $N$, and consider a sequence of $s_i\to\infty$ as in Theorem \ref{smoothfate} such that $M_{s_i}$ converges to $m_1\Sigma_1+\cdots+m_k\Sigma_k$ in the sense of varifolds. For any $x_0\in N\backslash(\bigcup_{j=1}^k\cN_{\delta}(\Sigma_j))$, $\area(M_{s_i}\cap B_{\delta}(x_0))\to 0$ because $B_{\delta}(x_0)\cap(\bigcup_{j=1}^k\Sigma)=\emptyset$. In particular, this shows that for any $\eps>0$, $x'\in B_{\delta/2}(x_0)$, and $\eta\in(1/2,3/4)$, the Gaussian weight
    \[
    \int_{M_{s_i}}\rho_{x',s_i+(\eta\delta)^2}(x,s_i)dx<\eps, \text{when $s_i$ is sufficiently large.}
    \]
    Then by \eqref{eq:MonoForce} again, we see that when $\eps$ is chosen sufficiently small, the Gaussian density of $x'\in B_{\delta/2}(x_0)$ of $M_t$ for $t\in(s_i+\delta/2,s_i+3\delta/4)$ is strictly less than $1$, hence must be $0$. In other words, for $t\in(s_i+\delta/2,s_i+3\delta/4)$, $M_t$ is disjoint from $B_{\delta/2}(x_0)$ for sufficiently large $s_i$. Because $N\backslash \bigcup_{j=1}^k\cN_{\delta}(\Sigma_j)$ is compact, we can cover it by finitely many $B_{\delta/2}(x_0)$'s. This allows us to pick a sequence of $t_i\to\infty$ such that $M_{t_i}\subset \bigcup_{j=1}^k\cN_{\delta}(\Sigma_j)$ that have bounded total curvature and total mean curvature tending to zero. The argument of Theorem \ref{smoothfate} may then be rerun on this sequence to extract a subsequence which converges to a union of smoothly embedded minimal surfaces $\Sigma_j'$. Using these minimal surfaces, the conclusion follows by the triangle inequality. 
\end{proof}

We remark that in the above proof, conceivably $\Sigma_i \neq \Sigma_i'$, and in particular, the further chosen subsequence $M_{t_j}$ may not converge to the same collection of closed embedded minimal surfaces $m_1\Sigma_1+\cdots+m_k\Sigma_k$. But their limit must be very close to them, in the sense that the limit lies in a tubular neighborhood of these minimal surfaces. Now, recall that a stable minimal surface $\Sigma$ is strictly stable if the linearized operator $L_\Sigma$ has only positive eigenvalues. Using a barrier argument and the lemma above, we observe we may show that if the limit stable minimal surfaces are strictly stable, then the limit is unique, even if the convergence is possibly in high multiplicity and in a merely smooth (but not necessarily analytic) background manifold $N$:

\begin{thm}\label{longterm_uniqueness1}
    Suppose $M_t$ is an almost regular flow such that there exists $t_i\to\infty$ such that $M_{t_i} \to m_1\Sigma_1+\cdots+m_k\Sigma_k$ as in Theorem \ref{smoothfate}, and each $\Sigma_j$ is strictly stable. Then the limit is unique: namely, for any sequence of $s_i\to\infty$ so that $M_{s_i}$ converges as varifolds, the limit must be $m_1\Sigma_1+\cdots+m_k\Sigma_k$.
\end{thm}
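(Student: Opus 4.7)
The plan is to use strict stability of each $\Sigma_j$ to construct mean curvature flow barriers that confine $M_t$ to arbitrarily thin, disjoint tubular neighborhoods of the $\Sigma_j$'s, and then to exploit monotonicity of local mass under Brakke's inequality to identify the multiplicities.

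First, for each $\Sigma_j$ (passing to the orientation double cover if $\Sigma_j$ is one-sided, per the extension of the stability notion in \cite{Urbano13_one-sided}), let $\phi_j>0$ be the first eigenfunction of the Jacobi operator $L_j = -\Delta_{\Sigma_j} - |A_{\Sigma_j}|^2 - \Ric_N(\nu_j,\nu_j)$, with eigenvalue $\lambda_{j,1}>0$ by strict stability. For $s>0$ small, the graph $\Sigma_j^{\pm,s} := \{\exp_x(\pm s\phi_j(x)\nu_j(x)) : x\in\Sigma_j\}$ has mean curvature vector $-s\lambda_{j,1}\phi_j\nu_j + O(s^2)$ pointing strictly toward $\Sigma_j$. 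A standard linearization argument then shows these surfaces flow smoothly by MCF and decay exponentially back to $\Sigma_j$. Thus the two families $\{\Sigma_j^{\pm,s}\}_{s\in(0,s_0]}$ foliate a two-sided tubular neighborhood of $\Sigma_j$ by MCF barriers collapsing onto $\Sigma_j$. Since by Theorem \ref{smoothfate} the $\Sigma_j$'s are mutually disjoint, the enclosed neighborhoods $U_j$ can be taken pairwise disjoint.

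Next, by Lemma \ref{lem:LimNbhd} applied with $\delta$ so small that $\mathcal{N}_\delta(\Sigma_j)\subset U_j$ for each $j$, there exists $T$ with $M_T \subset \bigsqcup_j U_j$. Applying Theorem \ref{ambient_avoidance} to $M_t$ and to the MCF evolutions of $\Sigma_j^{\pm,s_0}$, for every $t\geq T$ the flow $M_t$ remains inside $\bigsqcup_j U_j$ and is trapped between the two collapsing barriers around each $\Sigma_j$. Consequently the support of $M_t$ converges in Hausdorff distance to $\bigcup_j \Sigma_j$, and any subsequential varifold limit $V'$ of $M_{s_i}$ satisfies $\supp V' \subset \bigcup_j \Sigma_j$. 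Theorem \ref{smoothfate} applied along such a subsequence then yields $V' = \sum_j m_j' \Sigma_j$ for some nonnegative integers $m_j'$. To conclude $m_j' = m_j$, choose $\varphi_j\in C_c^1(N)$ equal to $1$ on $U_j$ and supported in a slight enlargement, thin enough that $\supp M_t \cap \supp \nabla\varphi_j = \emptyset$ for all $t\geq T$ (possible because the barriers squeeze $M_t$ strictly into $U_j$). Brakke's inequality then gives $\tfrac{d}{dt}\int\varphi_j\, d\mu_t \leq -\int|\vec H|^2\varphi_j\, d\mu_t \leq 0$, so the localized mass $\mu_t(U_j) = \int \varphi_j\, d\mu_t$ is nonincreasing in $t\geq T$ and tends to a limit $A_j^\infty$. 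Evaluating this limit along the given subsequence $t_i$ yields $A_j^\infty = m_j\area(\Sigma_j)$, while evaluating along $s_i$ yields $A_j^\infty = m_j'\area(\Sigma_j)$. Hence $m_j' = m_j$ and $V' = \sum_j m_j\Sigma_j$, as desired.

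The main technical subtlety lies in the barrier construction for one-sided $\Sigma_j$: the first eigenfunction of the Jacobi operator on the double cover is antisymmetric with respect to the deck transformation, so the graphs $\Sigma_j^{\pm,s}$ assemble into a single connected two-sided surface which realizes the boundary of a tubular neighborhood of $\Sigma_j$, and one must verify that this surface still has mean curvature pointing everywhere toward $\Sigma_j$ and that its MCF evolution stays graphical while collapsing onto $\Sigma_j$. Both properties follow from strict stability of the double cover plus a short-time linear analysis, but the setup demands some care.
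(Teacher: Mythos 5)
Your confinement step (strict stability giving a collapsing mean–convex barrier foliation around each $\Sigma_j$, Lemma \ref{lem:LimNbhd} to get the flow inside, then avoidance to trap it there) is essentially the same as the paper's, and it correctly yields that any subsequential varifold limit is supported in $\bigcup_j\Sigma_j$. The genuine gap comes afterwards, at the sentence ``Theorem \ref{smoothfate} applied along such a subsequence then yields $V'=\sum_j m_j'\Sigma_j$ for some nonnegative integers $m_j'$.'' Theorem \ref{smoothfate} does not apply along an arbitrary sequence $s_i\to\infty$: its conclusion (a limit which is a smooth minimal surface counted with integer multiplicity) is obtained via Ilmanen's regularity theorem, which needs the times to be regular, to carry the uniform bound on $\int_{M_{s_i}}|A|^2$ from Lemma \ref{tabounds}, and to satisfy $\int_{M_{s_i}}H^2\to 0$ as in Lemma \ref{decay}; these hold only along specially chosen (almost-every-time type) sequences, not along the given $s_i$, which may even be singular times or times where $\int H^2$ is large. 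Without that structural input, your localized mass monotonicity cannot close the argument: knowing $\supp V'\subset\bigcup_j\Sigma_j$ together with $\|V'\|(U_j)=m_j\area(\Sigma_j)$ does not determine $V'$ as a varifold, since a priori the limit along a bad sequence need not be rectifiable or have constant integer density on each $\Sigma_j$ (Allard compactness is unavailable without a uniform first-variation bound), so both the density distribution over $\Sigma_j$ and the Grassmannian part are left unconstrained by a single total-mass identity per component.

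This is exactly the subtlety the paper flags (``we may only apply Ilmanen's analysis to special sequences of times'') and resolves differently: using that almost regular flows are continuous in time in the weak topology with equality in the Brakke formula (Lemma 2.7 of Bamler--Kleiner), together with Lemmas \ref{tabounds} and \ref{decay}, one approximates an arbitrary sequence $s_i$ by nearby good times $s_i'$ so that $\int_{M_{s_i}}u-\int_{M_{s_i'}}u$ is as small as desired for test functions $u$ supported near arbitrary subdomains $D\subset\Sigma_j$; applying Ilmanen's analysis along $s_i'$ and then ranging over $D$ and $j$ pins down the full density distribution of $V'$, not just its total mass near each component. If you want to salvage your scheme you would need to replace the single cutoff $\varphi_j\equiv 1$ on $U_j$ by such localized test functions, at which point the gradient term $\nabla\varphi\cdot\vec H$ no longer vanishes on the flow and must be controlled by the spacetime integrability of $|\vec H|^2$ and time-continuity --- which is precisely the paper's mechanism. (Separately, your one-sided barrier construction needs more care than stated: the relevant eigenfunction on the double cover is the first eigenvalue among anti-invariant sections and changes sign, so the assembled boundary surface's inward mean convexity is not immediate; the paper sidesteps this by citing the foliation results of Stevens--Sun for both the two-sided and one-sided cases.)
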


\begin{proof}
    For any $\delta>0$, we can apply Lemma \ref{lem:LimNbhd} to claim that there is another sequence, which we still label $t_i$, such that $M_{t_i}$ lies inside the union of the $\delta$-tubular neighborhood of the $\Sigma_j$'s when $t_i$ is sufficiently large.

    To proceed, we use the neighborhood foliation structure of strictly stable minimal surfaces: because each $\Sigma_j$ is strictly stable, if it is $2$-sided, then there exists a tubular neighborhood $\cN^j$ of $\Sigma_j$ such that $\cN^j$ is foliated by surfaces $\Sigma^t$ for $t\in(-s,s)$, such that when $t\in(-s,0)\cup(0,s)$, $\Sigma^t$ has positive (inward pointing) mean curvature. For $1$-sided $\Sigma_j$, there is a similar structure of the neighborhood. For a proof of this fact, see \cite[Lemma A.1]{StevensSun24_LargeArea} for $2$-sided case, and \cite[Section 6]{StevensSun24_LargeArea} for $1$-sided case.

    Using this fact, we can find a tubular neighborhood $\cN^j$ of $\Sigma_j$ such that $\pr\cN^j$ has strictly positive mean curvature. In particular, if at the beginning we choose $\delta>0$ sufficiently small so that the $\delta$-tubular neighborhood is contained in $\cN^j$, then $M_t$ will lie within $\bigcup_{j=1}^k\cN^j$ for $t\geq t_i$. Hence, we see any limit of $M_t$ has to be supported in $\bigcup_{j=1}^k\cN^j$. If $\delta > 0$ is sufficiently small we may apply Brakke regularity to see the flows of boundary components of $\bigcup_{j=1}^k\cN^j$ are smooth for all $t > 0$ and asymptote back to $\bigcup \Sigma_j$, so by the comparison principle any converging limit of $M_t$ has to be supported on $\bigcup_{j=1}^k\Sigma_j$. The question of convergence in varifold topology to this entire set is more subtle, though, because in the above we may only apply Ilmanen's analysis to a special sequence of times $s_i \to \infty$.

 To deal with this, first we point out that for any sequence $s_i \to \infty$, the argument from the proof of Theorem \ref{smoothfate} does apply, we get the limit must be supported precisely on $\bigcup_{j=1}^k\Sigma_j$. Moreover, by the monotonicity of area along the mean curvature flow, the multiplicities may not vary either, so that for any sequence $t_i \to \infty$ for which $M_{t_i}$ converges "well" as in the proof of Theorem \ref{smoothfate}, $M_{t_i}\to m_1\Sigma_1+\cdots+m_k\Sigma_k$ as $t_i\to\infty$. Now, for a given $1 \leq j \leq k,$ choose a small $\epsilon > 0$, let $u_{j, \epsilon, D}$ be a smooth function which is supported on the $\epsilon$ tubular neighborhood of a domain $D \subset \Sigma_j$. Because almost regular flows are smooth for almost all times and continuous in the weak topology in time (see Lemma 2.7 in \cite{BamlerKleiner23_Multiplicity1}) Lemmas \ref{tabounds}, \ref{decay} give us that for any $\delta > 0$ we can approximate any sequence $s_i \to \infty$ by another sequence $s_i' \to \infty$ so that $|\int_{M_{s_i}} u_{j, \epsilon, D} - \int_{M_{s_i'}} u_{j, \epsilon, D}| <  \delta$ and that Ilmanen's analysis applies to the sequence $s_i'$. Because from the above any converging limit of $M_t$ must be contained in $\bigcup \Sigma_j$ if the sequence of $M_{s_i}$ converges as varifolds its support must be contained in $\bigcup_{j=1}^k\Sigma_j$, and so by ranging over $D, j$ and letting $\epsilon, \delta \to 0$ we thus get that if the sequence $M_{s_i}$ converges as varifolds the limit must be precisely $m_1\Sigma_1+\cdots+m_k\Sigma_k$ as claimed. \end{proof}

Indeed, if one of the minimal surfaces $\Sigma_i$ is merely stable but not strictly stable, the above contracting tubular neighborhood may not exist. An example of such a minimal surface can be found in \cite[Appendix B]{StevensSun24_LargeArea}. To wrap up this section, we summarize some topological properties of the limit minimal surfaces in the case $M_t$ can be approximated by generic flows: 
\begin{prop}\label{limit_topology}
    Suppose that $M_t$ is an inner or outermost flow of $M$ or is a generic flow whose long term limit is nonempty, and let $m_1\Sigma_1+\cdots+m_k\Sigma_k$ be limiting minimal surfaces as in Theorem \ref{smoothfate}, and that $N$ is oriented, implying 2--sidedness is equivalent to orientability. Then the following hold:
    \begin{enumerate}
     \item $m_1\Sigma_1+\cdots+m_k\Sigma_k$ in Theorem \ref{smoothfate} is homologous to $M$ in $H_2(N,\Z_2)$.
      \item If $\Sigma_j$ is $1$--sided, then $m_j$ is even. 
     \item When all the $\Sigma_j$ are orientable $\sum\limits_{j=1}^k m_j\cdot \text{genus}(\Sigma_j)\leq \text{genus}(M)$. More generally 
     \begin{equation}
     \sum\limits_{j, \text{ }\Sigma_j \text{ 2--sided}} m_i \cdot\text{genus}(\Sigma_j) + \sum\limits_{j,  \text{ }\Sigma_j \text{ 1--sided}} \frac{m_j}{2} \cdot (\text{genus}(\Sigma_j) - 1)\leq \text{genus}(M)
     \end{equation} 
   
     \end{enumerate}
\end{prop}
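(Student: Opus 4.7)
I choose a sequence $t_i\to\infty$ of regular times along which both Theorem~\ref{smoothfate} and Theorem~\ref{thm:IlReg} apply. The three conclusions rely, respectively, on (i) preservation of $\Z_2$ homology under surgery (Lemma~\ref{lem:PreserveHomology}) combined with a local argument at the bridge points $\{p_{j,l}\}$; (ii) two-sidedness of $M_{t_i}$ (Lemma~\ref{lem:PreserveOriented}) together with the last sentence of Theorem~\ref{thm:IlReg}; and (iii) monotonicity of genus (Lemma~\ref{lem:GenusNonincreasing}) coupled with an elementary inequality for disjoint embedded subsurfaces of a closed surface.

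\textbf{Parts (1) and (2).} For (1), Lemma~\ref{lem:PreserveHomology} (applied via the surgery approximation, which preserves $H_2(N,\Z_2)$) gives $[M_{t_i}]=[M]$ in $H_2(N,\Z_2)$ for every regular $t_i$. By Theorem~\ref{thm:IlReg}, outside the union $U_r=\bigcup_{j,l}B_r(p_{j,l})$ the surface $M_{t_i}$ converges smoothly, as an $m_j$-sheeted multisection, to $\Sigma_j$, so the $\Z_2$-chains $M_{t_i}\setminus U_r$ and $\sum_j m_j\Sigma_j\setminus U_r$ are homologous (indeed arbitrarily close) for $i$ large. Inside each ball $B_r(p_{j,l})$, the pieces $M_{t_i}\cap B_r(p_{j,l})$ and $\sum_j m_j\Sigma_j\cap B_r(p_{j,l})$ share the same boundary on $\partial B_r(p_{j,l})$, so their symmetric difference is a closed surface in a ball and hence null-homologous in $\Z_2$. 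Summing over all bridge points yields $[M]=\sum_j m_j[\Sigma_j]$. For (2), Lemma~\ref{lem:PreserveOriented} shows that each regular-time slice $M_{t_i}$ is $2$-sided, hence (since $N$ is oriented) orientable. If $\Sigma_j$ is $1$-sided, then by the last sentence of Theorem~\ref{thm:IlReg} each of the connected components of $M_{t_i}$ collapsing onto $\Sigma_j$ must double-cover $\Sigma_j$ (a single-sheet cover would force a $1$-sided component, contradicting orientability), so $m_j$ is twice the number of those components.

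\textbf{Part (3).} Write $g_j=\text{genus}(\Sigma_j)$ (non-orientable genus if $\Sigma_j$ is $1$-sided). By Lemma~\ref{lem:GenusNonincreasing}, $\text{genus}(M_{t_i})\leq \text{genus}(M)$ at regular times. Applying Theorem~\ref{thm:IlReg}, for $i$ large the surface $M_{t_i}\setminus U_r$ is a disjoint union of ``sheets'', each an embedded compact subsurface of $M_{t_i}$ with boundary on $\partial U_r$. The sheets near a $2$-sided $\Sigma_j$ number $m_j$, each isotopic to $\Sigma_j$ minus finitely many disks, and so each has genus $g_j$; the sheets near a $1$-sided $\Sigma_j$ number $m_j/2$, each is diffeomorphic to the orientable double cover of $\Sigma_j$ minus finitely many disks, and a short Euler-characteristic computation gives each of those genus $g_j-1$. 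The remaining ingredient is the purely topological inequality
\begin{equation}
\sum_{i=1}^N \text{genus}(F_i)\ \leq\ \text{genus}(F)
\end{equation}
whenever $F_1,\ldots,F_N$ are disjoint embedded compact subsurfaces of a closed orientable surface $F$. This follows from $\chi(F)=\sum_i\chi(F_i)+\sum_k\chi(R_k)$ for the complementary components $R_k$ together with the fact that the bipartite dual graph on $\{F_i\}\cup\{R_k\}$ is connected, hence has at least $N+|\{R_k\}|-1$ edges (gluing circles). Applying this to the sheets of $M_{t_i}$ and using the Lemma~\ref{lem:GenusNonincreasing} bound gives the stated inequality.

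\textbf{Expected obstacles.} The most delicate step is verifying in (1) that the ``local difference'' at each bridge point is genuinely a closed $\Z_2$-cycle inside the ball, which relies on the boundary-on-$\partial B_r(p_{j,l})$ matching provided by the smooth $m_j$-sheeted convergence of Theorem~\ref{thm:IlReg}. For (3) the main care is in confirming that the sheets are embedded as subsurfaces \emph{of $M_{t_i}$} (not merely of the ambient $N$) so that the disjoint-subsurface genus inequality applies, and in correctly identifying the genus contribution $g_j-1$ in the $1$-sided case via the orientable double cover.
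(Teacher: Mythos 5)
Your proposal is correct and takes essentially the same route as the paper's proof: item (1) via preservation of the $\Z_2$ homology class along the flow together with passage to the varifold limit, item (2) via the fact that sheets over a $1$--sided $\Sigma_j$ are orientable double covers so the multiplicity comes in pairs, and item (3) via the sheeted convergence of Theorem \ref{thm:IlReg}, genus monotonicity (Lemma \ref{lem:GenusNonincreasing}), and the observation that the double cover of a connected sum of $g_j$ projective planes has genus $g_j-1$. The only difference is that you spell out two steps the paper leaves implicit (the bridge-point patching in (1) and the disjoint-subsurface genus inequality via the Euler characteristic and connectivity of the dual graph in (3)), and both are fine.
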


\begin{proof}
By Lemma \ref{lem:PreserveHomology}, item (1) holds because this homology is preserved under limits. Now for the other parts, we first note that because the convergence to the $\Sigma_j$ is multisheeted away from a discrete set of points, each sheet in the convergence corresponds to a connected component of the boundary of the tubular neighborhood of the limit (or, in other words, a connected component of its unit normal bundle which, for a surface $\Sigma$, in this case will have either one or two connected components depending on the orientability of $\Sigma$). If $\Sigma_j$ is $1$-sided, there is only one connected component, but about a point locally there are two components, giving item (2). For item (3), by genus of nonoriented $\Sigma_j$ we refer to the demigenus and, since this may be somewhat less familiar, first we consider the case where all the $\Sigma_j$ are orientable. In this case, because all of the limit surfaces $\Sigma_j$ are orientable, each sheet over $\Sigma_j$ in the convergence is homeomorphic to $\Sigma_j$ away from a finite set of points, following the discussion above; then the claim follows easily from Lemma \ref{lem:GenusNonincreasing}. In case some are nonorientable, we note by the classification of surfaces that they must be homeomorphic to a connected sum of projective planes, say $k_j$ of them. The demigenus of such a connect sum is $k_j$ and the double cover of such a connect sum, corresponding to a single sheet in the convergence, happens to have genus $k_j - 1$. Because these numbers agree, we obtain the formula in item (3).  \end{proof}

\section{The long--time limit is generically stable or empty: the proofs of Theorems \ref{Rc_thm} and \ref{pw_thm}}
From the previous section, we see that if the limit of an almost regular flow is nonempty, it converges to a collection of smooth minimal surfaces, possibly with multiplicity. We also obtain some rough topological information about the limit. Even so, it is still often desirable to specifically produce stable minimal surfaces because they carry more structure; for instance, as a basic but important example, we recall that stable surfaces in PSC $3$--manifolds must be diffeomorphic to $S^2$ or $\R P ^2$, but just in $S^3$ there are minimal surfaces of arbitrarily large genus. In this section, we show that with appropriately timed small isotopies, we can construct piecewise flows whose long-time limits are either empty or stable minimal surfaces.

We consider first the Ricci positive case. As promised in the introduction, arguing in this case turns out to be simple because, in this case, the avoidance principle says two disjoint flows actually "repel." In fact, we don't even need the regularity statements from the previous statement to proceed, and indeed, our argument works in all dimensions.

\begin{proof}[proof of Theorem \ref{Rc_thm}] Where $M$ and $N$ are as in the statement of the theorem, suppose that the flow $M_t$ of $M$ doesn't go extinct in finite time. Since the flow is gradient of the area functional, we may pick a sequence of times $t_i \to \infty$ for which $\int_{M_{t_i}} H^2 \to 0$.  After potentially passing to a subsequence, the sequence $M_{t_i}$ of surfaces limits to a varifold $\Sigma$ which, by the choice of $t_i$, must be stationary. Now consider a one sided perturbation $M'$ of $M$, for which say $d(M, M') > \delta$ for some $\delta > 0$. Since $N$ is compact and Ricci positive, there exists some $\Lambda > 0$ for which the Ricci curvature of $N$ is bounded below by $\Lambda$. Hence by the avoidance principle, Theorem \ref{ambient_avoidance} above, we have that the distance between $M'_t$ and $\Sigma$ is bounded below by $\delta e^{\Lambda t}$ for all times; because $N$ is compact so has bounded diameter for $t$ sufficiently large this gives that the level set flow $M'_t$ of $M'$ must be empty. 
\end{proof}

We point out that, as a consequence of the above, in an orientable $3$--manifold every $2$--cycle can be represented by a properly embedded $2$--sided surface (see, for instance, Lemma 3.6 in \cite{Hatcher07_Notes3Mfd}), and Proposition \ref{limit_topology} we have another proof of the following classical fact:
\begin{cor}\label{vanish} $H_{2}(N, \R)$ of a compact orientable $3$--manifold $N$ which admits a metric of positive Ricci curvature vanishes. 
\end{cor}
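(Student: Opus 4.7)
The plan is to combine Theorem \ref{Rc_thm} (extinction under positive Ricci curvature) with the homology--preservation Lemma \ref{lem:PreserveHomology} and the cited representability of every $2$--cycle by a properly embedded $2$--sided surface. First I would reduce the real coefficient statement $H_2(N,\R)=0$ to the integral statement $H_2(N,\Z)=0$. For compact orientable $N$, Poincar\'e duality gives $H_2(N,\Z) \cong H^1(N,\Z)$, and $H^1(N,\Z) = \mathrm{Hom}(H_1(N,\Z),\Z)$ is torsion--free as a $\mathrm{Hom}$ group; since $H_2(N,\R) = H_2(N,\Z)\otimes \R$ by universal coefficients, the two vanishing statements are equivalent.

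Next, fix an arbitrary class $\sigma \in H_2(N,\Z)$. By Lemma 3.6 of \cite{Hatcher07_Notes3Mfd}, $\sigma$ admits a representative that is a properly embedded orientable surface $M$; since $N$ is orientable, $M$ is automatically $2$--sided. I would then perturb $M$ by an arbitrarily small isotopy so that the resulting surface satisfies the genericity condition required in Theorem \ref{Rc_thm}; such an isotopy preserves the homology class. Applying Theorem \ref{Rc_thm} to the perturbed surface, its almost regular mean curvature flow $M_t$ goes extinct in finite time $T > 0$.

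Finally, Lemma \ref{lem:PreserveHomology} ensures that $[M_t] = \sigma$ in $H_2(N,\Z)$ at every regular time $t \geq 0$; picking any regular $t > T$ forces $\sigma = [\emptyset] = 0$. (Equivalently, Proposition \ref{limit_topology}(1) gives the mod $2$ version of this conclusion, which also suffices thanks to the torsion--freeness from the first paragraph.) Since $\sigma$ was arbitrary, $H_2(N,\Z)=0$, and hence $H_2(N,\R) = 0$.

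The argument presents no substantial analytical obstacle, as the nontrivial content (existence of the generic flow, its extinction under positive Ricci, and preservation of homology through surgery approximation) is already in place. The only place needing a little care is the coefficient reduction in the opening paragraph, which uses the specific topology of orientable $3$--manifolds via Poincar\'e duality; without that input one would only obtain $\sigma = 0$ mod $2$, which in more general settings need not imply vanishing over $\Z$.
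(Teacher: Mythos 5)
Your argument is correct and follows essentially the same route as the paper: represent a class by an embedded $2$--sided surface (Lemma 3.6 of \cite{Hatcher07_Notes3Mfd}), perturb generically, apply Theorem \ref{Rc_thm} to get finite-time extinction, and use preservation of the homology class along the flow (Lemma \ref{lem:PreserveHomology}/Proposition \ref{limit_topology}) to conclude the class vanishes. Your opening coefficient reduction via Poincar\'e duality is a harmless elaboration the paper leaves implicit (and only the trivial direction $H_2(N,\Z)=0\Rightarrow H_2(N,\R)=0$ is actually needed unless you fall back on the mod $2$ statement).
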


Indeed, by Hodge theory and the Bochner formula, $H^1(N)$ vanishes, so the claim follows by Poincar\'e duality. Now, in the case $N$ doesn't have Ricci positive curvature, the avoidance principle doesn't give as strong information, and indeed it is easy to concoct examples where two disjoint flows may converge to each other as $t \to \infty$. For instance, let $N$ be a compact manifold for which $H_2(N,\Z)$ is nontrivial (so cannot have positive Ricci curvature by the above). By the direct method in a nontrivial class of $H_2(N,\Z)$, there is an area minimizing surface $\Sigma$, and by perturbing the metric to be bumpy, we may even suppose it is strictly area minimizing. By considering disjoint nearby surfaces (sa,y graphical over $\Sigma$), their flows must converge to $\Sigma$ in the long term.

With this in mind, we now move on to Theorem \ref{pw_thm}. Recall the definition of piecewise generic flow, Definition \ref{pwflow_def}, and Theorem \ref{smoothfate}. If $M_t$ goes extinct or all the minimal surfaces $\Sigma_j$ it subconverges to, defined as in the previous section, are stable minimal surfaces, then we are done. Otherwise, we apply isotopes to $M_{t}$ at a smooth time far along the flow, which ensures the perturbation must not converge to the same surfaces after restarting the flow:

\begin{lem}\label{perturb} Let $M$ be a $2$--sided properly embedded surface in a closed compact Riemannian $3$--manifold $(N^3,g)$, and suppose $M_t$ is an almost regular flow out of $M$ which can be approximated by generic flows (if not generic itself, the inner or outermost flow from $M$). Suppose $M_{t_i}$ converges to $m_1\Sigma_1+\cdots+m_k\Sigma_k$ as varifolds, and one of $\Sigma_j$ is unstable. Then, for sufficiently large $t_i$, we may isotope $M_{t_i}$ so that any almost regular flow starting from the perturbed surface can not enter the tubular neighborhood of that unstable $\Sigma_j$.
\end{lem}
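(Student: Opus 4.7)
The plan is to use the instability of $\Sigma_j$ to build a pair of outward--expanding barrier surfaces in a thin tubular neighborhood of $\Sigma_j$, and then combine an outward isotopy of $M_{t_i}$ with the avoidance principle (Theorem \ref{ambient_avoidance}) to keep the subsequent flow from returning to $\Sigma_j$. The first step is to construct the barriers: since $\Sigma_j$ is unstable, its Jacobi operator $L_{\Sigma_j} = \Delta + |A|^2 + \Ric(\nu,\nu)$ has largest eigenvalue $\lambda_1 > 0$ with a strictly positive first eigenfunction $\phi_1$; in the one--sided case one lifts to the orientation double cover following Section 6 of \cite{StevensSun24_LargeArea}. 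Setting $\Sigma_j^s := \{\exp_p(s\phi_1(p)\nu(p)) : p \in \Sigma_j\}$, a direct linearization gives that the mean curvature vector of $\Sigma_j^s$ is $s\lambda_1\phi_1 \nu + O(s^2)$, so for $s_0>0$ small enough the surfaces $\Sigma_j^{\pm s_0}$ are smooth closed and have mean curvature vectors pointing strictly out of the slab $U := \Sigma_j^{(-s_0, s_0)}$; in particular the classical mean curvature flows of $\Sigma_j^{\pm s_0}$ initially move into $N\setminus U$.

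Once the barriers exist, Theorem \ref{ambient_avoidance} implies that any weak set flow whose initial data lies in $N\setminus \overline{U}$ remains disjoint from the evolving barrier surfaces and therefore stays outside $\overline{U}$ for as long as it exists; such a flow can never subconverge as a varifold to any portion of $\Sigma_j$. It therefore suffices to isotope $M_{t_i}$ to a surface $\tilde{M}_{t_i}$ whose almost regular flow lies in $N\setminus \overline{U}$ at some time arbitrarily close to (but possibly slightly after) $t_i$.

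To arrange this I would invoke Lemma \ref{lem:LimNbhd} together with Ilmanen's regularity Theorem \ref{thm:IlReg}: for $t_i$ large, $M_{t_i}$ lies in an arbitrarily thin tubular neighborhood of $\bigcup_\ell \Sigma_\ell$ and, away from finitely many small balls $B_r(p_{j,\ell})$ around bridge points on $\Sigma_j$, decomposes near $\Sigma_j$ into disjoint graphical components over $\Sigma_j$ each of multiplicity $1$ or $2$. Using the foliation $\Sigma_j^s$ as a coordinate chart, I would isotope each such component outward past $\Sigma_j^{\pm 2s_0}$ on the appropriate side of $\Sigma_j$ and elongate any bridging neck that must cross $\Sigma_j$ into a very thin tube, calling the result $\tilde{M}_{t_i}$. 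Under the subsequent mean curvature flow these thin tubes immediately undergo cylindrical neckpinch singularities, and after a brief transient the flow decomposes at a smooth time into disjoint pieces each lying in $N\setminus \overline{U}$, to which the barrier and avoidance argument above applies.

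The main technical obstacle is controlling the post--neckpinch behavior when a bridge joins sheets on opposite sides of a separating $\Sigma_j$, since such a bridge cannot be removed by isotopy alone and its thin tube necessarily crosses $U$. One must verify that the cap--shaped transient pieces produced by the neckpinch retract clear of $\overline{U}$ in a controlled time before they can interfere with the barrier argument. I would address this by stretching the necks long and thin enough that the neckpinch is modeled, up to parabolic rescaling, on a standard shrinking round cylinder, so that the resulting caps shrink in time $O(\epsilon^2)$ where $\epsilon$ is the neck radius, while for $\epsilon$ sufficiently small the barrier flows $\Sigma_j^{\pm s_0}(t)$ will not have moved appreciably; this may be packaged cleanly by working with the surgery approximation of Proposition \ref{goodflow_ambient} in which such necks are replaced by standard caps before the outward isotopy is performed. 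Combining this transient analysis with the barrier argument of the second paragraph then yields the desired conclusion that no almost regular flow starting from $\tilde{M}_{t_i}$ can enter a sufficiently small tubular neighborhood of $\Sigma_j$.
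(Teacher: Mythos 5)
Your barrier construction and the use of Theorem \ref{ambient_avoidance} are essentially the paper's mechanism: the graphs of $\pm s_0\varphi_1$ over the unstable $\Sigma_j$ are strictly mean convex with mean curvature pointing out of the slab, their flows move away from $\Sigma_j$, and any weak set flow starting outside the closed slab can never enter it. The gap is in the perturbation step. You isotope the graphical sheets ``outward past $\Sigma_j^{\pm 2s_0}$ on the appropriate side,'' i.e.\ to opposite sides of $\Sigma_j$, and therefore any bridge/neck joining sheets on opposite sides must still cross the slab $U$. But then your perturbed surface itself intersects the tubular neighborhood of $\Sigma_j$, so the conclusion of the lemma --- that \emph{any} almost regular flow starting from the perturbed surface cannot enter that neighborhood --- fails at time zero, and the avoidance principle cannot even be invoked initially because the flow and the barriers intersect. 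Your proposed repairs do not close this: the ``transient'' neckpinch analysis is only sketched (you would need to prove that at some smooth time the entire flow, including whatever remains of the necks and caps, lies outside $\overline{U}$, and only from that time on could you re-erect the static barriers), and the alternative of performing surgery on the necks before perturbing is not available, since the lemma (and Definition \ref{pwflow_def}) requires the perturbed surface to be \emph{isotopic} to $M_{t_i}$, while cutting necks and capping is not an isotopy.

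The paper sidesteps all of this with a simpler move that your decomposition misses: instead of pushing different sheets to different sides, it pushes everything to \emph{one} side. Using the coordinates $(x,s)\mapsto \exp_x(s\varphi(x)\nu(x))$ on a tubular neighborhood $\cN_\tau$ of $\Sigma_j$, one takes the vector field $\partial_s$ on $\cN_{\tau/2}$, cut off to zero outside $\cN_\tau$, and lets $\Psi(w)$ be the ambient isotopy it generates. By Lemma \ref{lem:LimNbhd} one may assume $M_{t_i}\subset\bigcup_\ell\cN_\delta(\Sigma_\ell)$ with $\delta$ small, and then $\Psi(\tau/2)M_{t_i}$ is an embedded surface, isotopic to $M_{t_i}$, lying entirely outside $\cN_\delta(\Sigma_j)$ and outside the slab bounded by the two eigenfunction graphs: because the push is an ambient diffeomorphism in the single direction $\partial_s$, the bridges are simply carried along with the sheets and never have to cross the forbidden region, embeddedness is automatic, and no neckpinch or transient analysis is needed. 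The barrier-plus-avoidance argument (iterated over short time intervals, as in the paper, to handle the weak flows) then applies from time $t_i$ onward. In the one-sided case the same argument runs in the neighborhood parametrized via the oriented double cover. So the key missing idea in your write-up is the one-directional ambient isotopy generated by the eigenfunction foliation; with it, the obstacle you spend your last paragraph fighting does not arise.
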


\begin{proof} Suppose $\Sigma_1$ is an unstable minimal surface, and in the following, we simply call it $\Sigma$. We first consider the situation that $\Sigma$ is unstable and $2$-sided. Then we can find a unit normal vector field $\nu$ over it, and we may assume $\varphi$ is the first eigenfunction of the linearized operator $L_\Sigma$. Let us use $(x,s)\in\Sigma\times(-\tau,\tau)$ to parametrize a tubular neighborhood $\cN_{\tau}$ of $\Sigma$ by $(x,s)\to\exp_x(s\varphi(x)\nu(x))$. We first define an isotope $\Psi_i$ of the ambient manifold as follows: we define the vector field $\pr_s$ in $\cN_{\tau/2}$ using the coordinate $(x,s)$, then we smoothly extend it to $0$ outside $\cN_{\tau}$ to get a smooth vector field $W$. Then we define $\{\Psi(w)\}_{w\in\R}$ to be the isotopy generated by the vector field $W$.

Let us choose a very small $\delta>0$, such that $\cN_\delta(\Sigma)$ is completely contained in the domain given by the parametrization $(x,s)\to\exp_x(s\varphi(x)\nu(x))$ for $s\in(-\tau/10,\tau/10)$, and $\Psi(\tau/2)\cN_\delta(\Sigma)$ will completely leave $\cN_\delta(\Sigma)$. Following Lemma \ref{lem:LimNbhd}, we may assume $M_{t_i}\subset \bigcup_{j=1}^k\cN_{\delta}(\Sigma_j)$ for sufficiently large $t_i$. Then $\tilde M_{t_i}:=\Psi(\tau/2)M_{t_i}$ must be completely disjoint from $\cN_\delta(\Sigma)$, and this is a $2$--sided flow from which we may safely restart the flow from. Recalling that the supports of Brakke flows are weak set flows. It's easy to see that any Brakke (and hence almost regular) flow starting from such $M' := \tilde M_{t_i}$ will never enter $\cN_{\delta}(\Sigma)$ again, by the avoidance principle. To see this, note that the domain generated by the parametrization $(x,s)\to\exp_x(s\varphi(x)\nu(x))$ for $s\in(-\tau/10,\tau/10)$ has compact, smooth and outwardly mean convex boundary, with two components, which we denote by $\Sigma_{\pm \tau/10}$. By the strict mean convexity and the compactness there exists $\delta, \eta > 0$ so that the smooth mean curvature flow of the boundary exists on $[0, \delta)$ and the distance between $\Sigma_{\pm \tau/10}$ and $(\Sigma'_{\pm \tau/10})_\delta$ is bounded below by $\eta$. The avoidance principle says $(\Sigma_{\pm \tau/10})_t$ and $M'_t$ are disjoint on $[0, \delta)$, and because $\eta > 0$ we can iterate this argument over the intervals $[\delta, 2\delta)$ et. cetera to get the claim.  

Finally, we briefly describe the situation that $\Sigma$ is unstable and $1$--sided. The argument is similar, but now the tubular neighborhood of $\Sigma$ is only parametrized by $\overline{\Sigma}\times(0,\tau)\bigcup \Sigma$, where $\overline{\Sigma}$ is the oriented double cover of $\Sigma$. In this case, we can still construct the isotopy to push the flow outside this tubular neighborhood, and verbatim arguments validate all the previous arguments.
\end{proof}

With this in hand, we are now ready to prove the main result of this section:

\begin{proof}[proof of Theorem \ref{pw_thm}] 
We may clearly restart the flow after the perturbation in Lemma \ref{perturb}, but a priori the set of times for which we perturb may accumulate to a finite time; we claim that only finitely many perturbations are required, though, to ensure that afterwards none of the $\Sigma_j$ could be unstable. Suppose $\Sigma$ is an unstable minimal surface that the flow was perturbed to avoid by the lemma; then if the flow $M_t$ starting from this perturbation converges to another unstable minimal surface it must converge to one which is disjoint from $\Sigma$ and hence disjoint from the component of the flow which was just perturbed -- but by the avoidance principle argument above the flow of this connected component may never converge to any such unstable minimal surface. 

Recall that the clearing out lemma \cite[12.2]{Ilmanen94_EllipReg} asserts that if the area of a flow $M_{t_0}$ inside $B_R(x)$ is sufficiently small, then after a short time, $M_{t_0+cR^2}$ will be disjoint from $B_{R/2}(x)$. Now, because we can take the parameter $\delta$ in the proof above as small as we wish, we may suppose the area of the piecewise mean curvature flow is bounded by, say, twice the area of $M$ after the $k$-th perturbation/iteration of Lemma \ref{perturb} for any $k \in \mathbb{N}$. In particular, by the clearing out lemma, only finitely many components of $M_t$ at a given smooth time might go on to flow to a minimal surface, and the rest of those with small area must quickly be exhausted. By this, we may crudely uniformly upper bound (independent of time) the number of such components. Alternatively, one may show there is a lower area bound on minimal surfaces of $N$ with bounded genus using \cite{White18_CptMinSurf3-Mfd} and use this. Applying the reasoning above to each such component then gives what we want.\end{proof}

\section{Applications to existence questions for minimal surfaces: proofs of corollaries \ref{existence1}, \ref{existence2}, \ref{existence3}, and \ref{existence5}}\label{applications}

In this section (aside from the last application, where we use Theorem \ref{longterm}), we consider various circumstances where we can show that the piecewise almost regular flows from the last section will be nonempty as $t \to \infty$ and say something about the topology of the limit. The first is on the existence of stable minimal spheres/projective planes when $\pi_2(N) \neq 0$.

\begin{proof}[proof of Corollary \ref{existence1}] First we note that by the sphere theorem in $3$--manifold topology \cite{Papakyriakopoulos57_Dehn} we can find, using $N$ is orientable, a homotopically nontrivial embedded $2$--sphere $M$. Of course, $M$ is itself orientable, and we may use it as initial data in Theorem \ref{pw_thm}. By considering the approximating surgery flows $M_t$ during smooth times will consist of a collection (possibly multiple, if there are nontrivial neckpinches) of embedded $2$--spheres. Now, either the flow from the conclusion will terminate in finite time or converge to a stable minimal surface $\Sigma$ which, by the nature of the convergence discussed in Theorem \ref{smoothfate} and the proof of Proposition \ref{limit_topology}, must be a collection of embedded stable minimal spheres or projective planes because each component for large (smooth) times is covered $k \geq 1$ times by $M_t $ for large $t$, away from mentioned bridge points and each component of this is an embedded $2$--sphere. Picking any of these components then gives the statement.

So, it suffices to rule out the flow going extinct in finite time. Supposing this is actually the case, considering the approximating surgery flow, one can show that $M$ is isotopic to a "marble" tree as in \cite{BuzanoHaslhoferHershkovits21_Moduli} or similarly \cite{Mramor18_Finiteness}, taking some care because in our setting the flow may not be mean convex even if the high curvature regions are. Because in the mean curvature flow with surgery for large parameters, the high curvature regions all bound handlebodies, one can show such marble trees bound $3$--balls in $N$, giving that $M$ is in fact homotopically trivial, leading to a contradiction.\end{proof}

We remind the reader that the next corollary is on the existence of stable minimal tori in the aspherical case. Corollary \ref{existence2} in fact follows from Corollary \ref{existence3}: it isn't hard to see by Dehn's lemma and the trivial $\pi_2$ assumption that if the torus $T$ isn't algebraically incompressible, then it bounds a $3$--cycle. That being said, we give the proof below for the sake of exposition, because it is somewhat simpler than the more general statement.

\begin{proof}[proof of Corollary \ref{existence2}] 
Denote by $M$ the torus in the statement. Because the torus is orientable and $N$ is assumed oriented, we have that $M$ is $2$--sided, so that we may apply Theorem \ref{pw_thm} to it. First, we argue that the flow must not go extinct. Supposing it does,  we note that approximating surgery flows go extinct as well. Considering one of these and denoting it by $S_t$, we note that high curvature regions which are discarded certainly bound $3$--cycles, homeomorphic to either $3$--balls or solid tori $\simeq S^1 \times D^2$, since different time slices along a smooth mean curvature flow are homologous this gives that $M$ is homologous to $S_t$ for all $t > 0$ and hence bounds a $3$--cycle, contradicting that $M$ is assumed to be homologically nontrivial. 

Next, we note that for any finite smooth time $t$, $M_t$ cannot be given by a union of embedded spheres $P_1, \ldots, P_k$. To see this, because $\pi_2(N)$ is trivial, each of the $P_i$ is nullhomotopic, and hence is the boundary of a $3$--cycle, so since $M_t$ is homologous to $M$, this gives that $M$ is homologically trivial, giving a contradiction. In particular, a connected component of $M_t$ at a smooth time must be homeomorphic to a torus. Now, because the flow doesn't go extinct as before we may produce a sequence of times $t_i \to \infty$ for which $M_{t_i}$ converge to an embedded stable minimal surface $\Sigma$, and we claim one of the connected components of $\Sigma$ is a torus or a Klein bottle (which is the nonorientable surface covered by the torus) -- note this doesn't follow immediately from the case that a component of $M_t$ is a torus. For example, one can take a standardly embedded torus in $\R^3$, tighten the "donut hole" and then push the sides of the inner cylinder along the outer sides of the donut so in this fashion covering the sphere twice, away from bridge points. Of course, if one of the components is a torus or Klein bottle, we are done, so it suffices to rule out the case that none of them are. By Proposition \ref{limit_topology}, in this case, all the components of $\Sigma$ are spheres or projective planes. 

Suppose first all the components are spheres: then by assumption they are nullhomotopic, and so each of these spheres could then be homotoped to be within coordinate patches homeomorphic to open balls. As these patches are homeomorphic to an open ball in $\R^3$, all surfaces contained within are nullhomologous. Using that homotopic surfaces are homologous as before, this implies $M$ itself is nullhomologous, giving a contradiction. If some of the components are projective planes, we can argue similarly, because the boundary of their tubular neighborhoods is diffeomorphic to spheres. 
\end{proof}

As the reader recalls from the introduction, the next statement concerns the existence of stable minimal surfaces in the presence of algebraically incompressible initial data, which is to say the inclusion map on fundamental groups injects; by Dehn's lemma, algebraically incompressible surfaces are incompressible in the standard sense, i.e., that there are no compressing discs in $N$ along $M$. This topological assumption is arguably most naturally suited to how the topology of a flow may change across singularities, at least generic ones. This is because the effect of flowing through a neckpinch is essentially to glue a thickened disc along the flow of the corresponding domain, and so if the surface is incompressible, only spheres can be pinched off.

\begin{proof}[proof of Corollary \ref{existence3}] 
Supposing that $M = \iota(\Sigma)$ is as in the statement, we consider, as before, the approximating surgery flow $S_t$ to $M_t$. We first claim that at any smooth time, a given connected component of it is either homeomorphic to either a sphere or $M$. Supposing not, and let $T$ be the smallest time, necessarily right after a surgery, that this isn't the case -- for a fixed choice of approximating surgery flow, this time is well defined. Then it's easy to see that the cross section curve $\gamma$ at one of the necks where surgery was performed on $S_T^{-}$ (the presurgery surface) is homotopically nontrivial in $M_T$. The surgery shows that $\gamma$ bounds a disc in $N$ though, so that $S_T^{-}$ is not algebraically incompressible. Since $T$ was the least such time, this implies that $M$ itself is not algebraically incompressible, which gives a contradiction.

Although it is the case that the flow preserves the topology of $M$ in this sense, it could be the case that there is strict genus drop in the limit as $t \to \infty$. In the following we focus only on the connected component $\tilde{M_t}$ of $M_t$ homeomorphic to $M$, and denote its corresponding limiting stable minimal surface $\Sigma$; the reason this suffices for us because, by the $\pi_2$ triviality assumption and that the other components pinched off the flow will be spheres, at any given time $M_t$ will be homotopic to just this component. Now, if the convergence is with multiplicity one, then it will be the case that $\Sigma$ is orientable and isotopic to $M$. Supposing this is not the case, then from the proof of Theorem \ref{pw_thm} we have that either $\Sigma$ is nonorientable and $M_t$ converges to $\Sigma$ with multiplicity $2$, or $M_t \to \Sigma$ with some multiplicity $m$ and there is at least one bridge point $p$, within which for some large time $s$ and some small $r > 0$ we have $M_s \cap B(p,r)$ has less than $m$ components -- our task is to rule out this latter case. Taking $r$ slightly larger if necessary $M \cap (B(p,2r) \setminus B(p, r))$ is comprised of $m$ disjoint annuli, where $m$ is the multiplicity of convergence; after a slight isotopy we may suppose these annuli $A_i$ lay in parallel planes and so may order them from "top" to "bottom" in the obvious sense, with $A_1$ being the top annulus. Without loss of generality, the top annulus at the point $p$ is nontrivial in the sense it doesn't border a disc in $M_s \cap B(p,r)$; if not below we can consider the next lowest disc and so on.

Considering one of the boundary components $\gamma$ of $A_1$, we may clearly find an embedded disc $D$ in $N$ disjoint from $M_ s$ with boundary $\gamma$. If $\gamma$ is nonseparating or splits $M_s$ into two components of genus greater than zero then $D$ will be a compressing disc because $\gamma$ will be homotopically nontrivial in $M$, which gives a contradiction. If the number of bridge points connected to the top sheet (i.e. where $B(p_i, r_i)$ are the bridge points and neighborhoods, the connected component of $M \setminus \cup_i B(p_i, r_i)$ containing $A_1$) is greater than one then $\gamma$ is nonseparating, so that we are done; let us suppose that $p$ is the only bridge point where the top sheet connects to the lower sheets then. If $\Sigma$ is not an embedded sphere or projective plane, then $\gamma$ would disconnect $M$ into two components of nonzero genus, completing the argument in this case. If $\Sigma$ is a sphere or projective plane, a bit more arguing is required. As before, first suppose that $\Sigma$ is a sphere. Then, because $N$ has trivial $\pi_2$, we may homotope $\Sigma$ and hence $M_s$ to lie in a small coordinate chart; of course, because $\R^3$ is contractible, $M_s$ in this case cannot be incompressible. If it is a projective plane again, the boundary of its tubular neighborhood is a sphere, and we may proceed as in the first case. 
\end{proof}

Leaving the world of stable minimal surfaces, we now consider the use of our first convergence statement, Theorem \ref{longterm}, to essentially replace the tightening process in minmax. Recalling that in the setting of Corollary \ref{existence5} setting $N \simeq S^3$ we may consider the standard one parameter sweepout $\Sigma_s$ of it by $2$--spheres, where there are two singular values of the sweepout that are single points corresponding to the north and south poles under a fixed choice of diffeomorphism from $N$ to $S^3$. Perhaps the most natural thing to do then is to consider the flow of the sweepout under the flow, by which we mean the family of the flows $(\Sigma_s)_t$ of the individual leaves $\Sigma_s$. Recalling that the width of any sweepout of $S^3$ by spheres is positive by the isoperimetric inequality, we see if we could show that if the flow will remain a sweepout, then Theorem \ref{longterm} says that the flow of at least one leaf will not go extinct, and because $S^3$ is simply connected and embedded hypersurfaces in such spaces are orientable Proposition \ref{limit_topology} gives the limit will be an embedded minimal sphere, as usual potentially with multiplicity. There are some issues with the approach though, unfortunately. 

The biggest issue in general appears to be that the level set flow of some of the leaves may fatten, but in our particular setting, this may be ruled out as we discuss below. There is also an issue with preserving the typical regularity assumptions one makes in the definition of sweepout. Recall that a sweepout of $N$ (here, $1$--parameter) is a family of surfaces $\Sigma_s$, $s \in [0,1]$, which for finitely many $s$ is singular in a finite set of points and smooth elsewhere and whose union is all of $N$. Now, we know that for a fixed leaf $\Sigma_s$ the flow $(\Sigma_s)_t$ will be smooth for almost all times $t$, but because we wish to consider the flow of the family, parameterized over the whole unit interval (which is uncountable so Baire category theorem cannot apply), it doesn't seem clear if we can show the flow of the whole family will be smooth for almost all or even many times. The current state of the art also doesn't rule out the size of the singular set of a flow being greater than a finite union of points, either.

\begin{proof}[proof of Corollary \ref{existence5}] 
With respect to the discussion above, one may be able to proceed by generalizing the results of minmax theory to more general sweepouts, including those "generated" by the flow, but instead, we will take a somewhat more hands-on approach. Denote by $E(s)$ the extinction time of the flow of $\Sigma_s$; if this never occurs, of course, we set $E(s) = \infty$ so it is a function into the extended reals.  We next claim that $E(s)$ is sequentially continuous. To see this, we first note that because each of the $\Sigma_s$ is a sphere, their singularities under the flow must be mean convex and so each of their flows is nonfattening -- implying, for instance, that the almost regular flows $(\Sigma_{s_i})_t $ are well defined. Fixing an $s_0 \in [0,1]$, nonfattening also implies that for a sequence $s_i \to s_0$, after potentially passing to a subsequence by Brakke compactness, $\lim\limits_{s_j \to s_0} (\Sigma_{s_i})_t \to (\Sigma_{s_0})_t$ as Brakke flows because $\lim\limits_{s_j \to s_0} (\Sigma_{s_i})_t $ is a weak flow associated to $\Sigma_{s_0}$. With this in mind, suppose then that there was a sequence $s_i \to s_0$ and $\epsilon > 0$ for which $E(s_0) < A - \epsilon$, where $A = \lim\limits_{i \to \infty} E(s_i)$. 

At time $T = E(s_0)$, by Colding and Minicozzi \cite{ColdingMinicozzi16_SingularSet} $(\Sigma_{s_0})_T$ is the union of a finite number of compact embedded Lipschitz curves corresponding to cylindrical singularities along with a countable collection of points corresponding to spherical singularities. This gives that for $i$ sufficiently large we make take the mass of $(\Sigma_{s_i})_T$ to be as small as we like, so in particular the clearing out lemma says these flows must go extinct before $t = A - \epsilon/2$, giving a contradiction to show that $E(s)$ is upper semicontinuous. Similarly, $E(s)$ is lower semicontinuous, giving our claim of sequential continuity.

Because $E(s)$ is sequentially continuous, by the compactness of the unit interval, $\sup\limits_{s \in [0,1]} E(s) = T$ is obtained for some time $s_0$, recycling notation. Now, as above, we have $(\Sigma_{s_0})_T$ must consist of the union of a finite number of compact embedded Lipschitz curves corresponding to cylindrical singularities, along with a countable collection of points corresponding to spherical singularities. Picking some smooth time $t^*$ right before $T$, the resolution of the mean convex neighborhood conjecture gives that $(\Sigma_{s_0})_{t_*}$ is mean convex and after rescaling is locally modeled on bowl solitons, round cylinders, or round spheres. 

To see why this is useful, first note that $s_0 \in (0,1)$: this is because for $s$ very near $0$ or $1$ that $\Sigma_s$ are approximately small concentric round spheres contained in a coordinate chart so that $\lim\limits_{s \to 0, 1} E(s) = 0$; on the other hand for a fixed smooth $\Sigma_s$ its smooth flow exists for a short but strictly positive time. Secondly, the flows $(\Sigma_{s})_t$ are pairwise disjoint in $s$ because $\Sigma_s$ are and almost regular flows are weak set flows. In particular, for $s - s_0$ sufficiently small $s$ Brakke regularity says that $(\Sigma_s)_{t^*}$ is a graph over $(\Sigma_{s_0})_{t_*}$ laying on one side or the other of it, and we can find $s$ so that the flow at time $t^*$ is on either side. Hence, we see that for $s$ either slightly greater or smaller than $s_0$, $(\Sigma_{s})_T$ must not be singular, or in other words, that their flow lasts at least slightly after time $T$, giving a contradiction to the definition of $T$. 
\end{proof}
\bibliographystyle{alpha}
\bibliography{GMT}

@article{AngenentIlmanenVelazquez17_fattening,
    author = {Angenent, S. B. and T. Ilmanen and Vel\'{a}zquez, J. J. L.},
    title = {Fattening from smooth initial data in mean curvature flow},
    journal = {Preprint},
    year = {2017},
    URL = {https://people.math.wisc.edu/~angenent/preprints/unfinished/fattening.pdf},
}

@book{Brakke78,
	url = {https://doi.org/10.1515/9781400867431},
	title = {The Motion of a Surface by Its Mean Curvature. (MN-20)},
	author = {Kenneth A. Brakke},
	publisher = {Princeton University Press},
	address = {Princeton},
	doi = {doi:10.1515/9781400867431},
	isbn = {9781400867431},
	year = {1978},
}

@article {Brendle16_genus0,
    AUTHOR = {Brendle, Simon},
     TITLE = {Embedded self-similar shrinkers of genus 0},
   JOURNAL = {Ann. of Math. (2)},
  FJOURNAL = {Annals of Mathematics. Second Series},
    VOLUME = {183},
      YEAR = {2016},
    NUMBER = {2},
     PAGES = {715--728},
      ISSN = {0003-486X,1939-8980},
   MRCLASS = {53C25 (53C44)},
  MRNUMBER = {3450486},
MRREVIEWER = {Casey\ Lynn\ Kelleher},
       DOI = {10.4007/annals.2016.183.2.6},
       URL = {https://doi.org/10.4007/annals.2016.183.2.6},
}

@article{ChenGigaGoto91_LSF,
	title={Uniqueness and existence of viscosity solutions of generalized mean curvature flow equations},
	author={Chen, Yun Gang and Giga, Yoshikazu and Goto, Shun'ichi},
	journal={Journal of differential geometry},
	volume={33},
	number={3},
	pages={749--786},
	year={1991},
	publisher={Lehigh University}
}

@article {CCMS20_GenericMCF,
    AUTHOR = {Chodosh, Otis and Choi, Kyeongsu and Mantoulidis, Christos and
              Schulze, Felix},
     TITLE = {Mean curvature flow with generic initial data},
   JOURNAL = {Invent. Math.},
  FJOURNAL = {Inventiones Mathematicae},
    VOLUME = {237},
      YEAR = {2024},
    NUMBER = {1},
     PAGES = {121--220},
      ISSN = {0020-9910,1432-1297},
   MRCLASS = {53E10 (35K93)},
  MRNUMBER = {4756990},
       DOI = {10.1007/s00222-024-01258-0},
       URL = {https://doi.org/10.1007/s00222-024-01258-0},
}

@article{chodoshchoischulze2023mean,
  title={Mean curvature flow with generic initial data II},
  author={Chodosh, Otis and Choi, Kyeongsu and Schulze, Felix},
  journal={arXiv preprint arXiv:2302.08409},
  year={2023}
}

@article {ChoiHaslhoferHershkovits18_MeanConvNeighb,
    AUTHOR = {Choi, Kyeongsu and Haslhofer, Robert and Hershkovits, Or},
     TITLE = {Ancient low-entropy flows, mean-convex neighborhoods, and
              uniqueness},
   JOURNAL = {Acta Math.},
  FJOURNAL = {Acta Mathematica},
    VOLUME = {228},
      YEAR = {2022},
    NUMBER = {2},
     PAGES = {217--301},
      ISSN = {0001-5962,1871-2509},
   MRCLASS = {53E10},
  MRNUMBER = {4448681},
MRREVIEWER = {Shu-Yu\ Hsu},
       DOI = {10.4310/acta.2022.v228.n2.a1},
       URL = {https://doi.org/10.4310/acta.2022.v228.n2.a1},
}

@article{ColdingMinicozzi12_generic,
	title={Generic mean curvature flow I; generic singularities},
	author={Colding, Tobias H and Minicozzi, William P},
	journal={Annals of mathematics},
	pages={755--833},
	year={2012},
	publisher={JSTOR}
}

@article{ColdingMinicozzi16_SingularSet,
  title={The singular set of mean curvature flow with generic singularities},
  author={Colding, Tobias Holck and Minicozzi, William P},
  journal={Inventiones mathematicae},
  volume={204},
  number={2},
  pages={443--471},
  year={2016},
  publisher={Springer}
}

@article{EckerHuisken89_EntireGraph,
	title={Mean curvature evolution of entire graphs},
	author={Ecker, Klaus and Huisken, Gerhard},
	journal={Annals of Mathematics},
	volume={130},
	number={3},
	pages={453--471},
	year={1989},
	publisher={JSTOR}
}

@article{EvansSpruck91,
	title={Motion of level sets by mean curvature. I},
	author={Evans, Lawrence C and Spruck, Joel},
	journal={Journal of Differential Geometry},
	volume={33},
	number={3},
	pages={635--681},
	year={1991},
	publisher={Lehigh University}
}

@article{HershkovitsWhite20_Nonfattening,
	title={Nonfattening of mean curvature flow at singularities of mean convex type},
	author={Hershkovits, Or and White, Brian},
	journal={Communications on Pure and Applied Mathematics},
	volume={73},
	number={3},
	pages={558--580},
	year={2020},
	publisher={Wiley Online Library}
}

@article{Ilmanen92_LSF,
	title={Generalized flow of sets by mean curvature on a manifold},
	author={Ilmanen, Tom},
	journal={Indiana University mathematics journal},
	pages={671--705},
	year={1992},
	publisher={JSTOR}
}

@book{Ilmanen94_EllipReg,
	title={Elliptic regularization and partial regularity for motion by mean curvature},
	author={Ilmanen, Tom},
	volume={520},
	year={1994},
	publisher={American Mathematical Soc.}
}

@article{Ilmanen95_Sing2D,
	title={Singularities of mean curvature flow of surfaces},
	author={Ilmanen, Tom},
	journal={preprint},
	year={1995}
}

@article{Ketover24_Fattening,
  title={Self-shrinkers whose asymptotic cones fatten},
  author={Ketover, Daniel},
  journal={arXiv preprint arXiv:2407.01240},
  year={2024}
}

@article{LeeZhao24_MCFconical,
  title={Closed mean curvature flows with asymptotically conical singularities},
  author={Lee, Tang-Kai and Zhao, Xinrui},
  journal={arXiv preprint arXiv:2405.15577},
  year={2024}
}

@Article{Simon83Asym,
	title = {Asymptotics for a {{Class}} of {{Non}}-{{Linear Evolution Equations}}, with {{Applications}} to {{Geometric Problems}}},
	author = {Simon, Leon},
	year = {1983},
	volume = {118},
	pages = {525--571},
	issn = {0003-486X},
	doi = {10.2307/2006981},
	file = {C\:\\Users\\phoen\\Google Drive\\Zotero\\storage\\84BHDT7I\\Simon - 1983 - Asymptotics for a Class of Non-Linear Evolution Equations, with Applications to.pdf},
	journal = {Annals of Mathematics},
	number = {3}
}

@article {sun-generic-multi-1,
    AUTHOR = {Sun, Ao},
     TITLE = {Local entropy and generic multiplicity one singularities of
              mean curvature flow of surfaces},
   JOURNAL = {J. Differential Geom.},
  FJOURNAL = {Journal of Differential Geometry},
    VOLUME = {124},
      YEAR = {2023},
    NUMBER = {1},
     PAGES = {169--198},
      ISSN = {0022-040X,1945-743X},
   MRCLASS = {53E10},
  MRNUMBER = {4593902},
       DOI = {10.4310/jdg/1685121322},
       URL = {https://doi.org/10.4310/jdg/1685121322},
}

@article{SunXue2021_initial_closed,
	title={Initial Perturbation of the Mean Curvature Flow for closed limit shrinker},
	author={Sun, Ao and Xue, Jinxin},
	journal={arXiv preprint arXiv:2104.03101},
	year={2021}
}

@article{SunXue2021_initial_conical,
	title={Initial Perturbation of the Mean Curvature Flow for Asymptotical Conical Limit Shrinker},
	author={Sun, Ao and Xue, Jinxin},
	journal={arXiv preprint arXiv:2107.05066},
	year={2021}
}

@ARTICLE{White91_Bumpy,
	author = {White, Brian},
	title = {The space of minimal submanifolds for varying {R}iemannian metrics},
	journal = {Indiana Univ. Math. J.},
	fjournal = {Indiana University Mathematics Journal},
	volume = {40},
	year = {1991},
	number = {1},
	pages = {161--200},
	issn = {0022-2518},
	mrclass = {58D10 (53C42)},
	mrnumber = {1101226},
	mrreviewer = {João Lucas Marques Barbosa},
	doi = {10.1512/iumj.1991.40.40008},
	zblnumber = {0742.58009},
}

@article{White95_WSF_Top,
	title={The topology of hypersurfaces moving by mean curvature},
	author={White, Brian},
	journal={Communications in analysis and geometry},
	volume={3},
	number={2},
	pages={317--333},
	year={1995},
	publisher={International Press of Boston}
}

@article{White97_Stratif,
	author = {White, Brian},
	journal = {Journal für die reine und angewandte Mathematik},
	keywords = {minimal surfaces; harmonic maps; stratification; mean curvature flows},
	pages = {1-36},
	title = {Stratification of minimal surfaces, mean curvature flows, and harmonic maps.},
	url = {http://eudml.org/doc/153922},
	volume = {488},
	year = {1997},
}

@inproceedings {White02ICM,
	AUTHOR = {White, Brian},
	TITLE = {Evolution of curves and surfaces by mean curvature},
	BOOKTITLE = {Proceedings of the {I}nternational {C}ongress of
		{M}athematicians, {V}ol. {I} ({B}eijing, 2002)},
	PAGES = {525--538},
	PUBLISHER = {Higher Ed. Press, Beijing},
	YEAR = {2002},
	MRCLASS = {53C44},
	MRNUMBER = {1989203},
	MRREVIEWER = {Tommaso Pacini},
}

@article{White09_CurrentsVarifolds,
	title={Currents and flat chains associated to varifolds, with an application to mean curvature flow},
	author={White, Brian},
	journal={Duke Mathematical Journal},
	volume={148},
	number={1},
	pages={41--62},
	year={2009},
	publisher={Duke University Press}
}

@ARTICLE{White17_Bumpy,
	author = {White, Brian},
	title = {On the bumpy metrics theorem for minimal submanifolds},
	journal = {Amer. J. Math.},
	fjournal = {American Journal of Mathematics},
	volume = {139},
	year = {2017},
	number = {4},
	pages = {1149--1155},
	issn = {0002-9327},
	mrclass = {53C42},
	mrnumber = {3689325},
	doi = {10.1353/ajm.2017.0029},
}

@article{Zhou19_multi1,
	AUTHOR = {Zhou, Xin},
	TITLE = {On the multiplicity one conjecture in min-max theory},
	JOURNAL = {Ann. of Math. (2)},
	FJOURNAL = {Annals of Mathematics. Second Series},
	VOLUME = {192},
	YEAR = {2020},
	NUMBER = {3},
	PAGES = {767--820},
	ISSN = {0003-486X},
	MRCLASS = {53C42 (49J35 49Q05 58E12)},
	MRNUMBER = {4172621},
	DOI = {10.4007/annals.2020.192.3.3},
	URL = {https://doi.org/10.4007/annals.2020.192.3.3},
	Zbl = {1461.53051},
}

@article {HaslhoferKleiner17_MCFsurgery,
    AUTHOR = {Haslhofer, Robert and Kleiner, Bruce},
     TITLE = {Mean curvature flow with surgery},
   JOURNAL = {Duke Math. J.},
  FJOURNAL = {Duke Mathematical Journal},
    VOLUME = {166},
      YEAR = {2017},
    NUMBER = {9},
     PAGES = {1591--1626},
      ISSN = {0012-7094,1547-7398},
   MRCLASS = {53C44 (35K93 53C21)},
  MRNUMBER = {3662439},
MRREVIEWER = {Valentina-Mira\ Wheeler},
       DOI = {10.1215/00127094-0000008X},
       URL = {https://doi.org/10.1215/00127094-0000008X},
}

@article {BrendleHuisken16_MCFSurgeryR3,
    AUTHOR = {Brendle, Simon and Huisken, Gerhard},
     TITLE = {Mean curvature flow with surgery of mean convex surfaces in
              {$\Bbb R^3$}},
   JOURNAL = {Invent. Math.},
  FJOURNAL = {Inventiones Mathematicae},
    VOLUME = {203},
      YEAR = {2016},
    NUMBER = {2},
     PAGES = {615--654},
      ISSN = {0020-9910,1432-1297},
   MRCLASS = {53C44 (53A10)},
  MRNUMBER = {3455158},
MRREVIEWER = {James\ Alexander\ McCoy},
       DOI = {10.1007/s00222-015-0599-3},
       URL = {https://doi.org/10.1007/s00222-015-0599-3},
}

@article {Daniels-Holgate22_SurgeryApprox,
    AUTHOR = {Daniels-Holgate, J. M.},
     TITLE = {Approximation of mean curvature flow with generic
              singularities by smooth flows with surgery},
   JOURNAL = {Adv. Math.},
  FJOURNAL = {Advances in Mathematics},
    VOLUME = {410},
      YEAR = {2022},
     PAGES = {Paper No. 108715, 42},
      ISSN = {0001-8708,1090-2082},
   MRCLASS = {35K93 (53E10)},
  MRNUMBER = {4493003},
       DOI = {10.1016/j.aim.2022.108715},
       URL = {https://doi.org/10.1016/j.aim.2022.108715},
}

@article {Grayson89_CSF,
    AUTHOR = {Grayson, Matthew A.},
     TITLE = {Shortening embedded curves},
   JOURNAL = {Ann. of Math. (2)},
  FJOURNAL = {Annals of Mathematics. Second Series},
    VOLUME = {129},
      YEAR = {1989},
    NUMBER = {1},
     PAGES = {71--111},
      ISSN = {0003-486X,1939-8980},
   MRCLASS = {53C22 (58E10)},
  MRNUMBER = {979601},
MRREVIEWER = {Gudlaugur\ Thorbergsson},
       DOI = {10.2307/1971486},
       URL = {https://doi.org/10.2307/1971486},
}

@article{BamlerKleiner23_Multiplicity1,
  title={On the multiplicity one conjecture for mean curvature flows of surfaces},
  author={Bamler, Richard H and Kleiner, Bruce},
  journal={arXiv preprint arXiv:2312.02106},
  year={2023}
}

@article{ChenSun24_Multi2inManifold,
  title={Mean curvature flow with multiplicity $2 $ convergence in closed manifolds},
  author={Chen, Jingwen and Sun, Ao},
  journal={arXiv preprint arXiv:2402.04521},
  year={2024}
}

@article {SchoenYau79_ExistenceIncompressibleMinSurf,
    AUTHOR = {Schoen, R. and Yau, Shing Tung},
     TITLE = {Existence of incompressible minimal surfaces and the topology
              of three-dimensional manifolds with nonnegative scalar
              curvature},
   JOURNAL = {Ann. of Math. (2)},
  FJOURNAL = {Annals of Mathematics. Second Series},
    VOLUME = {110},
      YEAR = {1979},
    NUMBER = {1},
     PAGES = {127--142},
      ISSN = {0003-486X},
   MRCLASS = {58E12 (49F10 53C42)},
  MRNUMBER = {541332},
MRREVIEWER = {Jonathan\ Sacks},
       DOI = {10.2307/1971247},
       URL = {https://doi.org/10.2307/1971247},
}

@article {MeeksSimonYau82_MinSurf,
    AUTHOR = {Meeks, III, William and Simon, Leon and Yau, Shing Tung},
     TITLE = {Embedded minimal surfaces, exotic spheres, and manifolds with
              positive {R}icci curvature},
   JOURNAL = {Ann. of Math. (2)},
  FJOURNAL = {Annals of Mathematics. Second Series},
    VOLUME = {116},
      YEAR = {1982},
    NUMBER = {3},
     PAGES = {621--659},
      ISSN = {0003-486X},
   MRCLASS = {53C42 (49F10 53A10)},
  MRNUMBER = {678484},
MRREVIEWER = {Jo\~ao\ Lucas Marques Barbosa},
       DOI = {10.2307/2007026},
       URL = {https://doi.org/10.2307/2007026},
}

@article {FreedmanHassScott83_LeastArea,
    AUTHOR = {Freedman, Michael and Hass, Joel and Scott, Peter},
     TITLE = {Least area incompressible surfaces in {$3$}-manifolds},
   JOURNAL = {Invent. Math.},
  FJOURNAL = {Inventiones Mathematicae},
    VOLUME = {71},
      YEAR = {1983},
    NUMBER = {3},
     PAGES = {609--642},
      ISSN = {0020-9910,1432-1297},
   MRCLASS = {57N10 (53A10)},
  MRNUMBER = {695910},
MRREVIEWER = {John\ Hempel},
       DOI = {10.1007/BF02095997},
       URL = {https://doi.org/10.1007/BF02095997},
}

@article {SacksUhlenbeck81_minimal2sphere,
    AUTHOR = {Sacks, J. and Uhlenbeck, K.},
     TITLE = {The existence of minimal immersions of {$2$}-spheres},
   JOURNAL = {Ann. of Math. (2)},
  FJOURNAL = {Annals of Mathematics. Second Series},
    VOLUME = {113},
      YEAR = {1981},
    NUMBER = {1},
     PAGES = {1--24},
      ISSN = {0003-486X},
   MRCLASS = {58E12 (53C42 58E20)},
  MRNUMBER = {604040},
MRREVIEWER = {John\ C.\ Wood},
       DOI = {10.2307/1971131},
       URL = {https://doi.org/10.2307/1971131},
}

@article {FedererFleming60_Current,
    AUTHOR = {Federer, Herbert and Fleming, Wendell H.},
     TITLE = {Normal and integral currents},
   JOURNAL = {Ann. of Math. (2)},
  FJOURNAL = {Annals of Mathematics. Second Series},
    VOLUME = {72},
      YEAR = {1960},
     PAGES = {458--520},
      ISSN = {0003-486X},
   MRCLASS = {28.80 (53.45)},
  MRNUMBER = {123260},
MRREVIEWER = {L.\ C.\ Young},
       DOI = {10.2307/1970227},
       URL = {https://doi.org/10.2307/1970227},
}

@article {White24_Avoidance,
    AUTHOR = {White, Brian},
     TITLE = {The avoidance principle for noncompact hypersurfaces moving by
              mean curvature flow},
   JOURNAL = {Calc. Var. Partial Differential Equations},
  FJOURNAL = {Calculus of Variations and Partial Differential Equations},
    VOLUME = {63},
      YEAR = {2024},
    NUMBER = {5},
     PAGES = {Paper No. 111, 20},
      ISSN = {0944-2669,1432-0835},
   MRCLASS = {53E10},
  MRNUMBER = {4737267},
       DOI = {10.1007/s00526-024-02725-5},
       URL = {https://doi.org/10.1007/s00526-024-02725-5},
}

@article{IlmanenWhite24_Fattening,
  title={Fattening in mean curvature flow},
  author={Ilmanen, Tom and White, Brian},
  journal={arXiv preprint arXiv:2406.18703},
  year={2024}
}

@article {Lauer13_MCFSurgery,
    AUTHOR = {Lauer, Joseph},
     TITLE = {Convergence of mean curvature flows with surgery},
   JOURNAL = {Comm. Anal. Geom.},
  FJOURNAL = {Communications in Analysis and Geometry},
    VOLUME = {21},
      YEAR = {2013},
    NUMBER = {2},
     PAGES = {355--363},
      ISSN = {1019-8385,1944-9992},
   MRCLASS = {53C44},
  MRNUMBER = {3043750},
MRREVIEWER = {Esther\ Cabezas Rivas},
       DOI = {10.4310/CAG.2013.v21.n2.a4},
       URL = {https://doi.org/10.4310/CAG.2013.v21.n2.a4},
}

@article {Head13_MCF2convex,
    AUTHOR = {Head, John},
     TITLE = {On the mean curvature evolution of two-convex hypersurfaces},
   JOURNAL = {J. Differential Geom.},
  FJOURNAL = {Journal of Differential Geometry},
    VOLUME = {94},
      YEAR = {2013},
    NUMBER = {2},
     PAGES = {241--266},
      ISSN = {0022-040X,1945-743X},
   MRCLASS = {53C44 (35K55 35K93 53C21)},
  MRNUMBER = {3080482},
MRREVIEWER = {Kin\ Ming\ Hui},
       URL = {http://projecteuclid.org/euclid.jdg/1367438649},
}

@article {HaslhoferKetover19_Min2Sphere,
    AUTHOR = {Haslhofer, Robert and Ketover, Daniel},
     TITLE = {Minimal 2-spheres in 3-spheres},
   JOURNAL = {Duke Math. J.},
  FJOURNAL = {Duke Mathematical Journal},
    VOLUME = {168},
      YEAR = {2019},
    NUMBER = {10},
     PAGES = {1929--1975},
      ISSN = {0012-7094,1547-7398},
   MRCLASS = {49Q05 (49J35 53E10 58E12)},
  MRNUMBER = {3983295},
MRREVIEWER = {Hung\ Thanh\ Tran},
       DOI = {10.1215/00127094-2019-0009},
       URL = {https://doi.org/10.1215/00127094-2019-0009},
}

@article {Simon94_Willmore,
    AUTHOR = {Simon, Leon},
     TITLE = {Existence of surfaces minimizing the {W}illmore functional},
   JOURNAL = {Comm. Anal. Geom.},
  FJOURNAL = {Communications in Analysis and Geometry},
    VOLUME = {1},
      YEAR = {1993},
    NUMBER = {2},
     PAGES = {281--326},
      ISSN = {1019-8385,1944-9992},
   MRCLASS = {58E12 (49Q10 53A10)},
  MRNUMBER = {1243525},
MRREVIEWER = {J.\ E.\ Brothers},
       DOI = {10.4310/CAG.1993.v1.n2.a4},
       URL = {https://doi.org/10.4310/CAG.1993.v1.n2.a4},
}

@article {StevensSun24_LargeArea,
    AUTHOR = {Stevens, James and Sun, Ao},
     TITLE = {Existence of minimal hypersurfaces with arbitrarily large area
              and possible obstructions},
   JOURNAL = {J. Funct. Anal.},
  FJOURNAL = {Journal of Functional Analysis},
    VOLUME = {287},
      YEAR = {2024},
    NUMBER = {6},
     PAGES = {Paper No. 110526, 40},
      ISSN = {0022-1236,1096-0783},
   MRCLASS = {53A10},
  MRNUMBER = {4755022},
       DOI = {10.1016/j.jfa.2024.110526},
       URL = {https://doi.org/10.1016/j.jfa.2024.110526},
}

@article {White18_CptMinSurf3-Mfd,
    AUTHOR = {White, Brian},
     TITLE = {On the compactness theorem for embedded minimal surfaces in
              3-manifolds with locally bounded area and genus},
   JOURNAL = {Comm. Anal. Geom.},
  FJOURNAL = {Communications in Analysis and Geometry},
    VOLUME = {26},
      YEAR = {2018},
    NUMBER = {3},
     PAGES = {659--678},
      ISSN = {1019-8385,1944-9992},
   MRCLASS = {53A10 (49Q05)},
  MRNUMBER = {3844118},
MRREVIEWER = {Hojoo\ Lee},
       DOI = {10.4310/CAG.2018.v26.n3.a7},
       URL = {https://doi.org/10.4310/CAG.2018.v26.n3.a7},
}

@article {Papakyriakopoulos57_Dehn,
    AUTHOR = {Papakyriakopoulos, C. D.},
     TITLE = {On {D}ehn's lemma and the asphericity of knots},
   JOURNAL = {Ann. of Math. (2)},
  FJOURNAL = {Annals of Mathematics. Second Series},
    VOLUME = {66},
      YEAR = {1957},
     PAGES = {1--26},
      ISSN = {0003-486X},
   MRCLASS = {55.0X},
  MRNUMBER = {90053},
MRREVIEWER = {R.\ H.\ Fox},
       DOI = {10.2307/1970113},
       URL = {https://doi.org/10.2307/1970113},
}

@article {Mramor18_Finiteness,
    AUTHOR = {Mramor, Alexander},
     TITLE = {A finiteness theorem via the mean curvature flow with surgery},
   JOURNAL = {J. Geom. Anal.},
  FJOURNAL = {Journal of Geometric Analysis},
    VOLUME = {28},
      YEAR = {2018},
    NUMBER = {4},
     PAGES = {3348--3372},
      ISSN = {1050-6926,1559-002X},
   MRCLASS = {53C44},
  MRNUMBER = {3881975},
MRREVIEWER = {Mijia\ Lai},
       DOI = {10.1007/s12220-017-9962-5},
       URL = {https://doi.org/10.1007/s12220-017-9962-5},
}

@article {AM_CMgenericambient,
    AUTHOR = {Mramor, Alexander},
     TITLE = {Entropy and generic mean curvature flow in curved ambient spaces},
   JOURNAL = {Proc. Amer. Math. Soc.},
  FJOURNAL = {Proceedings of American Mathematical Society},
    VOLUME = {146},
      YEAR = {2018},
    NUMBER = {6},
     PAGES = {2663-2677},
      ISSN = {1050-6926,1559-002X},
   MRCLASS = {53C44},
  MRNUMBER = {MR3778166},
MRREVIEWER = {Mijia\ Lai},
       DOI = {10.1007/s12220-017-9962-5},
       URL = {https://doi.org/10.1090/proc/13964},
}

@article {BuzanoHaslhoferHershkovits21_Moduli,
    AUTHOR = {Buzano, Reto and Haslhofer, Robert and Hershkovits, Or},
     TITLE = {The moduli space of two-convex embedded spheres},
   JOURNAL = {J. Differential Geom.},
  FJOURNAL = {Journal of Differential Geometry},
    VOLUME = {118},
      YEAR = {2021},
    NUMBER = {2},
     PAGES = {189--221},
      ISSN = {0022-040X,1945-743X},
   MRCLASS = {58D27 (53E10)},
  MRNUMBER = {4278693},
MRREVIEWER = {Yong\ Wei},
       DOI = {10.4310/jdg/1622743139},
       URL = {https://doi.org/10.4310/jdg/1622743139},
}

@misc{Hatcher07_Notes3Mfd,
  title={Notes on basic 3-manifold topology},
  author={Hatcher, Allen},
  year={2007},
  url={https://pi.math.cornell.edu/~hatcher/3M/3M.pdf},
}

@incollection {ColdingDeLellis03_minmax,
    AUTHOR = {Colding, Tobias H. and De Lellis, Camillo},
     TITLE = {The min-max construction of minimal surfaces},
 BOOKTITLE = {Surveys in differential geometry, {V}ol.\ {VIII} ({B}oston,
              {MA}, 2002)},
    SERIES = {Surv. Differ. Geom.},
    VOLUME = {8},
     PAGES = {75--107},
 PUBLISHER = {Int. Press, Somerville, MA},
      YEAR = {2003},
      ISBN = {1-57146-114-0},
   MRCLASS = {53A10 (49Q05 53C42)},
  MRNUMBER = {2039986},
MRREVIEWER = {Fei-Tsen\ Liang},
       DOI = {10.4310/SDG.2003.v8.n1.a3},
       URL = {https://doi.org/10.4310/SDG.2003.v8.n1.a3},
}

@article{WangZhou23_4Sphere,
  title={Existence of four minimal spheres in $ {S}^3$ with a bumpy metric},
  author={Wang, Zhichao and Zhou, Xin},
  journal={arXiv preprint arXiv:2305.08755},
  year={2023}
}

@article {MeeksYau80_Top3d,
    AUTHOR = {Meeks, III, William H. and Yau, Shing Tung},
     TITLE = {Topology of three-dimensional manifolds and the embedding
              problems in minimal surface theory},
   JOURNAL = {Ann. of Math. (2)},
  FJOURNAL = {Annals of Mathematics. Second Series},
    VOLUME = {112},
      YEAR = {1980},
    NUMBER = {3},
     PAGES = {441--484},
      ISSN = {0003-486X},
   MRCLASS = {53C42 (49F10 57M35)},
  MRNUMBER = {595203},
MRREVIEWER = {F.\ J.\ Almgren, Jr.},
       DOI = {10.2307/1971088},
       URL = {https://doi.org/10.2307/1971088},
}

@article {DunfieldTHurston06_Random3Mfd,
    AUTHOR = {Dunfield, Nathan M. and Thurston, William P.},
     TITLE = {Finite covers of random 3-manifolds},
   JOURNAL = {Invent. Math.},
  FJOURNAL = {Inventiones Mathematicae},
    VOLUME = {166},
      YEAR = {2006},
    NUMBER = {3},
     PAGES = {457--521},
      ISSN = {0020-9910,1432-1297},
   MRCLASS = {57N10 (20F34 20F65 57M10 57M50)},
  MRNUMBER = {2257389},
MRREVIEWER = {Bruno\ P.\ Zimmermann},
       DOI = {10.1007/s00222-006-0001-6},
       URL = {https://doi.org/10.1007/s00222-006-0001-6},
}

@article {Urbano13_one-sided,
    AUTHOR = {Urbano, Francisco},
     TITLE = {Second variation of one-sided complete minimal surfaces},
   JOURNAL = {Rev. Mat. Iberoam.},
  FJOURNAL = {Revista Matem\'atica Iberoamericana},
    VOLUME = {29},
      YEAR = {2013},
    NUMBER = {2},
     PAGES = {479--494},
      ISSN = {0213-2230,2235-0616},
   MRCLASS = {53A10 (53C42)},
  MRNUMBER = {3047425},
MRREVIEWER = {Makoto\ Sakaki},
       DOI = {10.4171/RMI/727},
       URL = {https://doi.org/10.4171/RMI/727},
}

@article {HershkovitsWhite23_Avoid,
    AUTHOR = {Hershkovits, Or and White, Brian},
     TITLE = {Avoidance for set-theoretic solutions of mean-curvature-type
              flows},
   JOURNAL = {Comm. Anal. Geom.},
  FJOURNAL = {Communications in Analysis and Geometry},
    VOLUME = {31},
      YEAR = {2023},
    NUMBER = {1},
     PAGES = {31--67},
      ISSN = {1019-8385,1944-9992},
   MRCLASS = {53E10},
  MRNUMBER = {4652509},
}

@article {Maher10_RandomHeegaard,
    AUTHOR = {Maher, Joseph},
     TITLE = {Random {H}eegaard splittings},
   JOURNAL = {J. Topol.},
  FJOURNAL = {Journal of Topology},
    VOLUME = {3},
      YEAR = {2010},
    NUMBER = {4},
     PAGES = {997--1025},
      ISSN = {1753-8416,1753-8424},
   MRCLASS = {37E30 (20F65 57M50 57N10 60G50)},
  MRNUMBER = {2746344},
MRREVIEWER = {Georgios\ Tsapogas},
       DOI = {10.1112/jtopol/jtq031},
       URL = {https://doi.org/10.1112/jtopol/jtq031},
}

\end{document}